\documentclass[12pt]{amsart}
\usepackage{amsmath}
\usepackage{amsthm}
\usepackage{mathtools}
\usepackage{amsfonts}
\usepackage{amssymb}
\usepackage{mathrsfs}
\usepackage{hyperref}

\usepackage{anysize}
\marginsize{1in}{1in}{1.3in}{1in}

\linespread{1.2}

\usepackage{enumerate}


\numberwithin{equation}{section}
\newtheorem{theorem}{Theorem}[section]
\newtheorem{corollary}[theorem]{Corollary}
\newtheorem{lemma}[theorem]{Lemma}
\newtheorem{problem}[theorem]{Problem}
\newtheorem{proposition}[theorem]{Proposition}
\newtheorem{question}[theorem]{Question}
\newtheorem{remark}[theorem]{Remark}

\theoremstyle{definition}
\newtheorem{definition}[theorem]{Definition}
\newtheorem{notation}[theorem]{Notation}
\newtheorem{example}[theorem]{Example}

\makeatletter\renewenvironment{proof}[1][\proofname] {\par\pushQED{\qed}\normalfont\topsep6\p@\@plus6\p@\relax\trivlist\item[\hskip\labelsep\bfseries#1\@addpunct{.}]\ignorespaces}{\popQED\endtrivlist}

\def\A{\mathcal{A}}
\def\a{\mathbf a}
\def\al{\alpha}
\def\BC{\mathcal{B}}

\def\be{\beta}
\def\C{\mathbb{C}}
\def\D{\mathbf D}
\def\diag{\textnormal{diag}}
\def\e{\textnormal{e}}
\def\em{\varnothing}
\def\F{\mathbb F}
\def\g{\mathbf g}
\def\la{\lambda}
\def\N{\mathbb{N}}
\def\O{\mathcal O}
\def\R{\mathbb{R}}

\def\U{\mathscr{U}}

\def\Var{\textnormal{\bf Var}}
\def\Z{\mathbb{Z}}

\renewcommand\bar{\overline}

\renewcommand{\i}{\textnormal{i}}
\renewcommand\Re{\textnormal{Re}}
\renewcommand\phi{\varphi}


\begin{document}
\title{$*$-Freeness in Finite Tensor Products}

\author{Benoit Collins}
\address{Department of Mathematics,
Graduate School of Science,
Kyoto University,
Kyoto 606-8502,
Japan}
\email{collins@math.kyoto-u.ac.jp}

\author{Pierre Yves Gaudreau Lamarre}
\address{Department of Operations Research and Financial Engineering,
Princeton University,
Sherrerd Hall, Charlton Street
Princeton, NJ 08544}
\email{plamarre@princeton.edu}

\begin{abstract}
In this paper, we consider the following question and variants thereof: 
given $\D:=\big(a_{1;i}\otimes\cdots\otimes a_{K;i}:i\in I\big)$,
a collection of elementary tensor non-commutative random variables in the tensor product 
of
probability spaces $(\A_1\otimes\cdots\otimes\A_K,\phi_1\otimes\cdots\otimes\phi_K)$,
when is $\D$ $*$-free?
(See Section \ref{subsec:intro-notation} for a precise formulation of this problem.)

Settling whether or not freeness occurs in tensor products is a recurring problem in operator algebras,
and the following two examples provide a natural motivation for the above question:
\begin{enumerate}[(A)]
\item If $(a_{1;i}:i\in I)$ is a $*$-free family of Haar unitary variables and
$a_{k,i}$ are arbitrary unitary variables for $k\geq2$,
then the $*$-freeness persists at the level of the tensor product $\D$.
\item A converse of (A) holds true if all variables $a_{k;i}$ are group-like elements (see Corollary \ref{Corollary: Main Group} of Proposition \ref{Proposition: Main Group}).
\end{enumerate}
It is therefore natural to seek to understand the extent to which such simple characterizations hold true in more general cases.
While our results fall short of a complete characterization,
we make notable steps toward identifying necessary and sufficient conditions for the freeness of $\D$.
For example, we show that under evident assumptions, 
if more than one family $(a_{k,i}:i\in I)$ contains non-unitary variables,
then the tensor family fails to be $*$-free (see Theorem \ref{Theorem: Main Theorem} (1)). 
\end{abstract}

\maketitle


\section{Introduction}

\subsection{Motivating observations}

In connection with recent investigations on quantum expanders and related topics in operator algebras
\cite{OzawaPisier2016,Pisier2014,Pisier2015},
several years ago
G. Pisier and R. Speicher asked the following question to the first named author:
given $U_1^{(N)},\ldots, U_n^{(N)}$, $n$ independent Haar distributed $N\times N$ unitary random matrices,
are $$U_1^{(N)}\otimes \overline{U_1^{(N)}} ,\ldots, U_n^{(N)}\otimes \overline{U_n^{(N)}}$$
asymptotically $*$-free as $N\to\infty$ ($\bar{U_i^{(N)}}$ denotes the entrywise complex conjugate)?

Thanks to the almost sure asymptotic $*$-freeness of $(U_1^{(N)},\ldots, U_n^{(N)})$ (see \cite{HiaiPetz2000}, for instance),
a simple argument shows that the above question can be answered in the affirmative: 
for any collection $(V_1^{(N)},\ldots, V_n^{(N)})$ that converges in joint $*$-distribution to a collection $(v_1,\ldots,v_n)$ of (not necessarily $*$-free)
unitary variables in an arbitrary non-commutative $*$-probability space, 
$(U_1^{(N)}\otimes V_1^{(N)},\ldots, U_n^{(N)}\otimes V_n^{(N)})$ is almost surely asymptotically $*$-free.
This follows directly from the definition of asymptotic $*$-freeness and the fact that $(U_1^{(N)},\ldots, U_n^{(N)})$ converges to a collection of $*$-free Haar unitaries
(see Proposition \ref{Proposition: Tensor Freeness Conditions Implies Freeness} and Section \ref{Sufficient Conditions}
for a detailed proof of a more general version of this result).
Then,
taking $V_i^{(N)}=\overline{U_i^{(N)}}$ solves the above question,
and a version in expectation can be achieved thanks to the Dominated Convergence Theorem.

\begin{remark}
Note that the above reasoning cannot be used to similarly
extend results of strong asymptotic freeness
of random matrices (such as \cite{CollinsMale2014,Male2012})
to the strong asymptotic freeness of tensor products of random matrices.
Characterizing the occurence of strong convergence in tensor products
remains an unsolved and seemingly difficult problem.
\end{remark}

While taking the $V_i^{(N)}$ to be unitary
is natural given the present applications in operator algebras,
one may wonder if a similar phenomenon occurs when the unitarity assumption is dropped.
Understanding the mechanisms that give rise to $*$-freeness in general tensor products
turns out to be a very interesting and surprisingly difficult question with connections to group theory, 
which is what we explore in this paper.

\subsection{Main Problem}\label{subsec:intro-notation}

For a fixed $K\in\N$,
let $(\A_1,\phi_1),\ldots,(\A_K,\phi_K)$ be $*$-probability spaces,
and let $(\A,\phi)=(\A_1\otimes\cdots\otimes\A_K,\phi_1\otimes\cdots\otimes\phi_K)$
be their tensor product
(in which the $\A_k$ are independent in the classical probability sense).
For each $k\in \{1, \ldots , K\}$,
let $\a_k=(a_{k;i}:i\in I)\subset(\A_k,\phi_k)$
be a collection of random variables,
where the same indexing set $I$ is used for all $k$.
Consider the collection
$$\D=\diag(\a_1\otimes\cdots\otimes\a_K)=\big(a_{1;i}\otimes\cdots\otimes a_{K;i}:i\in I\big)\subset(\A,\phi),$$
that is,
the collection of tensor products $a_{1;i(1)}\otimes\cdots\otimes a_{K;i(K)}$ such that
$i(1)=\cdots=i(K)$.
Specifically, if $I=\{1,\ldots ,n\}$ is a finite set (as we will always be able to
assume without loss of assumption), 
$$\D=(a_{1;1}\otimes\cdots\otimes a_{K;1}\,\,,\,\,
a_{1;2}\otimes\cdots\otimes a_{K;2}\,\,,\,\,
\ldots \,\,,\,\, a_{1;n}\otimes\cdots\otimes a_{K;n}).$$
The problem we investigate in this paper is the following.

\begin{problem}\label{Problem: General Problem}
When is the collection $\D$ $*$-free?
\end{problem}

In order to formulate our results concerning Problem \ref{Problem: General Problem},
we introduce two definitions.

\begin{definition}
A noncommutative polynomial $M\in\C\langle x,x^*\rangle$ in the indeterminates $x$ and $x^*$
is called a $\boldsymbol*${\bf-word}
if it can be written as
$$M(x)=x^{n(1)}\cdots x^{n(t)},\qquad\text{$t\in\N$ and $n(1),\ldots,n(t)\in\{1,*\}$},$$
that is,
$M$ is a monomial with no constant factor.
\end{definition}

\begin{definition}\label{Definition: Tensor Freeness Conditions}
We say that $\D$ satisfies the {\bf tensor freeness conditions} (TFC) if:
there exists $k\in \{1, \ldots , K\}$ such that the collection $\a_k$ is $*$-free,
and such that for every $*$-word $M$ and index $i\in I$,
\begin{enumerate}
\item if $\phi\big(M(a_{1;i}\otimes\cdots\otimes a_{K;i})\big)=0,$
then $\phi_k\big(M(a_{k;i})\big)=0$; and
\item if $\phi\big(M(a_{1;i}\otimes\cdots\otimes a_{K;i})\big)\neq0,$
then $M(a_{l;i})$ is deterministic (i.e.,
a constant multiple of the unit vector in $\A_l$) for every $l\neq k$.
\end{enumerate}
In this case, we call 
$\a_k$
a {\bf dominating collection}.
\end{definition}

It can be shown with straightforward computations that the TFC provide a sufficient condition for Problem \ref{Problem: General Problem}:

\begin{proposition}\label{Proposition: Tensor Freeness Conditions Implies Freeness}
If $\D$ satisfies the TFC,
then it is $*$-free.
\end{proposition}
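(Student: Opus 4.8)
The plan is to verify $*$-freeness straight from its definition, reducing to the vanishing of alternating products of centered $*$-words and then exploiting the tensor-product structure slot by slot. Write $D_i:=a_{1;i}\otimes\cdots\otimes a_{K;i}$ for the $i$-th entry of $\D$. Since the non-unital $*$-algebra generated by a single $D_i$ is linearly spanned by the $*$-words $M(D_i)$, it suffices by linearity to show that
$$\phi\Big(\prod_{j=1}^m\big(M_j(D_{i_j})-c_j\big)\Big)=0,\qquad c_j:=\phi\big(M_j(D_{i_j})\big),$$
for every $m\geq1$, every choice of $*$-words $M_1,\ldots,M_m$, and every alternating index sequence $i_1\neq i_2\neq\cdots\neq i_m$.

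The key algebraic observation is that a $*$-word evaluates componentwise on an elementary tensor, so that
$$M_j(D_{i_j})=M_j(a_{1;i_j})\otimes\cdots\otimes M_j(a_{K;i_j}),\qquad c_j=\prod_{l=1}^K\phi_l\big(M_j(a_{l;i_j})\big).$$
First I would show, using the two TFC conditions, that each centered factor $M_j(D_{i_j})-c_j$ is in fact a single elementary tensor whose $k$-th slot is $\phi_k$-centered. Indeed, if $c_j=0$ then $M_j(D_{i_j})-c_j=M_j(D_{i_j})$ is already a single elementary tensor, and condition (1) gives $\phi_k(M_j(a_{k;i_j}))=0$, so its $k$-th slot is centered. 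If $c_j\neq0$ then condition (2) forces $M_j(a_{l;i_j})=\lambda_{l,j}1_{\A_l}$ for all $l\neq k$; collecting $\mu_j:=\prod_{l\neq k}\lambda_{l,j}$ one finds
$$M_j(D_{i_j})-c_j=\mu_j\cdot\big(1\otimes\cdots\otimes\big(M_j(a_{k;i_j})-\phi_k(M_j(a_{k;i_j}))\big)\otimes\cdots\otimes1\big),$$
the centered element sitting in the $k$-th slot, again an elementary tensor with $\phi_k$-centered $k$-th component.

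With this in hand, the product $\prod_j(M_j(D_{i_j})-c_j)$ is itself a scalar multiple of a single elementary tensor, obtained by multiplying the factors slot by slot, so $\phi=\phi_1\otimes\cdots\otimes\phi_K$ factorizes across the slots. The $k$-th slot contributes
$$\phi_k\Big(\prod_{j=1}^m Y_j\Big),\qquad Y_j:=M_j(a_{k;i_j})-\phi_k\big(M_j(a_{k;i_j})\big),$$
which is an alternating product (since $i_1\neq i_2\neq\cdots\neq i_m$) of $\phi_k$-centered elements drawn from the subalgebras generated by the $a_{k;i_j}$. Because $\a_k$ is $*$-free, this quantity vanishes, and hence so does the whole expression, settling Problem \ref{Problem: General Problem} in the affirmative under TFC.

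I expect the only genuinely delicate point to be the second paragraph: in a tensor product the centering of a product, $a\otimes b-\phi_1(a)\phi_2(b)$, is \emph{not} in general the tensor of the centerings, so a naive slot-by-slot factorization would produce cross terms that need not vanish. Conditions (1) and (2) of the TFC are precisely what reconcile this, guaranteeing that each centered factor collapses to a single elementary tensor carrying the centering in the distinguished dominating slot $k$; once this is established, the remainder is a routine application of the tensor-state factorization together with the freeness of $\a_k$.
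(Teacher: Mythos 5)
Your proposal is correct and follows essentially the same route as the paper: both reduce to the vanishing of alternating products of centered $*$-words and then use TFC conditions (1) and (2), in the same two-case analysis, to show each centered factor $M_j(D_{i_j})^\circ$ is a single elementary tensor whose dominating slot is $\phi_k$-centered, after which the state factorizes and the $*$-freeness of $\a_k$ finishes the argument. The ``delicate point'' you flag is exactly the content of the paper's equation (3.2), so nothing is missing.
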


Indeed,
as explained in greater detail in Section \ref{Sufficient Conditions},
the TFC are arguably one of the simplest sufficient conditions for $\D$ to be $*$-free:
we assume that one of the collections $\a_k$ is $*$-free,
and then conditions (1) and (2) in Definition \ref{Definition: Tensor Freeness Conditions}
are specifically designed to
ensure that the $*$-freeness present in $\a_k$ will be preserved in $\D$.

As one might expect,
the TFC do not characterize the $*$-freeness of $\D$ in general,
as it is possible to construct an example where neither $\a_1$ nor $\a_2$ is $*$-free
and $\D=\diag(\a_1\otimes\a_2)$ is $*$-free,
and thus with no dominating collection
(see Example \ref{Example: Tensor Freeness Conditions Are Not Necessary}).

The main results of this paper are that,
in two specific situations,
the TFC are necessary for the $*$-freeness of $\D$,
namely, in the case were the elements $a_{k;i}$ (a) are group like elements
and (b) form $*$-free families.

In the forthcoming two subsections, we elaborate on these two cases (Section \ref{subsec:group-algebra}
for case (a), and Section \ref{subsec:*-free-family} for case (b)).

\subsection{Group Algebras}\label{subsec:group-algebra}

The first case we consider is that of group algebras equipped with the canonical trace.
In this setting,
we obtain the following result (Section \ref{Section: Groups}).

\begin{proposition}\label{Proposition: Main Group}
Let $G_1,\ldots,G_K$ be groups.
For every $k\in \{1, \ldots , K\}$,
let
$\g_k=(g_{k;i}:i\in I)\subset G_k,$
be a collection of group elements,
and define the direct product collection
$$\D_\times=\diag(\g_1\times\cdots\times\g_K)=\big((g_{1;i},\ldots,g_{K;i}):i\in I\big),$$
assuming that $\D_\times$ does not contain the neutral element.
If the collection $\D_\times$ is free,
then there exists $k\in \{1, \ldots , K\}$ such that
the collection $\g_k$ is free in $G_k$, and
for every $n\in\N$ and $i\in I$,
if $(g_{1;i}^n,\ldots,g_{K;i}^n)\neq e$,
then $g_{k,i}^n\neq e$.
\end{proposition}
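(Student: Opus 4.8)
The plan is to translate the statement entirely into the combinatorial group theory of the subgroup $H:=\langle d_i:i\in I\rangle\subseteq G_1\times\cdots\times G_K$ generated by the elements $d_i:=(g_{1;i},\ldots,g_{K;i})$. With the canonical trace $\tau(g)=\mathbf 1_{\{g=e\}}$, freeness of $\D_\times$ says precisely that $H$ is the free product $\ast_{i\in I}\langle d_i\rangle$ of the cyclic groups $\langle d_i\rangle$. First I would introduce the coordinate projections $\pi_k\colon H\to G_k$, $\pi_k(d_i)=g_{k;i}$, and their kernels $N_k:=\ker\pi_k\trianglelefteq H$. Since the map $h\mapsto(\pi_1 h,\ldots,\pi_K h)$ is just the inclusion $H\hookrightarrow G_1\times\cdots\times G_K$, we have $\bigcap_{k}N_k=\{e\}$. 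The key observation is that the whole conclusion reduces to showing that \emph{one} kernel is trivial: if $N_k=\{e\}$, then $\pi_k$ restricts to an isomorphism of $H$ onto $\langle g_{k;i}:i\in I\rangle$, which transports the free-product decomposition and shows $\g_k$ is free in $G_k$; moreover injectivity gives $g_{k;i}^n=\pi_k(d_i^n)\neq e$ whenever $d_i^n\neq e$, which is exactly the required power condition.

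The heart of the argument is a structural lemma about normal subgroups of free products of cyclic groups: if $H$ is such a group and is non-elementary (neither cyclic nor infinite dihedral), then two nontrivial normal subgroups cannot commute elementwise. I would prove this through centralizers: by Bass--Serre/Kurosh theory the centralizer in $H$ of any nontrivial element is cyclic (it is either a conjugate of a free factor or the infinite cyclic group generated by a primitive root of a hyperbolic element). Hence if $N,M\trianglelefteq H$ are nontrivial with $[N,M]=\{e\}$, picking $1\neq n\in N$ forces $M\subseteq C_H(n)$, so $M$ is a nontrivial cyclic normal subgroup; but a non-elementary free product of cyclic groups has no such subgroup (an infinite one would make $H$ virtually cyclic, and a finite one would be a nontrivial finite normal subgroup, which such a group does not contain). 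Consequently $N\cap M\supseteq[N,M]$ can be trivial only if one of $N,M$ is.

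With this lemma in hand I would conclude by induction on $K$, splitting the ambient product as $G_1\times G'$ with $G'=G_2\times\cdots\times G_K$. The kernels $N_1=\ker\pi_1$ and $N'=\ker(H\to G')$ commute elementwise, because in the block decomposition their elements have the forms $(e,\ast)$ and $(\ast,e)$, and their intersection is trivial. The lemma therefore forces $N_1=\{e\}$ or $N'=\{e\}$. In the first case $k=1$ works; in the second, $\pi'$ is injective, so $H\cong\pi'(H)$ is again a free product of cyclic groups and $(\pi'(d_i))_{i\in I}$ is a free collection in $G'$, to which the inductive hypothesis applies, yielding some $k\in\{2,\ldots,K\}$. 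Injectivity of $\pi'$ then carries both the freeness and the power condition back to $\D_\times$, and the base case $K=1$ is immediate.

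The main obstacle is the group-theoretic lemma of the second paragraph, and in particular making it uniform in $K$. For $K=2$ the two projection kernels automatically commute, but for $K\geq3$ the full family $N_1,\ldots,N_K$ need not commute pairwise, which is exactly why I route the induction through the binary split $G_1\times G'$, where commutativity is restored. The remaining care is with torsion: one must verify the centralizer computation and the absence of nontrivial finite normal subgroups for genuine free products of finite cyclic groups, not only for free groups, and separately dispose of the elementary cases where $H$ is cyclic or infinite dihedral by direct inspection.
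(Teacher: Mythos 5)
Your argument is correct in substance and shares its entire skeleton with the paper's proof: the same binary split $G_1\times(G_2\times\cdots\times G_K)$ driving an induction on $K$, the same translation of freeness into the free-product structure of $H=\ast_{i\in I}\langle D_i\rangle$, the same observation that $\ker(\pi_1)$ and $\ker(\pi_2)$ commute elementwise with trivial intersection, and the same kernel criterion reducing everything to showing that one kernel is trivial. Where you genuinely diverge is the key step. The paper stays at the level of elements: it picks $x\in\ker(\pi_1)\setminus\ker(\pi_2)$ and $y\in\ker(\pi_2)\setminus\ker(\pi_1)$, invokes the Magnus--Karrass--Solitar description of commuting elements in a free product to force $x,y\in z\langle D_i\rangle z^{-1}$ for a single $i$, and then exhibits an explicit length-eight word (Lemma \ref{Lemma: Direct Product Induces Relations}) that collapses to $e$ in the direct product, contradicting freeness. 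You instead prove a structural lemma: in a non-elementary free product of cyclic groups, two nontrivial normal subgroups cannot commute elementwise, via cyclicity of centralizers and the absence of nontrivial cyclic (finite or infinite) normal subgroups. Both routes rest on the same underlying centralizer facts, so neither is more elementary at bottom; yours buys a cleaner, reusable normal-subgroup statement and avoids the explicit word computation, while the paper's buys uniformity, since its word argument works verbatim in the infinite dihedral case $\langle D_1\rangle\ast\langle D_2\rangle\cong\Z/2\ast\Z/2$ that your lemma excludes. You do flag that case for separate inspection, and it does go through: there any two commuting nontrivial normal subgroups lie in the translation subgroup $\cong\Z$ and hence meet nontrivially, so the trivial-intersection hypothesis finishes it. One caution on your other elementary case: $H$ cyclic means $|I|=1$, and there the proposition actually fails (take $g_{1;1}$ of order $2$ and $g_{2;1}$ of order $3$: the singleton is free but no coordinate detects all nontrivial powers), so this case cannot be "disposed of by direct inspection" but must be excluded by assuming $|I|\geq 2$ --- an assumption the paper also makes only implicitly, when Lemma \ref{Lemma: Direct Product Induces Relations} picks $j\neq i$.
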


Given the well-known correspondence between $*$-freeness in groups algebras equipped with the canonical trace
and freeness in groups (see Proposition \ref{Proposition: Freeness in Groups vs Group Algebras}),
the following corollary regarding the TFC is readily established.

\begin{corollary}\label{Corollary: Main Group}
Suppose that for every $k\in \{1, \ldots , K\}$,
$\A_k=\C G_k$ is the complex group algebra of some group $G_k$,
$\phi_k=\tau_e$ is the canonical trace,
and $\a_k\subset G_k$ is a collection of group elements.
Suppose further that $\D$ does not contain the unit vector in $\A$.
If $\D$ is $*$-free,
then the TFC are satisfied.
\end{corollary}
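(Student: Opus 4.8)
The plan is to prove the corollary by transporting everything through the dictionary between group algebras and groups and then invoking Proposition \ref{Proposition: Main Group}. First I would record the standard algebra identification $\C G_1\otimes\cdots\otimes\C G_K\cong\C(G_1\times\cdots\times G_K)$, under which the product trace $\tau_e\otimes\cdots\otimes\tau_e$ is carried to the canonical trace $\tau_e$ on $\C(G_1\times\cdots\times G_K)$ (with $e=(e_1,\ldots,e_K)$ the neutral element), and each elementary tensor $a_{1;i}\otimes\cdots\otimes a_{K;i}=g_{1;i}\otimes\cdots\otimes g_{K;i}$ is carried to the group element $(g_{1;i},\ldots,g_{K;i})$. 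Under this identification, $\D$ becomes precisely the direct-product collection $\D_\times$ of Proposition \ref{Proposition: Main Group}, and the hypothesis that $\D$ omits the unit vector of $\A$ becomes the hypothesis that $\D_\times$ omits $e$. By Proposition \ref{Proposition: Freeness in Groups vs Group Algebras}, the $*$-freeness of $\D$ is equivalent to the freeness of $\D_\times$, so Proposition \ref{Proposition: Main Group} applies and yields an index $k$ for which $\g_k$ is free in $G_k$ and for which $(g_{1;i}^n,\ldots,g_{K;i}^n)\neq e$ implies $g_{k;i}^n\neq e$ for every $n$ and $i$.

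Next I would translate this conclusion into the TFC of Definition \ref{Definition: Tensor Freeness Conditions}, taking $\a_k$ as the candidate dominating collection. That $\a_k$ is $*$-free follows immediately from the freeness of $\g_k$ by a second application of Proposition \ref{Proposition: Freeness in Groups vs Group Algebras}. The key simplification is that a $*$-word evaluated at a single group element collapses to an integer power: if $M(x)=x^{n(1)}\cdots x^{n(t)}$ with $n(j)\in\{1,*\}$, then, since $g^*=g^{-1}$ for a group element, $M(g)=g^m$ where $m$ is the signed exponent sum ($+1$ for each $n(j)=1$ and $-1$ for each $n(j)=*$). Consequently $\phi\big(M(a_{1;i}\otimes\cdots\otimes a_{K;i})\big)=\phi\big(g_{1;i}^m\otimes\cdots\otimes g_{K;i}^m\big)$ equals $1$ when $(g_{1;i}^m,\ldots,g_{K;i}^m)=e$ and $0$ otherwise, and likewise $\phi_k\big(M(a_{k;i})\big)=\tau_e(g_{k;i}^m)$ is the indicator of $g_{k;i}^m=e$.

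With these computations in hand, conditions (1) and (2) of the TFC become transparent. For condition (1), the hypothesis $\phi(M)=0$ says $(g_{1;i}^m,\ldots,g_{K;i}^m)\neq e$, and the power condition furnished by Proposition \ref{Proposition: Main Group} then forces $g_{k;i}^m\neq e$, that is $\phi_k\big(M(a_{k;i})\big)=0$; here one invokes the proposition for the natural number $|m|$ after noting $g^m=e\iff g^{|m|}=e$, the degenerate case $m=0$ giving $\phi(M)=1\neq0$ and so falling under condition (2) instead. For condition (2), the hypothesis $\phi(M)\neq0$ forces $(g_{1;i}^m,\ldots,g_{K;i}^m)=e$, hence $g_{l;i}^m=e$ for every $l$; thus $M(a_{l;i})=e_l$ is the unit of $\A_l$, which is deterministic, for all $l\neq k$. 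This establishes the TFC.

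The only genuinely delicate point I anticipate is the sign bookkeeping, namely the passage between the $\Z$-valued exponent $m$ attached to a $*$-word and the $\N$-indexed powers appearing in Proposition \ref{Proposition: Main Group}, together with the correct handling of the cases $m=0$ and $m<0$; everything else is a direct transcription through the group/group-algebra correspondence.
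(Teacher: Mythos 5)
Your proposal is correct and follows exactly the route the paper intends: the paper leaves this corollary as "readily established" from Proposition \ref{Proposition: Main Group} via the identification $\C G_1\otimes\cdots\otimes\C G_K\cong\C(G_1\times\cdots\times G_K)$ and Proposition \ref{Proposition: Freeness in Groups vs Group Algebras}, and your write-up supplies precisely that translation, including the correct reduction of a $*$-word on a group element to a signed power and the careful handling of the cases $m=0$ and $m<0$.
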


Proposition \ref{Proposition: Main Group} can chiefly be explained by the observation that the structure of the group
operation in a direct product can sometimes introduce nontrivial relations between elements
other than the identity through products of the form
\begin{align}\label{Equation: Nontrivial Relations in Direct Products}
e=(g,e)(e,h)(g,e)^{-1}(e,h)^{-1},\qquad g,h\neq e,
\end{align}
and that the TFC (or an equivalent formulation of the TFC in the group context)
are the only way to ensure that this phenomenon does not occur.

\subsection{$*$-Free Families}\label{subsec:*-free-family}

In the second case that we consider, we assume that the families $\a_1,\ldots,\a_K$
are $*$-free,
and that the functionals $\phi_1,\ldots,\phi_K$ are faithful traces.
In this setting,
we obtain the following necessary conditions (Section \ref{Section: Main Result}).

\begin{theorem}\label{Theorem: Main Theorem}
Suppose that for every $k\in \{1, \ldots , K\}$,
the collection $\a_k$ is $*$-free and $\phi_k$ is a faithful trace,
and suppose that $\D$ does not contain the zero vector or a scalar multiple of the unit vector in $\A$.
If $\D$ is $*$-free,
then the following conditions hold.
\begin{enumerate}
\item For all $k\in \{1, \ldots , K\}$ except at most one,
every variable in $\a_k$ is a constant multiple of a unitary variable.
\item If there exists $k\in \{1, \ldots , K\}$ such that $\a_k$ contains a variable that is not a constant multiple of a unitary variable,
then the TFC are satisfied with $\a_k$ as a dominating collection.
\item Suppose that $\a_1,\ldots,\a_k$ only contain constant multiples of unitary variables.
If there exists $k\in \{1, \ldots , K\}$,
a $*$-word $M\in\C\langle x,x^*\rangle$, and $i\in I$ such that $\phi\big(M(a_{1;i})\otimes\cdots\otimes M(a_{K;i})\big)\neq0$ and $M(a_{k;i})\neq\phi_k\big(M(a_{k;i})\big)$,
then $\D$ satisfies the TFC and $\a_k$ is a dominating collection.
\end{enumerate}
\end{theorem}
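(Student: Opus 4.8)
My plan is a systematic comparison of two evaluations of $\phi$ on alternating products built from the tensors $d_i:=a_{1;i}\otimes\cdots\otimes a_{K;i}$. On one side, $*$-freeness of $\D$ forces every alternating product of centered elements taken from the algebras generated by distinct $d_i$ to have vanishing $\phi$. On the other side, since $M(d_i)=M(a_{1;i})\otimes\cdots\otimes M(a_{K;i})$ for every $*$-word $M$, each such $\phi$ factorizes as $\prod_k\phi_k(\cdots)$, and I evaluate these factor traces using the hypothesis that each $\a_k$ is $*$-free. Whenever the second computation returns a nonzero value, $\D$ cannot be free. Faithfulness of the $\phi_k$ promotes ``non-scalar'' to ``strictly positive variance'' and ``nonzero'' to ``$\phi_k(x^*x)>0$'', which is what makes the contradictions strict; and because the factor traces simply multiply, it suffices to exhibit the mechanism for two distinguished factors, the remaining factors contributing strictly positive constants $\phi_l$ of nonzero positive elements (here the hypothesis that $\D$ has no zero entry is used).

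For part (1) I argue by contradiction: suppose two distinct factors, say $k=1,2$, each contain a non-unitary variable, so that $a_{1;i_1}^*a_{1;i_1}$ and $a_{2;i_2}^*a_{2;i_2}$ are non-scalar for some $i_1,i_2$. If $i_1\ne i_2$, I take the centered self-adjoint elements $u=d_{i_1}^*d_{i_1}-\phi(d_{i_1}^*d_{i_1})$ and $v=d_{i_2}^*d_{i_2}-\phi(d_{i_2}^*d_{i_2})$ and expand $\phi(uvuv)$. Using tensor factorization and the freeness of each $\a_k$ (so that $a_{k;i_1}^*a_{k;i_1}$ and $a_{k;i_2}^*a_{k;i_2}$ are free), this collapses to a sum of terms of the form $(\text{mean})^2\cdot(\text{variance})\cdot(\text{variance})$; the term carrying $\Var_1(a_{1;i_1}^*a_{1;i_1})\,\Var_2(a_{2;i_2}^*a_{2;i_2})$ has a strictly positive coefficient, so $\phi(uvuv)>0$, contradicting freeness. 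The remaining possibility is $i_1=i_2=:i$ with non-unitarity confined to this single common index; then every other $d_j$ is automatically a constant multiple of a unitary and the length-four test degenerates, since $d_j^*d_j$ is scalar. Here I instead compute the length-eight moment $\phi(P\,d_j\,P\,d_j^*\,P\,d_j\,P\,d_j^*)$ with $P=d_i^*d_i-\phi(d_i^*d_i)$. Expanding each factor trace by freeness and cancelling $d_j$ against $d_j^*$, every contribution in which some exponent is isolated in both factors is annihilated by the centering of $P$, and the only survivors are the two ``cross'' terms in which factor $1$ merges one pair of $d_i$'s while factor $2$ merges the complementary pair. When $d_j$ is a Haar unitary their total is exactly $2\Phi\Psi$ with $\Phi=\Var_1(X_1)\,\phi_2(X_2)^2$ and $\Psi=\Var_2(X_2)\,\phi_1(X_1)^2$ (writing $X_l=a_{l;i}^*a_{l;i}$), which is strictly positive by faithfulness; the reduction for a general constant-multiple-of-a-unitary $d_j$ is addressed below.

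Parts (2) and (3) bootstrap on (1) and are proved in parallel. By (1) at most one collection, say $\a_k$, contains a non-unitary variable, and I must show this $\a_k$ is a dominating collection, i.e. that the two conditions defining the TFC hold; I verify each by contradiction, reproducing the two-factor variance configuration of part (1). If condition (2) of the TFC fails, there are $M$ and $i$ with $\phi(M(d_i))\ne0$ yet $M(a_{l;i})$ non-scalar for some $l\ne k$; since $l\ne k$ the element $M(a_{l;i})$ is a constant multiple of a unitary, and its non-scalarness supplies a strictly positive variance in factor $l$, while the distinguished non-scalar variable of $\a_k$ supplies another in factor $k$. Feeding these into the length-eight moment exactly as above --- now with $M$-words and the accompanying unitary powers in place of $d_i^*d_i$ and $d_j$ --- produces a strictly positive value, contradicting freeness. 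Condition (1) of the TFC is handled the same way: if $\phi(M(d_i))=0$ but $\phi_k(M(a_{k;i}))\ne0$, the vanishing must come from an off-factor trace $\phi_l(M(a_{l;i}))=0$, and this mismatch is again detected by a suitable alternating moment. Part (3) is the verbatim analogue in the all-unitary regime, the role of the non-unitary variable being played by the nontrivial word $M(a_{k;i})$ guaranteed by the hypothesis; once both TFC conditions hold, Proposition~\ref{Proposition: Tensor Freeness Conditions Implies Freeness} confirms consistency.

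The principal obstacle, and where I expect the real work to lie, is turning ``some off-factor word is non-scalar'' into a single alternating moment that is provably nonzero: one must arrange the interleaving powers of the dominating piece and the off-factor (unitary) piece so that the unavoidable cancellations leave one surviving term of definite sign, and one must control this uniformly over arbitrary non-Haar unitaries and over all $K$ factors simultaneously. The clean ``$2\Phi\Psi$'' outcome above relies on the unitary cancelling against its adjoint; for a general unitary, additional non-centered powers survive, and isolating a term of fixed sign --- rather than merely a generically nonzero one --- is the delicate combinatorial core on which parts (1)--(3) all ultimately depend.
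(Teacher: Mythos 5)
Your overall strategy coincides with the paper's: equate the factorized evaluation $\phi\big(M(D_i:i\in I)\big)=\prod_k\phi_k\big(M(a_{k;i}:i\in I)\big)$ with the value forced by the $*$-freeness of $\D$, first on $4$-letter words of the form $M(D_i)N(D_j)M(D_i)^*N(D_j)^*$ and then on $8$-letter words involving an auxiliary unitary. Your generic case of claim (1) (the two non-unitary variables sit at distinct indices) is correct and is essentially the paper's argument: with $x_k=\phi_k\big((a_{k;i}a_{k;i}^*)^2\big)\ge1$ and $y_k=\phi_k\big((a_{k;j}a_{k;j}^*)^2\big)\ge1$, the identity $\prod_k(x_k+y_k-1)=\prod_kx_k+\prod_ky_k-1$ forces $(x_k-1)(y_l-1)=0$ for $k\neq l$.

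The gap is the part you yourself defer: you verify the $8$-letter computation only when the auxiliary element $d_j$ is \emph{Haar} unitary, and you state that isolating a term of fixed sign for a general unitary is an unresolved ``delicate combinatorial core.'' That core is exactly where the theorem lives, and it is not optional: the auxiliary index $j$ is arbitrary, and nothing forces $d_j$ to be Haar, nor even to satisfy $\phi(D_j)=0$. The paper closes this in two distinct ways. When $\phi(D_j)=0$, no Haar assumption is needed, because the identity (\ref{Equation: *-freeness Computation 2}) holds for \emph{any} unitary $b$ with $\psi(b)=0$; the resulting scalar equation $2\prod_{k\in\U}x_k-1=\prod_{k\in\U}(2x_k-1)$ is then ruled out for $|\U|\ge2$ by Proposition \ref{Proposition: Polynomial Expansion 5}. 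When $\phi(D_j)\neq0$, the $8$-letter word is useless (your ``non-centered powers survive''), and the paper switches to the $4$-letter word with $N(x)=x$, leading to $1+\prod_kx_kt_k-\prod_kt_k=\prod_k(1+x_kt_k-t_k)$, which requires a different inequality (Proposition \ref{Proposition: Polynomial Expansion 3}) together with the observation that $|\phi(D_j)|=1$ would make $D_j$ a scalar multiple of the unit, excluded by hypothesis. Your sketch contains neither the $\phi(D_j)\neq0$ branch nor the non-Haar positivity estimates, and the omission propagates to claims (2) and (3), where the paper additionally needs Lemma \ref{Lemma: Main Theorem Step 3-1}, the sets $\U_n$ and $\O$, and a further case analysis to show the dominating factor is the \emph{same} $k$ for all powers $n$ and all indices. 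Until those positivity statements are supplied, your argument establishes only the generic sub-case of claim (1).
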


\begin{remark}
Note that Theorem \ref{Theorem: Main Theorem} falls one case short of providing a complete characterization for the $*$-freeness of $\D$
when the $\a_k$ are $*$-free and the $\phi_k$ are faithful traces.
Indeed,
the only case where it is not shown that the TFC are necessary for the $*$-freeness of $\D$
is the one where $\a_1,\ldots,\a_k$ only contain constant multiples of unitary random variables
and condition (3) in Theorem \ref{Theorem: Main Theorem} does not hold.
Unfortunately,
it can be shown that the methods we use in this paper cannot settle this remaining case (see Section \ref{Section: Discussion} for more details).
\end{remark}

Our method of proof for Theorem \ref{Theorem: Main Theorem} uses elementary methods:
by definition of tensor product of $*$-probability spaces,
\begin{align}\label{Equation: Main Theorem Method}
\phi\big(M(a_{1;i}\otimes\cdots\otimes a_{K;i}:i\in I)\big)=\phi_1\big(M(a_{1;i}:i\in I)\big)\cdots\phi_K\big(M(a_{K;i}:i\in I)\big)
\end{align}
for any $*$-word $M\in\C\big\langle x_i,x_i^*:i\in I\rangle$.
If it is assumed that $\D$ is $*$-free and that $\a_k$ is $*$-free for all $k\in \{1, \ldots , K\}$,
then by using free probability,
it is possible to reduce both sides of (\ref{Equation: Main Theorem Method}) as quantities that only depend on the distributions
of the variables $a_{k;i}$,
and thus obtain necessary conditions for the $*$-freeness (see Section \ref{Section: General Strategy of Proof} for more details).
The assumption that the functionals $\phi_k$ are 
faithful is
used to obtain various Cauchy-Schwarz-type inequalities (such as Proposition \ref{Proposition: Cauchy-Schwarz}),
which allow us to derive multiple technical estimates (Appendix \ref{Appendix}) that are crucial in our proof.

\subsection{Organization}
The organization of this paper is as follows.
In Section \ref{Background in Free Probability},
we provide the notions in free probability that are used in this paper.
In Section \ref{Sufficient Conditions},
we prove that the TFC are sufficient for the $*$-freeness of $\D$ (Proposition \ref{Proposition: Tensor Freeness Conditions Implies Freeness}),
and we provide an example that shows that the TFC are not necessary in general.
In Section \ref{Section: Groups},
we prove Proposition \ref{Proposition: Main Group}.
In Section \ref{Section: Main Result},
we prove Theorem \ref{Theorem: Main Theorem}.
In Section \ref{Section: Discussion},
we discuss the limitations of our methods and possible directions for future research.
In Appendix \ref{Appendix},
we prove a few technical results that are used in some of our proofs but that are otherwise unrelated to the subject of this paper.

\subsection{Acknowledgements}
The research reported in this paper was initiated during the M.Sc. study of 
P.-Y. G.-L. under the supervision of B.C. at the University of Ottawa. 
The authors thank Camille Male for constructive discussions regarding early drafts of this paper as well as his significant involvement 
in P.-Y. G.-L.'s studies at that time.
The authors thank Yves de Cornulier for insightful comments regarding Proposition 1.6 and his answer to the MathOverflow question \href{http://www.mathoverflow.net/questions/204370/}{mathoverflow.net/questions/204370}.
They also thank an anonymous referee for a very careful reading of the initial version of the manuscript and many 
constructive suggestions that improved the paper.
B.C. was supported financially by NSERC, JSPS Kakenhi,  and ANR-14-CE25-0003.
P.-Y. G.-L. was supported financially during his M.Sc. studies by NSERC and OGS scholarships,
and as a Ph.D. student by NSERC and Gordon Wu scholarships.


\section{Background in Free Probability}\label{Background in Free Probability}

In this section,
we introduce the notation and results in free probability that are used in this paper.
For a more thorough introduction to the subject,
the reader is referred to
\cite{NicaSpeicher2006} Lectures 1 to 5, or \cite{VoiculescuDykemaNica1992} Chapters 1 and 2.

\subsection{$*$-Probability Spaces}

A $\boldsymbol*${\bf-probability space} consists of a pair $(\BC,\psi)$,
where $\BC$ is a unital $*$-algebra over $\C$,
and $\psi:\BC\to\C$ is a linear functional that is unital (i.e., $\psi(1)=1$) and positive (i.e., $\psi(bb^*)\geq0$ for all $b\in\BC$).
The following is a fundamental example of $*$-probability space.

\begin{definition}\label{Definition: Group Algebras}(\cite{NicaSpeicher2006} Example 1.4.)
Given a group $G$,
let $\C G$ denote its group algebra over $\C$,
and let $\tau_e:\C G\to\C$ denote the {\bf canonical trace},
which is defined as the linear extension to $\C G$ of the following map on $G$:
$$
\tau_e(g)=
\begin{cases}
1&\text{if }g=e;\text{ and}\\
0&\text{otherwise.}
\end{cases}
$$
$(\C G,\tau_e)$ is a $*$-probability space.
\end{definition}

Let $(\BC,\psi)$ be a $*$-probability space.
The functional $\psi$ is said to be {\bf faithful} if $\psi(bb^*)=0$ if and only if $b=0$,
and a {\bf trace} if $\psi(bc)=\psi(cb)$ for all $b,c\in\BC$.
An element $b\in\A$ is said to be {\bf unitary} if $bb^*=b^*b=1$.
The {\bf variance}
is defined as
$$\Var[b]:=\psi\Big(\big(b-\psi(b)\big)\big(b-\psi(b)\big)^*\Big).$$

\begin{proposition}\label{Proposition: Variance with Faithful Functionals}
Let $(\BC,\psi)$ be a $*$-probability space such that $\psi$ is faithful.
For every $b\in\BC$,
if $\Var[b]=0$,
then $b=\psi(b)$ (i.e.,
$b$ is a constant multiple of the unit vector,
in other words, $b$ is deterministic).
\end{proposition}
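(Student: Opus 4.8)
The plan is to reduce the statement directly to the definition of faithfulness by recentering the variable. First I would set $c := b - \psi(b)\cdot 1 \in \BC$, the element whose ``norm-square'' the variance measures; unwinding the definition of the variance then gives $\Var[b] = \psi\big((b-\psi(b))(b-\psi(b))^*\big) = \psi(cc^*)$.

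With this substitution the hypothesis $\Var[b]=0$ becomes simply $\psi(cc^*)=0$. The key step is then to invoke faithfulness of $\psi$: by definition, for every $d\in\BC$ one has $\psi(dd^*)=0$ if and only if $d=0$. Applying this with $d=c$ yields $c=0$, that is, $b-\psi(b)\cdot 1=0$, which is exactly the assertion $b=\psi(b)$.

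I expect no serious obstacle here; the only point requiring a moment's care is bookkeeping about the order of the product in the faithfulness condition. The variance is written with $c$ on the left and $c^*$ on the right, so the relevant instance is the one for $\psi(cc^*)$ rather than $\psi(c^*c)$, and this is precisely the form in which faithfulness was defined above, so no appeal to the trace property is needed. (Had faithfulness instead been phrased via $\psi(c^*c)$, one would additionally note that $(\BC,\psi)$ being a $*$-probability space guarantees $c^*\in\BC$, and apply the hypothesis to $c^*$ in place of $c$.)
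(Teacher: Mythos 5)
Your argument is correct and is exactly the intended one: the paper states this proposition without proof precisely because it follows immediately from the definitions of $\Var[b]=\psi\big((b-\psi(b))(b-\psi(b))^*\big)$ and of faithfulness applied to $c=b-\psi(b)$. Your remark about the order of the product is also well taken, since the paper's definition of faithfulness is phrased via $\psi(bb^*)$, matching the form appearing in the variance.
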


The following elementary propositions will be useful in computations in later sections.

\begin{proposition}\label{Proposition: Cauchy-Schwarz}
Let $b$ be a noncommutative random variable in a $*$-probability space $(\BC,\psi)$.
If $\psi(bb^*)=1$,
then
\begin{align}\label{Equation: Norm of Element Condition}
|\psi(b)|\leq 1\leq\psi\big((bb^*)^2\big).
\end{align} 
\end{proposition}
\begin{proof}
Since $\psi$ is positive,
$0\leq\psi\big((\e^{\i\theta}+b)(\e^{-\i\theta}+b^*)\big)=2+2\Re\big(\e^{-\i\theta}\psi(b)\big)$
for every $\theta\in[0,2\pi)$.
By choosing $\theta$ such that $\Re(\e^{-\i\theta}\psi(b))=-|\psi(b)|$,
this yields $|\psi(b)|\leq 1$.
Furthermore,
$0\leq\psi\big((bb^*-1)(bb^*-1)\big)=\psi\big((bb^*)^2\big)-1,$
hence $\psi\big((bb^*)^2\big)\geq1$.
\end{proof}

\begin{proposition}\label{Proposition: Unitary Equal to Unit}
Let $(\BC,\psi)$ be a $*$-probability space such that $\psi$ is faithful,
and let $b\in\BC$ be such that $\psi(bb^*)=1$.
If $|\psi(b)|=1$,
then $b=\psi(b)$.
Furthermore,
if $b=\la$ for some $\la\in\C$,
then $|\la|=1$.
\end{proposition}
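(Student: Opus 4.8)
The plan is to prove Proposition~\ref{Proposition: Unitary Equal to Unit} in two parts, leveraging faithfulness of $\psi$ together with the variance criterion of Proposition~\ref{Proposition: Variance with Faithful Functionals}. For the first claim, I would show that the hypotheses $\psi(bb^*)=1$ and $|\psi(b)|=1$ force $\Var[b]=0$, after which Proposition~\ref{Proposition: Variance with Faithful Functionals} immediately yields $b=\psi(b)$. For the second claim, I would directly compute $\psi(bb^*)$ under the assumption $b=\la\cdot 1$.

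First I would expand the variance explicitly. Writing $\mu:=\psi(b)$, we have
\begin{align*}
\Var[b]=\psi\big((b-\mu)(b^*-\bar\mu)\big)=\psi(bb^*)-\bar\mu\,\psi(b)-\mu\,\psi(b^*)+|\mu|^2,
\end{align*}
where I use linearity of $\psi$ and the fact that $\mu,\bar\mu$ are scalars. Since $\psi(b^*)=\overline{\psi(b)}=\bar\mu$ (this follows because $\psi$ is positive, hence self-adjoint in the sense that $\psi(c^*)=\overline{\psi(c)}$), the cross terms combine to $-\bar\mu\mu-\mu\bar\mu=-2|\mu|^2$. Substituting $\psi(bb^*)=1$ and $|\mu|^2=|\psi(b)|^2=1$ gives $\Var[b]=1-2|\mu|^2+|\mu|^2=1-|\mu|^2=0$. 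Faithfulness of $\psi$ then lets me invoke Proposition~\ref{Proposition: Variance with Faithful Functionals} to conclude $b=\psi(b)$.

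For the second statement, suppose $b=\la$ for some $\la\in\C$, meaning $b=\la\cdot 1$. Then $bb^*=|\la|^2\cdot 1$, so $\psi(bb^*)=|\la|^2\psi(1)=|\la|^2$ by unitality. The hypothesis $\psi(bb^*)=1$ forces $|\la|^2=1$, hence $|\la|=1$, as required.

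The only subtle point—hardly an obstacle, but worth flagging—is the use of $\psi(b^*)=\overline{\psi(b)}$. This is a standard consequence of positivity of $\psi$ on a $*$-algebra (positive functionals are automatically self-adjoint), so I would either cite it as a basic property of $*$-probability spaces or note it in passing. Everything else is routine linear algebra combined with the previously established variance characterization, so the proof is short and the main content is simply recognizing that the boundary case $|\psi(b)|=\sqrt{\psi(bb^*)}$ is exactly the equality case that collapses the variance to zero.
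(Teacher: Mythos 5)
Your proof is correct and follows essentially the same route as the paper: compute $\Var[b]=\psi(bb^*)-|\psi(b)|^2=1-|\psi(b)|^2=0$ and invoke Proposition \ref{Proposition: Variance with Faithful Functionals}, with the second claim following from $\psi(bb^*)=|\la|^2$. You are simply more explicit than the paper about the intermediate step $\psi(b^*)=\overline{\psi(b)}$, which is a standard consequence of positivity and fine to cite as such.
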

\begin{proof}
If $\psi(bb^*)=1$,
then
$\Var[b]=\psi(bb^*)-\psi(b)\psi(b^*)=1-|\psi(b)|^2,$
and the result follows from Proposition \ref{Proposition: Variance with Faithful Functionals}.
\end{proof}

\subsection{$*$-freeness}

Let $(\BC,\psi)$ be a $*$-probability space.
A collection $(\BC_i:i\in I)$ of unital $*$-subalgebras of $\BC$ is said to be $\boldsymbol *${\bf -free}
if for every $t\in\N$ and random variables $b_1\in\BC_{i(1)},\ldots,b_t\in\BC_{i(t)}$,
one has $\psi(b_1\cdots b_t)=0$ whenever $i(1)\neq i(2)\neq \cdots\neq i(t)$,
and
$\phi(b_1)=\cdots=\phi(b_t)=0$.
In this context,
a collection $(b_i:i\in I)$ of noncommutative random variables in $\BC$ is said to be $\boldsymbol *${\bf-free}
if the collection of unital $*$-algebras the $b_i$ generate is $*$-free.

\begin{definition}\label{Definition: Centering}
Given a noncommutative random variable $b$ in a $*$-probability space $(\BC,\psi)$,
we use $b^\circ$ to denote the {\bf centering} of $b$,
that is,
$b^\circ=b-\psi(b)$,
hence $\psi(b^\circ)=0$.
\end{definition}

\begin{remark}\label{Remark: Centering and Freeness}
By linearity,
a collection $(b_i:i\in I)\subset\BC$ of noncommutative random variables is $*$-free if and only if for every
$t\in\N$,
indices $i(1)\neq\cdots\neq i(t)$,
and $*$-words $M_1,\ldots,M_t\in\C\langle x,x^*\rangle$,
one has $\phi\big(M_1(b_{i(1)})^\circ\cdots M_t(b_{i(t)})^\circ\big)=0.$
\end{remark}

There exists a well-known correspondence between $*$-freeness in group algebras with the canonical trace
and freeness in groups,
which we state in the following proposition.

\begin{proposition}[\cite{NicaSpeicher2006} Proposition 5.11]\label{Proposition: Freeness in Groups vs Group Algebras}
Let $(g_i:i\in I)$ be a collection of elements in a group $G$.
$(g_i:i\in I)$ is $*$-free as a collection of random variables in $(\C G,\tau_e)$ if and only if $(g_i:i\in I)$ is free
in the group $G$.
\end{proposition}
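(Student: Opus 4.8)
The plan is to translate both notions into a single combinatorial condition on words in the $g_i$ and then observe that the two translations coincide. The starting point is to identify the unital $*$-subalgebra of $\C G$ generated by a single group element $g_i$. Since $g_i$ is unitary in $(\C G,\tau_e)$ (indeed $g_i g_i^*=g_i g_i^{-1}=e$), every $*$-word in $g_i$ collapses to a single power $g_i^n$ with $n\in\Z$; hence this subalgebra is exactly the group algebra $\C\langle g_i\rangle$ of the cyclic subgroup $\langle g_i\rangle$, whose $\C$-basis consists of the distinct elements of $\langle g_i\rangle$.

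Next I would describe the zero-trace (centered) part of each such subalgebra. Because $\tau_e$ reads off the coefficient of $e$ in the group-algebra basis, its kernel on $\C\langle g_i\rangle$ is spanned by the nontrivial elements $\{g_i^n:g_i^n\neq e\}$; concretely, a power satisfies $(g_i^n)^\circ=g_i^n$ when $g_i^n\neq e$ and $(g_i^n)^\circ=0$ when $g_i^n=e$. Feeding this into the criterion of Remark \ref{Remark: Centering and Freeness}, $*$-freeness of $(g_i:i\in I)$ in $(\C G,\tau_e)$ becomes equivalent to the vanishing of $\tau_e\big(g_{i(1)}^{n(1)}\cdots g_{i(t)}^{n(t)}\big)$ for every $t\in\N$, every alternating tuple $i(1)\neq\cdots\neq i(t)$, and every choice of exponents with $g_{i(j)}^{n(j)}\neq e$.

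The last step is to recognize this vanishing as group-theoretic freeness. Since $\tau_e$ of a single group element is $1$ when that element is $e$ and $0$ otherwise, the quantity $\tau_e\big(g_{i(1)}^{n(1)}\cdots g_{i(t)}^{n(t)}\big)$ is precisely the indicator that the product equals the identity. Thus the $*$-freeness condition above says exactly that no reduced word $g_{i(1)}^{n(1)}\cdots g_{i(t)}^{n(t)}$ --- alternating indices, each factor a nontrivial element of the corresponding cyclic subgroup --- can equal $e$, which is the defining reduced-word condition for freeness of the cyclic subgroups $\langle g_i\rangle$ in $G$. Reading the resulting chain of equivalences in both directions establishes the proposition.

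I do not expect a genuine obstacle, since this is the standard dictionary between the group and group-algebra pictures; the only point demanding care is the bookkeeping around the centering, namely verifying that the nontrivial powers $\{g_i^n:g_i^n\neq e\}$ really do span the entire zero-trace part of $\C\langle g_i\rangle$ (so that freeness need only be tested on these basis elements), and checking that the alternating-index requirement of the free-probability definition corresponds to the reduced-word condition once consecutive powers of a common generator are collapsed.
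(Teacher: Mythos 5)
Your argument is correct and is essentially the standard proof of this equivalence; the paper itself does not prove the proposition but cites it from Nica--Speicher (Proposition 5.11), and your reduction --- identifying the generated unital $*$-subalgebra as $\C\langle g_i\rangle$, observing that its zero-trace part is spanned by the non-identity elements of $\langle g_i\rangle$, and then using multilinearity together with the fact that $\tau_e$ of a group element is the indicator of the identity --- is exactly the dictionary that citation refers to. The one point you flag, that freeness need only be tested on the spanning set of centered powers, is handled correctly by linearity, and the alternating-index condition you arrive at matches the paper's stated definition of freeness of $(g_i:i\in I)$ verbatim, so there is no gap.
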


Recall that a collection $(g_i:i\in I)\subset G$ of group elements is said to be {\bf free} if
the canonical homomorphism from the free product of cyclic groups $*_{i\in I}\langle g_i\rangle$
to the group $\langle g_i:i\in I\rangle$ generated by the $g_i$
is an isomorphism,
or,
in other words,
if $g_{i(1)}^{n(1)}\cdots g_{i(t)}^{n(t)}\neq e$ whenever $i(1)\neq\cdots\neq i(t)$,
and $n(1),\ldots,n(t)\in\Z$ are such that $g_{i(l)}^{n(l)}\neq e$ $(l\leq t)$.

The concept 
$*$-freeness can be thought of as an analog of independence for commutative $\C$-valued random variables.
Indeed,
if $(\BC_i:i\in I)$ is $*$-free,
then any expression of the form
$$\psi(b_1\cdots b_t),\qquad t\in\N,~b_1\in\BC_{i(1)},\ldots,b_t\in\BC_{i(t)}$$
can
in principle
be computed using the restriction of $\psi$ onto the unital $*$-subalgebras $\BC_i$.
The following examples of this phenomenon will be used extensively in this paper.

\begin{proposition}[\cite{NicaSpeicher2006} Example 5.15 (3)]\label{Proposition: Computations}
Let $(\BC,\psi)$ be a $*$-probability space,
and let $(\BC_1,\BC_2)$ be $*$-free.
If $b_1\in\BC_1$ and $b_2\in\BC_2$,
then
\begin{align}\label{Equation: *-freeness Computation 1}
\psi(b_1b_2b_1^*b_2^*)=|\psi(b_1)|^2\psi(b_2b_2^*)+|\psi(b_2)|^2\psi(b_1b_1^*)-|\psi(b_1)|^2|\psi(b_2)|^2.
\end{align}
\end{proposition}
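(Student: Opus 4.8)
The plan is to reduce the claim to the defining property of $*$-freeness via the centering trick. First I would record the elementary identity $\psi(b^*)=\overline{\psi(b)}$, valid in any $*$-probability space by positivity (apply positivity to $b+\la$ as $\la$ ranges over $\C$ and compare real and imaginary parts), so that $\psi(b_1^*)=\overline{\psi(b_1)}$ and $\psi(b_2^*)=\overline{\psi(b_2)}$. Writing $\al=\psi(b_1)$ and $\be=\psi(b_2)$, I would then decompose each of the four factors into its centered part plus its mean (Definition \ref{Definition: Centering}),
\begin{align*}
b_1=b_1^\circ+\al,\quad b_2=b_2^\circ+\be,\quad b_1^*=(b_1^*)^\circ+\bar\al,\quad b_2^*=(b_2^*)^\circ+\bar\be,
\end{align*}
where $b_1^\circ,(b_1^*)^\circ\in\BC_1$ and $b_2^\circ,(b_2^*)^\circ\in\BC_2$ are centered, so that $(b_1^*)^\circ=b_1^*-\bar\al$ and $(b_2^*)^\circ=b_2^*-\bar\be$.

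Next I would expand $b_1b_2b_1^*b_2^*$ into the resulting $2^4=16$ terms, apply $\psi$, and pull every scalar mean outside. Each term is then a product of some of $\al,\be,\bar\al,\bar\be$ times $\psi$ of an ordered product of centered factors, indexed by the subset $S\subseteq\{1,2,3,4\}$ of positions taken in centered form. The key mechanism is that, after merging any two adjacent centered factors lying in the \emph{same} subalgebra and re-centering the merged element, the definition of $*$-freeness annihilates every word that genuinely alternates between $\BC_1$ and $\BC_2$ with centered letters, while $\psi(b^\circ)=0$ disposes of any leftover isolated centered factor. A short case check over the sixteen subsets shows that all of them vanish except three: the all-scalar term $S=\varnothing$, equal to $\al\be\bar\al\bar\be=|\al|^2|\be|^2$; the term $S=\{1,3\}$, equal to $|\be|^2\,\psi\big(b_1^\circ(b_1^*)^\circ\big)$; and the term $S=\{2,4\}$, equal to $|\al|^2\,\psi\big(b_2^\circ(b_2^*)^\circ\big)$. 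The subtle point—and the only genuine obstacle—is the bookkeeping here: correctly recognizing that the configurations $S=\{1,3\}$ and $S=\{2,4\}$ are exactly the ones in which same-subalgebra centered letters become adjacent (once the intervening scalar is removed) and therefore are \emph{not} killed by alternation.

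Finally I would simplify the two surviving centered second moments via $\psi\big(b_1^\circ(b_1^*)^\circ\big)=\Var[b_1]=\psi(b_1b_1^*)-|\al|^2$ and $\psi\big(b_2^\circ(b_2^*)^\circ\big)=\Var[b_2]=\psi(b_2b_2^*)-|\be|^2$ (using $(b_1^\circ)^*=(b_1^*)^\circ$, and likewise for $b_2$). Adding the three contributions and cancelling the $|\al|^2|\be|^2$ terms gives
\begin{align*}
\psi(b_1b_2b_1^*b_2^*)=|\be|^2\psi(b_1b_1^*)+|\al|^2\psi(b_2b_2^*)-|\al|^2|\be|^2,
\end{align*}
which is precisely (\ref{Equation: *-freeness Computation 1}) after substituting $\al=\psi(b_1)$ and $\be=\psi(b_2)$. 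Once the surviving terms are correctly identified, the remaining computation is purely mechanical.
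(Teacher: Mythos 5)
Your argument is correct. Note that the paper does not prove this proposition at all — it is quoted directly from \cite{NicaSpeicher2006} (Example 5.15 (3)) — so there is no in-paper proof to compare against; what you have written is the standard derivation one finds in that reference: center each factor, expand into the sixteen terms, kill every genuinely alternating centered word by the definition of $*$-freeness (re-centering merged same-subalgebra neighbours where needed, which you correctly flag as the only delicate point, e.g.\ for $S=\{1,2,4\}$), and identify the three survivors $S=\em$, $S=\{1,3\}$, $S=\{2,4\}$ with $|\al|^2|\be|^2$, $|\be|^2\Var[b_1]$ and $|\al|^2\Var[b_2]$ respectively. The auxiliary facts you invoke ($\psi(b^*)=\overline{\psi(b)}$ from positivity, and $(b_1^\circ)^*=(b_1^*)^\circ$) are accurate, and the final bookkeeping checks out.
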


\begin{remark}
Let $(\BC,\psi)$ be a $*$-probability space,
and let $(\BC_1,\BC_2)$ be $*$-free.
If $b\in\BC_1$ is unitary and such that $\psi(b)=0$,
then it can easily be shown that $(b\BC_2b^*,\BC_2)$ is $*$-free.
Thus,
it follows from (\ref{Equation: *-freeness Computation 1}) that for every $c_1,c_2\in\BC_2$,
one has
\begin{align}\label{Equation: *-freeness Computation 2}
\psi(bc_1b^*c_2bc_1^*b^*c_2^*)=|\psi(c_1)|^2\psi(c_2c_2^*)+|\psi(c_2)|^2\psi(c_1c_1^*)-|\psi(c_1)|^2|\psi(c_2)|^2.
\end{align}
\end{remark}
Interestingly, the above elementary remark seems not to be available in the literature,
unless $b$ is assumed to be Haar unitary.


\section{Proof of Proposition \ref{Proposition: Tensor Freeness Conditions Implies Freeness}}\label{Sufficient Conditions}

In this section,
we prove that the TFC are sufficient for $\D$ to be $*$-free.
We begin with some notation to improve readability.

\begin{notation}\label{Notation: D_i}
For every $i\in I$,
let $D_i:=a_{1;i}\otimes\cdots\otimes a_{K;i}$,
that is,
$\D=(D_i:i\in I)$.
\end{notation}

As explained in the introduction to this paper,
the TFC are designed in such a way that the $*$-freeness present in a dominating collection is extended to the tensor product
collection.
We now explain this reasoning.
By definition (see Remark \ref{Remark: Centering and Freeness}),
$\D$ is $*$-free if and only if 
for every $t\in\N$,
indices $i(1)\neq\cdots\neq i(t)$,
and $*$-words $M_1,\ldots,M_t$,
one has
\begin{align}\label{Equation: Sufficient Condition eq1}
\phi\big(M_1(D_{i(1)})^\circ\cdots M_t(D_{i(t)})^\circ\big)=0.
\end{align}
Suppose that the collection $\a_1$ is $*$-free,
and that we wish to find simple conditions such that the $*$-freeness of $\a_1$ induces the $*$-freeness of $\D$.
If for every $*$-word $M$ and $i\in I$,
\begin{align}\label{Equation: Sufficient Condition eq2}
M(D_i)^\circ=M(a_{1;i})^\circ\otimes M(a_{2;i})\otimes\cdots\otimes M(a_{K;i}),
\end{align}
then
\begin{align*}
&\phi\big(M_1(D_{i(1)})^\circ\cdots M_t(D_{i(t)})^\circ\big)\\
&\qquad=\phi\left(\prod_{s=1}^t\Big(M_s(a_{1;i(s)})^\circ\otimes M_s(a_{2;i(s)})\otimes\cdots\otimes M_s(a_{K;i(s)})\Big)\right)\\
&\qquad=\phi_1\big(M_1(a_{1;i(1)})^\circ\cdots M_t(a_{1;i(t)})^\circ\big)\prod_{k=2}^K\phi_k\big(M_1(a_{k;i(1)})\cdots M_t(a_{k;i(t)})\big).
\end{align*}
Since $\a_1$ is $*$-free,
$\phi_1\big(M_1(a_{1;i(1)})^\circ\cdots M_t(a_{1;i(t)})^\circ\big)=0,$
which implies that (\ref{Equation: Sufficient Condition eq1}) holds.
Thus,
to complete the proof of Proposition \ref{Proposition: Tensor Freeness Conditions Implies Freeness},
it suffices to show that the TCF imply that (\ref{Equation: Sufficient Condition eq2}) holds.

\begin{proof}[Proof of Proposition \ref{Proposition: Tensor Freeness Conditions Implies Freeness}]
Suppose that $\D$ satisfies the TFC,
and assume without loss of generality that $\a_1$ is a dominating collection.
Let $i\in I$ and $M\in\C\langle x,x^*\rangle$ be a $*$-word.
We consider the two possible cases:
\begin{enumerate}
\item $\phi\big(M(D_i)\big)=0$; and
\item $\phi\big(M(D_i)\big)\neq0$.
\end{enumerate}
\noindent{\bf (1).}
Suppose that $\phi\big(M(D_i)\big)=0$.
Then,
$$M(D_i)^\circ=M(D_i)=M(a_{1;i})\otimes M(a_{2;i})\otimes\cdots\otimes M(a_{K;i}).$$
Since $\a_1$ is a dominating collection,
$\phi\big(M(D_i)\big)=0$ implies that $\phi_1\big(M(a_{1;i})\big)=0$,
i.e.,
$M(a_{1;i})^\circ=M(a_{1;i})$.
It is then clear that (\ref{Equation: Sufficient Condition eq2}) holds in this case.

\noindent{\bf (2).}
Suppose that $\phi\big(M(D_i)\big)\neq0$.
Since $\a_1$ is a dominating collection,
this implies that for every $k\geq 2$,
$M(a_{k;i})$ is deterministic,
hence equal to its expected value $\phi_k\big(M(a_{k;i})\big)$.
Therefore, $M(D_i)^\circ=M(D_i)-\phi\big(M(D_i)\big)$ is equal to
\begin{multline*}
M(a_{1;i})\otimes M(a_{2;i})\otimes\cdots\otimes M(a_{K;i})-\phi_1\big(M(a_{1;i})\big)\otimes M(a_{2;i})\otimes\cdots\otimes M(a_{K;i})\\
=M(a_{1;i})^\circ\otimes M(a_{2;i})\otimes\cdots\otimes M(a_{K;i}),
\end{multline*}
and thus (\ref{Equation: Sufficient Condition eq2}) also holds in this case,
concluding the proof of the proposition.
\end{proof}

\begin{remark}
If the algebras $\A_k$ are not domains (i.e., $ab=0$ implies that $a=0$ or $b=0$ for all $a,b\in\A_k$),
then the TFC and (\ref{Equation: Sufficient Condition eq1}) need not be equivalent.
However,
given our methods,
the TFC are more convenient to work with.
\end{remark}

As claimed in the introduction to this paper,
while
the TFC are sufficient for $\D$ to be $*$-free,
they are not necessary in general.
Indeed,
we can construct an example
where $\a_1$ and $\a_2$ are not $*$-free and $\D$ is $*$-free,
and thus without any dominating collection.

\begin{example}\label{Example: Tensor Freeness Conditions Are Not Necessary}
Let $\F_2=\langle g,h\rangle$ be the free group with two generators $g$ and $h$,
and let $(\Z,+)$ be the additive group on $\Z$.
Let $\A_1=\C\F_2$ and $\A_2=\C\Z$,
equip $\A_2$ with the canonical trace $\phi_2=\tau_e$,
and equip $\A_1$ with the linear extension of the map $\phi_1$ defined on $\F_2$ as follows:
\begin{align}\label{eq: phi1}
\phi_1(m)=\begin{cases}
1&\text{if }m=e;\\
\beta&\text{if $m=gh,(gh)^{-1}$; and}\\
0&\text{otherwise}
\end{cases}
\end{align}
where $\be\in\R\setminus\{0\}$.
Clearly,
$(\A_2,\phi_2)$ is a $*$-probability space.
Further,
it can be shown with straightforward computations that
if $\be$ is small enough,
then $\phi_1$ is positive and faithful on $\A_1$,
hence $(\A_1,\phi_1)$ is a $*$-probability space as well.

Since $(g,h)$ is free in $\F_2$ and of infinite order,
it follows that $\big((g,\bar{1}),(h,\bar{2})\big)$ is free in $\F_2\times\Z$
(we use $\bar{n}$ to denote integers in the additive group $(\Z,+)$
to avoid confusion with scalars in $\C$).
Therefore,
$(g\otimes\bar{1},h\otimes\bar{2})$ is $*$-free in the $*$-probability space $(\C\F_2\otimes\C\Z,\tau_e\otimes\tau_e)$
(by Proposition \ref{Proposition: Freeness in Groups vs Group Algebras}).

It can be noticed that,
when restricted to the $*$-algebra generated by $g\otimes\bar{1}$ and $h\otimes\bar{2}$,
the functional $\phi_1\otimes\phi_2$ is equal to the canonical trace,
and thus $(g\otimes\bar{1},h\otimes\bar{2})$ is $*$-free in $(\A_1\otimes\A_2,\phi_1\otimes\phi_2)$.
Therefore,
if we define $\a_1=(g,h)$ and $\a_2=(\bar{1},\bar{2})$,
then $\D=\diag(\a_1\otimes\a_2)=(g\otimes\bar{1},h\otimes\bar{2})$ is $*$-free in $(\A_1\otimes\A_2,\phi_1\otimes\phi_2)$.
However,
$\a_1$ is not $*$-free,
as $\phi_1(gh)=\beta\neq0$ while $\phi_1(g)=\phi_1(h)=0$;
and $\a_2$ is not $*$-free,
since $\phi_2(\bar{1}\cdot\bar{2}\cdot\bar{1}^*\cdot\bar{2}^*)=\phi_2(\bar{1}+\bar{2}-\bar{1}-\bar{2})=\phi_2(\bar{0})=1$,
yet $\phi_2(\bar{1})=\phi_2(\bar{2})=0$.
\end{example}

\begin{question}
One easily notices that the functional $\phi_2$ in example \ref{Example: Tensor Freeness Conditions Are Not Necessary} is not a trace.
As we were unable to find an example where the TFC fail and all the considered functionals are tracial,
it is natural to wonder if the TFC always hold with traces. 
We leave it as an open question.
\end{question}


\section{Proof of Proposition \ref{Proposition: Main Group}}\label{Section: Groups}

We begin with the following reduction.

\begin{lemma}\label{Lemma: Induction Step for Groups}
If Proposition \ref{Proposition: Main Group} holds for $K=2$,
then it also holds for any $K>2$.
\end{lemma}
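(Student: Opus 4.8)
The plan is to prove the result by induction on $K$, using the assumed $K=2$ case both as the engine for a two-block splitting and (via strong induction) as the base case, regrouping the last $K-1$ factors into a single direct-product group at each step.

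First I would fix $K>2$, assume the proposition holds for all smaller numbers of factors (in particular for $2$ and for $K-1$), and write $G_1\times\cdots\times G_K\cong G_1\times H$, where $H:=G_2\times\cdots\times G_K$. Setting $h_i:=(g_{2;i},\ldots,g_{K;i})\in H$ and $\mathbf h:=(h_i:i\in I)$, the collection $\D_\times$ becomes, under this identification, the two-factor diagonal collection $\diag(\g_1\times\mathbf h)=((g_{1;i},h_i):i\in I)$; it is free and avoids the neutral element precisely because $\D_\times$ does. Applying the $K=2$ case to $G_1\times H$ then produces $j\in\{1,2\}$ such that the corresponding block is free in its group and satisfies the associated power condition.

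If $j=1$, I would observe that the condition supplied by the base case — namely, $(g_{1;i}^n,h_i^n)\neq e$ implies $g_{1;i}^n\neq e$ — is, after unwinding $h_i^n=(g_{2;i}^n,\ldots,g_{K;i}^n)$, exactly the conclusion of the proposition with $k=1$, so nothing further is needed. The substantive case is $j=2$, where the base case gives that $\mathbf h$ is free in $H$ and that $(g_{1;i}^n,h_i^n)\neq e$ forces $h_i^n\neq e$. To invoke the inductive hypothesis on the $K-1$ factors $G_2,\ldots,G_K$ with diagonal collection $\mathbf h=\diag(\g_2\times\cdots\times\g_K)$, I need $\mathbf h$ to avoid the neutral element of $H$; I would extract this by taking $n=1$ in the power condition: since $\D_\times$ contains no neutral element, $(g_{1;i},h_i)\neq e$ for every $i$, whence $h_i\neq e$. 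With both running hypotheses verified, the inductive hypothesis yields $k\in\{2,\ldots,K\}$ for which $\g_k$ is free in $G_k$ and $h_i^n\neq e$ implies $g_{k;i}^n\neq e$.

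Finally I would chain the two power implications: if $(g_{1;i}^n,\ldots,g_{K;i}^n)=(g_{1;i}^n,h_i^n)\neq e$, then the base-case condition gives $h_i^n\neq e$, and the inductive-hypothesis condition then gives $g_{k;i}^n\neq e$, which is the required conclusion for this same $k$. I expect the one step demanding care to be the verification, in the $j=2$ case, that $\mathbf h$ genuinely satisfies the standing hypotheses of the proposition — in particular that it avoids the neutral element of $H$ — since this is exactly what licenses the recursion; everything else amounts to translating the two-factor power condition back and forth across the isomorphism $G_1\times\cdots\times G_K\cong G_1\times H$.
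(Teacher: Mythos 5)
Your proof is correct and follows essentially the same route as the paper: split off $G_1$ from $H=G_2\times\cdots\times G_K$, apply the $K=2$ case, and in the nontrivial branch invoke the inductive hypothesis for $K-1$ factors and chain the two power conditions. Your explicit check that $\mathbf h$ avoids the neutral element (via the $n=1$ instance of the base-case power condition) is a detail the paper glosses over, and it is handled correctly.
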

\begin{proof}
We proceed by induction.
Suppose that the hypotheses of Proposition \ref{Proposition: Main Group} are met,
let $K>2$,
and suppose that Proposition \ref{Proposition: Main Group} holds for $2,3,\ldots,K-1$,
and that $\D_\times$ is free.
Write the elements in the collection $\D_\times$ as
$D_i=\big(g_{1;i},(g_{2;i},\ldots,g_{K;i})\big)\in G_1\times(G_2\times\cdots\times G_K).$
Since Proposition \ref{Proposition: Main Group} holds in the case $K=2$,
\begin{enumerate}
\item $\g_1$ is free,
and for every $n\in\N$ and $i\in I$,
if $\big(g_{1;i}^n,(g_{2;i},\ldots,g_{K;i})^n\big)\neq e$,
then $g_{1;i}^n\neq e$; or
\item the collection $\diag(\g_2\times\cdots\times\g_K)$ is free,
and for every $i\in I$ and $n\in\N$,
if $\big(g_{1;i}^n,(g_{2;i},\ldots,g_{K;i})^n\big)\neq e$,
then $(g_{2;i},\ldots,g_{K;i})^n\neq e$.
\end{enumerate}
If case $(1)$ holds,
then the result is proved.
Suppose that case $(2)$ holds.
Then,
since Proposition \ref{Proposition: Main Group} is true for $K-1$,
the freeness of $\diag(\g_2\times\cdots\times\g_K)$ implies that
there exists
an integer $2\leq k\in \{1, \ldots , K\}$ such that $\g_k$ is free; and
for every $n\in\N$ and $i\in I$,
if $(g_{2;i}^n,\ldots,g_{K;i}^n)\neq e$,
then $g_{k,i}^n\neq e$.
Given that $\big(g_{1;i}^n,(g_{2;i},\ldots,g_{K;i})^n\big)\neq e$ implies that $(g_{2;i},\ldots,g_{K;i})^n\neq e$ in the present case,
the proof is complete.
\end{proof}

Thus,
we need only prove Proposition \ref{Proposition: Main Group} for $K=2$.
As stated in the introduction to this paper
(more precisely,
equation (\ref{Equation: Nontrivial Relations in Direct Products})),
Proposition \ref{Proposition: Main Group} can be explained by the fact that
the group operation in a direct product can induce nontrivial relations
that prevent the freeness of $\D_\times$.
The next lemma makes this claim precise.
(As in the previous section,
we use the notation $D_i=(g_{1;i},g_{2;i})$.)

\begin{lemma}\label{Lemma: Direct Product Induces Relations}
Suppose that $\D_\times$ does not contain the neutral element,
and that there exists $m,n\in\N$ and $i\in I$ such that
$g_{1;i}^m=e\neq g_{2;i}^m$ and $g_{1;i}^n\neq e=g_{2;i}^n$.
Then,
$\D_\times$ is not free.
\end{lemma}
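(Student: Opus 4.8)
The plan is to exploit the mechanism recorded in (\ref{Equation: Nontrivial Relations in Direct Products}): in a direct product, elements supported in different coordinates commute automatically, and it is exactly this forced commutation that obstructs freeness. The first step is to use the two hypotheses to manufacture, out of powers of $D_i$, one group element that is trivial in the first coordinate and one that is trivial in the second. Writing $g:=g_{1;i}^n$ and $h:=g_{2;i}^m$, the equalities $g_{2;i}^n=e$ and $g_{1;i}^m=e$ give
\begin{align*}
D_i^n=(g_{1;i}^n,g_{2;i}^n)=(g,e),\qquad D_i^m=(g_{1;i}^m,g_{2;i}^m)=(e,h),
\end{align*}
while the inequalities $g_{1;i}^n\neq e$ and $g_{2;i}^m\neq e$ guarantee that $g\neq e$ and $h\neq e$, so that both pure-coordinate elements are genuinely nontrivial.

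The second step is to turn the commutativity of these two elements into a relation that violates freeness. Since $(g,e)$ and $(e,h)$ lie in different coordinates they commute, so
\begin{align*}
(g,e)(e,h)(g,e)^{-1}(e,h)^{-1}=e,
\end{align*}
which is precisely an instance of (\ref{Equation: Nontrivial Relations in Direct Products}). I would then read the left-hand side as a word in the elements of $\D_\times$, built from the nontrivial pure-coordinate powers produced above, and appeal to the reduced-word characterization of freeness recalled after Proposition \ref{Proposition: Freeness in Groups vs Group Algebras}: a free collection admits no nontrivial reduced word equal to $e$. If this commutator can be presented as an alternating word whose syllables are nonzero powers of generators of $\D_\times$, then it is reduced and yet equals $e$, so the canonical map from the free product of cyclic groups fails to be injective and $\D_\times$ is not free.

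I expect the main obstacle to be precisely the bookkeeping in this last step, namely ensuring that the commutator really is a \emph{nontrivial reduced word} and not a vacuous identity. This is the delicate point, because a commutator of two powers of a single generator collapses to the identity for trivial reasons and carries no information; what is genuinely needed is that the $G_1$-supported factor $(g,e)$ and the $G_2$-supported factor $(e,h)$ appear as non-collapsing syllables attached to \emph{distinct} generators, so that the length-four word alternates between different indices and cannot be shortened. The two crossing hypotheses are exactly what provide the two opposite degeneracies required for this: one produces a nontrivial element living only in the first coordinate and the other a nontrivial element living only in the second. I would therefore carefully isolate these two pure-coordinate elements as powers of distinct generators of $\D_\times$, verify that each of the four syllables is a nonzero power and that consecutive indices differ, and only then conclude—via the reduced-word criterion—that the resulting nontrivial word equals $e$, contradicting the freeness of $\D_\times$.
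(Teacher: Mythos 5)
Your reduction to the two pure-coordinate elements $D_i^n=(g,e)$ and $D_i^m=(e,h)$ is the right starting point, and you have correctly identified the delicate issue: a commutator of two powers of a single generator is vacuously trivial. But your proposed resolution --- to ``isolate these two pure-coordinate elements as powers of \emph{distinct} generators of $\D_\times$'' --- cannot be carried out, because both $(g,e)=D_i^n$ and $(e,h)=D_i^m$ are, by construction, powers of the \emph{same} generator $D_i$; the hypotheses of the lemma involve only the single index $i$. The word $D_i^nD_i^mD_i^{-n}D_i^{-m}$ reduces to $D_i^0=e$ already inside the cyclic group $\langle D_i\rangle$, hence inside the free product $*_{i\in I}\langle D_i\rangle$ as well, so it is not a nontrivial reduced word and yields no contradiction with freeness. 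As written, the proof stops exactly at the obstacle it diagnoses.

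The missing idea is to bring in an auxiliary generator. Pick $j\in I$ with $j\neq i$ and consider
\[
W \;=\; D_i^m\,D_j\,D_i^n\,D_j^{-1}\,D_i^{-m}\,D_j\,D_i^{-n}\,D_j^{-1}.
\]
Coordinate-wise, $g_{1;i}^m=e$ kills every occurrence of $D_i^{\pm m}$ in the first coordinate and $g_{2;i}^n=e$ kills every occurrence of $D_i^{\pm n}$ in the second, so both coordinates of $W$ telescope to $e$ and $W=e$. Conceptually, $W$ is the commutator of $(e,h)$ with the conjugate $D_jD_i^nD_j^{-1}$, which still lies in $G_1\times\{e\}$ and therefore commutes with $(e,h)$ --- this is the mechanism of (\ref{Equation: Nontrivial Relations in Direct Products}), but now realized by a word that genuinely alternates between two indices. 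Since $D_i^m\neq e$, $D_i^n\neq e$, and $D_j\neq e$ (the last by the standing assumption that $\D_\times$ avoids the neutral element), $W$ is a reduced alternating word in the generators $D_i$ and $D_j$, and freeness would force $W\neq e$; this contradiction is the paper's proof. Note that the argument silently requires $|I|\geq 2$ so that such a $j$ exists.
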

\begin{proof}
Let $j\in I$ be such that $i\neq j$.
Since $D_i^n,D_i^m,D_{j}\neq e$,
if $\D_\times$ is free,
then
$D_i^mD_{j} D_i^nD_{j}^{-1}D_i^{-m}D_{j} D_i^{-n}D_{j}^{-1}\neq e$.
A direct computation shows that this is not the case,
as this expression reduces to the identity by definition of the group operation in a direct product.
Therefore,
$\D_\times$ is not free.
\end{proof}

Let
$F=\big\langle (g_{1;i},g_{2;i}):i\in I\big\rangle$ be the group generated by the collection $\D_\times$.
If we assume that $\D_\times$ is free,
then $F$ is canonically isomorphic to the free product of cyclic groups
$*_{i\in I}\big\langle(g_{1;i},g_{2;i})\big\rangle.$
For $k=1,2$,
let $\pi_k$ be the projection homomorphism from $F$ to the group $G_k$,
that is,
$\pi_k\big((g_1,g_2)\big)=g_k$.
Since $\ker(\pi_1)\subset\{e\}\times G_2$ and $\ker(\pi_2)\subset G_1\times\{e\},$
it follows that $\ker(\pi_1)$ and $\ker(\pi_2)$ commute and that $\ker(\pi_1)\cap\ker(\pi_2)=\{e\}$.
The next lemma provides a convenient way of establishing Proposition \ref{Proposition: Main Group} using projection homomorphisms.

\begin{lemma}\label{Lemma: Kernel Criterion}
Suppose that $\D_\times$ is free.
If there exists $k\in\{1,2\}$ such that $\ker(\pi_k)=\{e\}$,
then $\g_k$ is free,
and for every $n\in\N$ and $i\in I$,
if $(g_{1;i}^n,g_{2;i}^n)\neq e$,
then $g_{k;i}^n\neq e$.
\end{lemma}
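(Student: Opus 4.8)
The plan is to assume without loss of generality that $\ker(\pi_1)=\{e\}$, so that $\pi_1:F\to G_1$ is injective, and then to deduce both conclusions by transporting statements between $\g_1$ and $\D_\times$ along $\pi_1$, using the freeness of $\D_\times$.

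First I would dispose of the order condition by contraposition. Fixing $n\in\N$ and $i\in I$ with $g_{1;i}^n=e$, the element $D_i^n=(g_{1;i}^n,g_{2;i}^n)\in F$ satisfies $\pi_1(D_i^n)=g_{1;i}^n=e$, hence $D_i^n\in\ker(\pi_1)=\{e\}$; thus $D_i^n=e$, which forces $g_{2;i}^n=e$ and so $(g_{1;i}^n,g_{2;i}^n)=e$. This is precisely the contrapositive of the implication ``$(g_{1;i}^n,g_{2;i}^n)\neq e\Rightarrow g_{1;i}^n\neq e$,'' so the order condition for $k=1$ follows.

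Next I would establish that $\g_1$ is free, using the reduced-word characterization of freeness recalled after Proposition \ref{Proposition: Freeness in Groups vs Group Algebras}. Given indices $i(1)\neq\cdots\neq i(t)$ and exponents $n(1),\ldots,n(t)\in\Z$ with $g_{1;i(l)}^{n(l)}\neq e$ for each $l$, the goal is to show $g_{1;i(1)}^{n(1)}\cdots g_{1;i(t)}^{n(t)}\neq e$. I would lift this to $W=D_{i(1)}^{n(1)}\cdots D_{i(t)}^{n(t)}\in F$ and observe that each factor is nontrivial, since the first coordinate of $D_{i(l)}^{n(l)}$ is $g_{1;i(l)}^{n(l)}\neq e$. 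As $i(1)\neq\cdots\neq i(t)$ and $\D_\times$ is free, the reduced-word criterion applied to $\D_\times$ gives $W\neq e$; since $\pi_1$ is injective and $\pi_1(W)=g_{1;i(1)}^{n(1)}\cdots g_{1;i(t)}^{n(t)}$, the desired product is nontrivial.

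The argument is genuinely short, and the only step needing care is verifying that the lifted word $W$ is reduced in $*_{i\in I}\langle D_i\rangle$ --- that is, that no factor $D_{i(l)}^{n(l)}$ collapses to the identity --- which is exactly what the nontriviality of the $G_1$-coordinate $g_{1;i(l)}^{n(l)}$ guarantees. Once this bookkeeping is settled, the freeness of $\D_\times$ together with the injectivity of $\pi_1$ immediately yields both the order condition and the freeness of $\g_1$.
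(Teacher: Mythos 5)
Your proposal is correct and follows essentially the same route as the paper's proof: both use the triviality of $\ker(\pi_k)$ to transfer nontriviality of the reduced word $D_{i(1)}^{n(1)}\cdots D_{i(t)}^{n(t)}$ (guaranteed by the freeness of $\D_\times$) down to the product $g_{k;i(1)}^{n(1)}\cdots g_{k;i(t)}^{n(t)}$, and handle the order condition by the same kernel argument (you state it contrapositively, the paper directly). Your explicit remark that $D_{i(l)}^{n(l)}\neq e$ because its $k$-th coordinate is nontrivial is a small bookkeeping point the paper leaves implicit.
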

\begin{proof}
Suppose that $\D_\times$ is free,
and that $\ker(\pi_k)=\{e\}$ for some $k\in\{1,2\}$.
If $(g_{1;i}^n,g_{2;i}^n)\neq e$,
then $(g_{1;i}^n,g_{2;i}^n)\not\in\ker(\pi_k)$,
hence $g_{k;i}^n\neq e$.
Thus,
it only remains to prove that $\g_k$ is free.
Let $i(1)\neq i(2)\neq \cdots\neq i(t)$ and $n(1),\ldots,n(t)\in\Z$ be such that
$g_{k;i(l)}^{n(l)}\neq e$ for all $l\leq t$.
Since $D_{i(l)}^{n(l)}\neq e$ for all $l\leq t$,
the freeness of $\D_\times$ implies that $D_{i(1)}^{n(1)}\cdots D_{i(t)}^{n(t)}\neq e$.
Thus,
$D_{i(1)}^{n(1)}\cdots D_{i(t)}^{n(t)}\not\in\ker(\pi_k)$,
which implies that $g_{k;i(1)}^{n(1)}\cdots g_{k;i(t)}^{n(t)}\neq e$,
and hence $\g_k$ is free.
\end{proof}

\begin{proof}[Proof of Proposition \ref{Proposition: Main Group}]
Assume for contradiction that $\ker(\pi_k)\neq\{e\}$ for both $k=1,2$.
Then,
$\ker(\pi_1)\not\subset\ker(\pi_2)$ and $\ker(\pi_2)\not\subset\ker(\pi_1)$,
otherwise $\ker(\pi_1)\cap\ker(\pi_2)\neq\{e\}$.
Thus,
there exists $x,y\neq e$ such that
$x\in\ker(\pi_1)\setminus\ker(\pi_2)$ and $y\in\ker(\pi_2)\setminus\ker(\pi_1)$.
Since $\ker(\pi_1)$ and $\ker(\pi_2)$ commute,
$x$ and $y$ commute in a free product.
Therefore,
$x,y\in\langle w\rangle$ for some $w\in F$,
or $x,y\in z\big\langle (g_{1;i},g_{2;i})\big\rangle z^{-1}$ for some $z\in F$ and $i\in I$ (see \cite{MagnusKarrasSolitar1966} Corollary 4.1.6).

Suppose that $x,y\in\langle w\rangle$ for some $w\in F$.
Then,
there exists $m,n\in\N$ such that $x=w^m$ and $y=w^n$.
Since $x\in\ker(\pi_1)$ and $y\in\ker(\pi_2)$,
it follows that $w^{mn}=e$,
i.e., $w$ is of finite order in a free product.
Therefore,
$w\in z\big\langle(g_{1;i},g_{2;i})\big\rangle z^{-1}$ for some $z\in F$ and $i\in I$ (see \cite{MagnusKarrasSolitar1966} Corollary 4.1.4).
Consequently,
$x\in\ker(\pi_1)\setminus\ker(\pi_2)$ and $y\in\ker(\pi_2)\setminus\ker(\pi_1)$ implies in all cases that
$x,y\in z\big\langle(g_{1;i},g_{2;i})\big\rangle z^{-1}$ for some $z\in F$ and $i\in I$.

Let $m,n\in\Z\setminus\{0\}$ be such that $x=z(g_{1;i},g_{2;i})^mz^{-1}$ and $y=z(g_{1;i},g_{2;i})^nz^{-1}$,
that is,
$(g_{1;i},g_{2;i})^m=z^{-1}xz$ and $(g_{1;i},g_{2;i})^n=z^{-1}yz$.
As $\ker(\pi_1)$ and $\ker(\pi_2)$ are normal subgroups,
$(g_{1;i},g_{2;i})^m\in\ker(\pi_1)\setminus\ker(\pi_2)$ and $(g_{1;i},g_{2;i})^n\in\ker(\pi_2)\setminus\ker(\pi_1)$.
According to Lemma \ref{Lemma: Direct Product Induces Relations},
this contradicts that $\D_\times$ is free,
hence $\ker(\pi_k)=\{e\}$ for at least one $k$,
which concludes the proof by Lemma \ref{Lemma: Kernel Criterion}.
\end{proof}


\section{Proof of Theorem \ref{Theorem: Main Theorem}}\label{Section: Main Result}

Given that the $*$-freeness of a collection of variables is unaffected by
scaling with nonzero constants,
we assume without loss of generality
that $\phi_k(a_{k;i}a_{k;i}^*)=\phi_k(a_{k;i}^*a_{k;i})=1$ for every $i\in I$ and $k\in \{1, \ldots , K\}$.
Since the $\phi_k$ are traces,
this amounts to multiplying $a_{k;i}$ by
$\big(\phi_k(a_{k;i}a_{k;i}^*)\big)^{-1/2}$ for all $k\in \{1, \ldots , K\}$ and $i\in I$.
Under these assumptions (where constant multiples of unitary variables become unitary variables),
claims (1), (2) and (3) in Theorem \ref{Theorem: Main Theorem} can be rephrased as follows.
\begin{enumerate}[$\qquad(1)'$]
\item For all $k\in \{1, \ldots , K\}$ except at most one,
every variable in $\a_k$ is unitary.
\item If there exists $k\in \{1, \ldots , K\}$ such that $\a_k$ contains a non-unitary variable,
then the TFC are satisfied with $\a_k$ as a dominating collection.
\item Suppose that $\a_1,\ldots,\a_k$ only contain unitary variables.
If there exists $k\in \{1, \ldots , K\}$,
$m\in\N$,
and $i\in I$ such that $\phi(D_i^m)\neq0$ and $a_{k;i}^m\neq\phi_k(a_{k;i}^m)$,
then $\D$ satisfies the TFC and $\a_k$ is a dominating collection.
\end{enumerate}

\begin{remark}
In contrast with the case of group algebras (see Lemma \ref{Lemma: Induction Step for Groups}),
it is not clear if
the proof of Theorem \ref{Theorem: Main Theorem} for an arbitrary $K\in\N$ can be reduced by induction to the case $K=2$,
especially since the $*$-freeness of a tensor product family $\diag(\a_{k(1)}\otimes\cdots\otimes\a_{k(t)})$
need not imply that one of the factor families $\a_{k(l)}$ is $*$-free
(see Example \ref{Example: Tensor Freeness Conditions Are Not Necessary}).
\end{remark}

\subsection{General Strategy of Proof}\label{Section: General Strategy of Proof}

The core idea behind the proof of Theorem \ref{Theorem: Main Theorem} is the following observation:
let $M\in\C\big\langle x_i,x_i^*:i\in I\rangle$ be a $*$-word in noncommuting indeterminates $x_i$ and $x_i^*$ ($i\in I$).
Then,
$M$ factors in the tensor product as
$$M(D_i:i\in I)
=M(a_{1;i}:i\in I)\otimes\cdots\otimes M(a_{K;i}:i\in I),$$
and thus,
by tensor (classical) independence,
the expected value also factors in the tensor product as
\begin{align}\label{Equation: Method of Proof 1}
\phi\big(M(D_i:i\in I)\big)=\phi_1\big(M(a_{1;i}:i\in I)\big)\cdots\phi_K\big(M(a_{K;i}:i\in I)\big).
\end{align}
As we have assumed that the collections $\a_1,\ldots,\a_K$ are $*$-free,
the right-hand side of (\ref{Equation: Method of Proof 1}) can be reduced
as an expression that only depends on the distributions of the variables in the collections $\a_k$
according to the rules of $*$-freeness.
If it is also assumed that $\D$ is $*$-free,
then the left-hand side of $(\ref{Equation: Method of Proof 1})$ can be reduced
as an expression that only depends on the distributions of the variables in the tensor product collection $\D$.
Consider for example the following computation,
which we will repeatedly use in this section.

\begin{example}\label{Example: Strategy}
Let $M,N\in\C\langle x,x^*\rangle$ be $*$-words and $i,j\in I$ be distinct indices.
A special case of
(\ref{Equation: Method of Proof 1}) is the following:
\begin{align}\label{Equation: Method of Proof 2}
\phi\big(M(D_i)N(D_j)M(D_i)^*N(D_j)^*\big)=\prod_{k=1}^K\phi_k\big(M(a_{k;i})N(a_{k;j})M(a_{k;i})^*N(a_{k;j})^*\big).
\end{align}
If $\a_1,\ldots,\a_K$ and $\D$ are $*$-free,
then applying (\ref{Equation: *-freeness Computation 1}) to the right-hand side and the left-hand side of the above equation yields
(we use $M_{k;i}:=M(a_{k;i})$ and $N_{k;j}:=N(a_{k;j})$ to alleviate notation)
\begin{multline}\label{Equation: Method of Proof 3}
\prod_{k=1}^K\big|\phi_k\big(M_{k;i}\big)\big|^2\phi_k\big(N_{k;j}N_{k;j}^*\big)
+\prod_{k=1}^K\phi_k\big(M_{k;i}M_{k;i}^*\big)\big|\phi_k\big(N_{k;j}\big)\big|^2
-\prod_{k=1}^K\big|\phi_k\big(M_{k;i}\big)\big|^2\big|\phi_k\big(N_{k;j}\big)\big|^2\\
=\prod_{k=1}^K\Big(\big|\phi_k\big(M_{k;i}\big)\big|^2\phi_k\big(N_{k;j}N_{k;j}^*\big)
+\phi_k\big(M_{k;i}M_{k;i}^*\big)\big|\phi_k\big(N_{k;j}\big)\big|^2
-\big|\phi_k\big(M_{k;i}\big)\big|^2\big|\phi_k\big(N_{k;j}\big)\big|^2\Big).
\end{multline}
\end{example}

The crucial observation we make is that,
in general (i.e., if $\D$ is not assumed to be $*$-free),
equations such as (\ref{Equation: Method of Proof 3}) need not hold.
Thus,
if we suppose that $\D$ is $*$-free,
then equation (\ref{Equation: Method of Proof 1}) with different choices of $*$-words
offers
in principle
infinitely many necessary conditions for the $*$-freeness of $\D$ under the assumption that $\a_1,\ldots,\a_K$ are $*$-free.
Equation (\ref{Equation: Method of Proof 3}) is an example of such a necessary condition.

\subsection{Theorem \ref{Theorem: Main Theorem} Claim $(1)'$}\label{Sub: At Most One Non-Unitary}

Let $b$ be a random variable in a $*$-probability space $(\BC,\psi)$ where $\psi$ is a faithful trace,
and assume that $\psi(bb^*)=\psi(b^*b)=1$.
To show that $b$ is unitary (i.e., $b^*b=bb^*=1=\phi(bb^*)$),
it suffices to prove that 
$\Var[bb^*]=\psi\big((bb^*)^2\big)-1=0,$
that is,
$\psi\big((bb^*)^2\big)=1$
(see Proposition \ref{Proposition: Variance with Faithful Functionals}).
Thus,
claim $(1)'$ of Theorem \ref{Theorem: Main Theorem} will be proved if it is shown that
for all $k\in \{1, \ldots , K\}$ except at most one,
$\phi_k\big((a_{k;i}a_{k;i}^*)^2\big)=1$ for all $i\in I$.

\begin{proof}[Proof of Theorem \ref{Theorem: Main Theorem} Claim $(1)'$]
If $\phi_k\big((a_{k;i}a_{k;i}^*)^2\big)=1$ for all $k\in \{1, \ldots , K\}$ and $i\in I$,
then the result trivially holds.
Thus,
assume without loss of generality that there exists $i\in I$ such that $\phi_1\big((a_{1;i}a_{1;i}^*)^2\big)\neq1$.
We prove that for every $k\in\{2,\ldots,K\}$ and $j\in I$,
one has $\phi_k\big((a_{k;j}a_{k;j}^*)^2\big)=1$.

Let $j\in I\setminus\{i\}$ be arbitrary.
According to equation
(\ref{Equation: Method of Proof 3})
with $M$ and $N$ defined as $M(x)=N(x)=xx^*$,
if $\D$ is $*$-free,
then (recall that $\phi_k(a_{k;i}a_{k;i}^*)=\phi_k(a_{k;j}a_{k;j}^*)=1$ for all $k\in \{1, \ldots , K\}$)
\begin{multline}\label{Equation: Main Theorem Step 1 eq2}
\prod_{k=1}^K\phi_k\big((a_{k;j}a_{k;j}^*)^2\big)+\prod_{k=1}^K\phi_k\big((a_{k;i}a_{k;i}^*)^2\big)-1\\
=\prod_{k=1}^K\Big(\phi_k\big((a_{k;j}a_{k;j}^*)^2\big)+\phi_k\big((a_{k;i}a_{k;i}^*)^2\big)-1\Big).
\end{multline}
Notice that,
according to equation (\ref{Equation: Norm of Element Condition}),
$1\leq\phi_k\big((a_{k;i}a_{k;i}^*)^2\big),\phi_k\big((a_{k;j}a_{k;j}^*)^2\big)$ for all $k\in \{1, \ldots , K\}$.
Whenever real numbers $1\leq x_1,\ldots,x_K,y_1,\ldots,y_K$ are such that
$$\prod_{k=1}^Kx_k+\prod_{k=1}^Ky_k-1=\prod_{k=1}^K(x_k+y_k-1),$$
it follows that $(x_k-1)(y_l-1)=0$ for every distinct $k,l\leq K$ (see Proposition \ref{Proposition: Polynomial Expansion 5}).
Applying this to equation (\ref{Equation: Main Theorem Step 1 eq2}) implies that for all $k\in\{2,\ldots,K\}$,
\begin{align}\label{Equation: Main Theorem Step 1 eq1}
\Big(\phi_k\big((a_{k;j}a_{k;j}^*)^2\big)-1\Big)\Big(\phi_1\big((a_{1;i}a_{1;i}^*)^2\big)-1\Big)=0.
\end{align}
As $\phi_1\big((a_{1;i}a_{1;i}^*)^2\big)\neq1$,
we conclude that $\phi_k\big((a_{k;j}a_{k;j}^*)^2\big)=1$
for all $k\in\{2,\ldots,K\}$ and $j\in I\setminus\{i\}$.

In order to complete the present proof,
it only remains to show that $\phi_k\big((a_{k;i}a_{k;i}^*)^2\big)=1$ for all $k\in\{2,\ldots,K\}$.
For this purpose,
fix $j\in I\setminus\{i\}$.
As shown in the previous paragraph,
$\phi_k\big((a_{k;j}a_{k;j}^*)^2\big)=1$
for all $k\in\{2,\ldots,K\}$.
We divide the remainder of this proof into the following cases:
\begin{enumerate}
\item $\phi_1\big((a_{1;j}a_{1;j}^*)^2\big)\neq1$ (i.e., $a_{1;j}$ is not unitary); and
\item $\phi_1\big((a_{1;j}a_{1;j}^*)^2\big)=1$ (i.e., $a_{1;j}$ is unitary).
\end{enumerate}
\noindent{\bf (1).}
Suppose that $\phi_1\big((a_{1;j}a_{1;j}^*)^2\big)\neq1$.
Then,
it follows from (\ref{Equation: Main Theorem Step 1 eq2}) that
$$\Big(\phi_1\big((a_{1;j}a_{1;j}^*)^2\big)-1\Big)\Big(\phi_k\big((a_{k;i}a_{k;i}^*)^2\big)-1\Big)=0$$
for all $k\in\{2,\ldots,K\}$,
which implies that $\phi_k\big((a_{k;i}a_{k;i}^*)^2\big)=1$,
as desired.

\noindent{\bf (2).}
Suppose that $\phi_1\big((a_{1;j}a_{1;j}^*)^2\big)=1$.
This implies in particular that $D_j$ is unitary,
hence (\ref{Equation: Main Theorem Step 1 eq2}) gives no useful information.
Instead, we apply equation (\ref{Equation: Method of Proof 3}) with $M(x)=xx^*$ and $N(x)=x$,
which yields
\begin{multline}\label{Equation: Main Theorem Step 1 eq5}
1+\prod_{k=1}^{K}\phi_k\big((a_{k;i}a_{k;i}^*)^2\big)|\phi_k(a_{k;j})|^2-\prod_{k=1}^{K}|\phi_k(a_{k;j})|^2\\
=\prod_{k=1}^{K}\Big(1+\phi_k\big((a_{k;i}a_{k;i}^*)^2\big)|\phi_k(a_{k;j})|^2-|\phi_k(a_{k;j})|^2\Big).
\end{multline}
Notice that
(\ref{Equation: Norm of Element Condition})
implies that for all $k\in \{1, \ldots , K\}$,
one has $1\leq\phi_k\big((a_{k;i}a_{k;i}^*)^2\big)$ and $0\leq |\phi_k(a_{k;j})|^2\leq 1$.
Given real numbers $1\leq x_1,\ldots,x_n$ and $0\leq t_1,\ldots,t_n\leq 1$ such that
$$1+\prod_{k=1}^Kx_kt_k-\prod_{k=1}^Kt_k=\prod_{k=1}^K\big(1+x_kt_k-t_k\big),$$
it follows
from Proposition \ref{Proposition: Polynomial Expansion 3} that for every $k\in \{1, \ldots , K\}$,
$$\left(t_k-\prod_{l=1}^Kt_l\right)(x_k-1)=0.$$
Applying this to (\ref{Equation: Main Theorem Step 1 eq5}) yields
\begin{align}\label{Equation: Main Theorem Step 1 eq4}
\Big(|\phi_k(a_{k;j})|^2-|\phi(D_j)|^2\Big)\Big(\phi_k\big((a_{k;i}a_{k;i}^*)^2\big)-1\Big)=0
\end{align}
for all $k\in \{1, \ldots , K\}$.
We divide the remainder of the proof of this case in two sub-cases:
\begin{enumerate}[\qquad(\text{2.}1)]
\item $\phi(D_j)\neq0$; and
\item $\phi(D_j)=0$.
\end{enumerate} 
\noindent{\bf (2.1).}
Suppose that $\phi(D_j)\neq0$.
According to (\ref{Equation: Main Theorem Step 1 eq4}),
one has
$$\Big(|\phi_1(a_{1;j})|^2-|\phi(D_j)|^2\Big)\Big(\phi_1\big((a_{1;i}a_{1;i}^*)^2\big)-1\Big)=0.$$
Since
$\phi_1\big((a_{1;i}a_{1;i}^*)^2\big)\neq1$,
this means that
$$|\phi(D_j)|^2=\prod_{k=1}^K|\phi_k(a_{k;j})|^2=|\phi_1(a_{1;j})|^2.$$
Since $0\leq |\phi_k(a_{k;j})|^2\leq 1$ for all $k\in \{1, \ldots , K\}$,
this implies that $|\phi_k(a_{k;j})|^2=1$ for all $k\in\{2,\ldots,K\}$.
Since it is assumed that $\D$ does not contain constant multiples of $1$, and $\phi(D_jD_j^*)=1$,
it cannot be the case that $|\phi(D_j)|=1$ (see Proposition \ref{Proposition: Unitary Equal to Unit}).
Therefore,
$1\neq|\phi_1(a_{1;j})|^2=|\phi(D_j)|^2.$
This then means that
$|\phi_k(a_{k;j})|^2\neq|\phi(D_j)|^2$ for all $k\in\{2,\ldots,K\}$,
hence $\phi_k\big((a_{k;i}a_{k;i}^*)^2\big)=1$ by (\ref{Equation: Main Theorem Step 1 eq4}).

\noindent{\bf (2.2).}
Suppose that $\phi(D_j)=0$.
Then,
(\ref{Equation: Main Theorem Step 1 eq4}) becomes
$$|\phi_k(a_{k;j})|^2\Big(\phi_k\big((a_{k;i}a_{k;i}^*)^2\big)-1\Big)=0.$$
Thus,
we see that for all $k\in \{1, \ldots , K\}$,
\begin{align}\label{Equation: Main Theorem Step 1 eq6}
\text{if }\phi_k(a_{k;j})\neq0\text{, then }\phi_k\big((a_{k;i}a_{k;i}^*)^2\big)=1.
\end{align}
Let us define $\U=\big\{k\in \{1, \ldots , K\}:\phi_k\big((a_{k;i}a_{k;i}^*)^2\big)\neq1\big\}$.
We have assumed that $1\in\U$ in the beginning of this proof,
and we want to show that $2,\ldots,K\not\in\U$.
According to equation (\ref{Equation: Method of Proof 1}),
we have that
\begin{multline}\label{Equation: Main Theorem Step 1 eq7}
\phi\Big(D_j(D_iD_i^*)D_j^*(D_iD_i^*)D_j(D_iD_i^*)D_j^*(D_iD_i^*)\Big)\\
=\prod_{k=1}^K\phi_k\Big(a_{k;j}(a_{k;i}a_{k;i}^*)a_{k;j}^*(a_{k;i}a_{k;i}^*)a_{k;j}(a_{k;i}a_{k;i}^*)a_{k;j}^*(a_{k;i}a_{k;i}^*)\Big).
\end{multline}
On the one hand,
the fact that $D_j$ is unitary and $\phi(D_j)=0$ implies by (\ref{Equation: *-freeness Computation 2}) that
\begin{align*}
\phi\Big(D_j(D_iD_i^*)D_j^*(D_iD_i^*)D_j(D_iD_i^*)D_j^*(D_iD_i^*)\Big)
&=2|\phi(D_iD_i^*)|^2\phi\big((D_iD_i^*)^2\big)-|\phi(D_iD_i^*)|^4\\
&=2\prod_{k\in\U}\phi_k\big((a_{k;i}a_{k;i}^*)^2\big)-1.
\end{align*}
On the other hand,
if $k\not\in\U$,
then (since $a_{k;i}$ is unitary)
$$\phi_k\Big(a_{k;j}(a_{k;i}a_{k;i}^*)a_{k;j}^*(a_{k;i}a_{k;i}^*)a_{k;j}(a_{k;i}a_{k;i}^*)a_{k;j}^*(a_{k;i}a_{k;i}^*)\Big)
=\phi_k(a_{k;j}a_{k;j}^*a_{k;j}a_{k;j}^*)=1,$$
and if $k\in\U$,
then the fact that $a_{k;j}$ is unitary and $\phi_k(a_{k;j})=0$ (according to (\ref{Equation: Main Theorem Step 1 eq6}))
implies by (\ref{Equation: *-freeness Computation 2}) that
$$\phi_k\Big(a_{k;j}(a_{k;i}a_{k;i}^*)a_{k;j}^*(a_{k;i}a_{k;i}^*)a_{k;j}(a_{k;i}a_{k;i}^*)a_{k;j}^*(a_{k;i}a_{k;i}^*)\Big)=
2\phi_k\big((a_{k;i}a_{k;i}^*)^2\big)-1.$$
Thus,
(\ref{Equation: Main Theorem Step 1 eq7}) yields that
$$2\prod_{k\in\U}\phi_k\big((a_{k;i}a_{k;i}^*)^2\big)-1=\prod_{k\in\U}\Big(2\phi_k\big((a_{k;i}a_{k;i}^*)^2\big)-1\Big).$$
According to Proposition \ref{Proposition: Polynomial Expansion 5},
this equation cannot hold if $\U$ contains more than one element.
It then follows that $k\not\in\U$ for all $k\in\{2,\ldots,K\}$,
which concludes the proof.
\end{proof}

\subsection{Theorem \ref{Theorem: Main Theorem} Claim $(2)'$}\label{Sub: Tensor Freeness Conditions}

Assume without loss of generality that $\a_1$ contains at least one variable that is not unitary,
and hence $\a_2,\ldots,\a_K$ only contains unitary variables by Theorem \ref{Theorem: Main Theorem} Claim $(1)'$.
We must prove that the TFC hold with $\a_1$ as a dominating collection.

Let $b$ be a random variable in a $*$-probability space $(\BC,\psi)$ where $\psi$ is a faithful trace,
and assume that $\psi(bb^*)=1$.
To show that $b$ is deterministic (i.e., $b=\psi(b)$),
it suffices to prove that
$0=\Var[b]=\psi(bb^*)-\psi(b)\psi(b^*)=1-|\psi(b)|^2,$
that is,
$|\psi(b)|^2=1$ (see Proposition \ref{Proposition: Unitary Equal to Unit}).
Thus,
if we assume that
$\a_1$ contains non-unitary variables and that $\a_2,\ldots,\a_K$ contain unitary variables only,
the TFC hold with $\a_1$ as a dominating collection if and only if for every $i\in I$ and $*$-word $M$,
one has
\begin{align}\label{Equation: Main Theorem Step 2 eq0}
\big|\phi\big(M(D_i)\big)\big|^2=\prod_{k=1}^K\big|\phi_k\big(M(a_{k;i})\big)\big|^2=\big|\phi_1\big(M(a_{1;i})\big)\big|^2.
\end{align}
In order to isolate $\a_1$ to obtain (\ref{Equation: Main Theorem Step 2 eq0}),
we make use of the fact that $\a_1$ is the only collection containing non-unitary variables,
as illustrated in the following example.

\begin{example}\label{Example: Isolate Non Unitary}
Let $M$ be an arbitrary $*$-word.
Equation (\ref{Equation: Method of Proof 2}) yields
$$\phi\big((D_iD_i^*)M(D_j)(D_iD_i^*)M(D_j)^*\big)=\prod_{k=1}^K\phi_k\big((a_{k;i}a_{k;i}^*)M(a_{k;j})(a_{k;i}a_{k;i}^*)M(a_{k;j})^*\big).$$
Looking at the right-hand side of the above equation,
we notice that,
for $k=2,\ldots,K$,
the fact that $a_{k;i}$ and $a_{k;j}$ are unitary implies that
$$\phi_k\big((a_{k;i}a_{k;i}^*)M(a_{k;j})(a_{k;i}a_{k;i}^*)M(a_{k;j})^*\big)=\phi_k\big(M(a_{k;j})M(a_{k;j})^*\big)=1.$$
Therefore,
\begin{align}\label{Equation: Main Theorem Step 2 eq0.5}
\phi\big((D_iD_i^*)M(D_j)(D_iD_i^*)M(D_j)^*\big)=\phi_1\big((a_{1;i}a_{1;i}^*)M(a_{1;j})(a_{1;i}a_{1;i}^*)M(a_{1;j})^*\big).
\end{align}
\end{example}

\begin{proof}[Proof of Theorem \ref{Theorem: Main Theorem} Claim $(2)'$]
Since $\a_1$ contains a non-unitary variable,
there exists $i\in I$ such that $\phi_1\big((a_{1;i}a_{1;i}^*)^2\big)\neq1$.
Let $j\in I\setminus\{i\}$ be arbitrary
and let $M$ be any $*$-word.
Applying (\ref{Equation: *-freeness Computation 1}) to both sides of equation (\ref{Equation: Main Theorem Step 2 eq0.5}) yields
\begin{multline*}
\phi_1\big(M(a_{1;j})M(a_{1;j})^*\big)+\phi_1\big((a_{1;i}a_{1;i}^*)^2\big)\prod_{k=1}^K\big|\phi_k\big(M(a_{k;j})\big)\big|^2-\prod_{k=1}^K\big|\phi_k\big(M(a_{k;j})\big)\big|^2\\
=\phi_1\big(M(a_{1;j})M(a_{1;j})^*\big)+\phi_1\big((a_{1;i}a_{1;i}^*)^2\big)\big|\phi_1\big(M(a_{1;j})\big)\big|^2
-\big|\phi_1\big(M(a_{1;j})\big)\big|^2,
\end{multline*}
which reduces to
$$\Big(\phi_1\big((a_{1;i}a_{1;i}^*)^2\big)-1\Big)\prod_{k=1}^K\big|\phi_k\big(M(a_{k;j})\big)\big|^2=
\Big(\phi_1\big((a_{1;i}a_{1;i}^*)^2\big)-1\Big)\big|\phi_1\big(M(a_{1;j})\big)\big|^2.$$
As $\phi_1\big((a_{1;i}a_{1;i}^*)^2\big)\neq1$,
we conclude that for every $j\in I\setminus\{i\}$ and $*$-word $M$,
one has
\begin{align}\label{Equation: Main Theorem Step 2 eq1}
\big|\phi\big(M(D_j)\big)\big|^2=\big|\phi_1\big(M(a_{1;j})\big)\big|^2.
\end{align}

In order to complete the present proof,
it only remains to show that
$\big|\phi\big(M(D_i)\big)\big|^2=\big|\phi_1\big(M(a_{1;i})\big)\big|^2$
for every $*$-word $M$.
For this purpose,
fix an arbitrary $j\in I\setminus\{i\}$.
We divide the remainder of this proof into the two following cases:
\begin{enumerate}
\item $\phi_1\big((a_{1;j}a_{1;j}^*)^2\big)\neq1$ (i.e., $a_{1;j}$ is not unitary); and
\item $\phi_1\big((a_{1;j}a_{1;j}^*)^2\big)=1$ (i.e., $a_{1;j}$ is unitary).
\end{enumerate}
\noindent{\bf (1).}
Suppose that $\phi_1\big((a_{1;j}a_{1;j}^*)^2\big)\neq1$.
This case follows by repeating the first paragraph of this proof with $i$ and $j$ interchanged.

\noindent{\bf (2).}
Suppose that $\phi_1\big((a_{1;j}a_{1;j}^*)^2\big)=1$.
We divide the remainder of the proof into the following two sub-cases,
which are exhaustive according to (\ref{Equation: Main Theorem Step 2 eq1}):
\begin{enumerate}[\qquad(\text{2.}1)]
\item $\phi_1(a_{1;j})=0$; and
\item $\phi_1(a_{1;j})\neq0$ and $|\phi_k(a_{k;j})|^2=1$ for all $k\in\{2,\ldots,K\}$.
\end{enumerate}
\noindent{\bf(2.1).}
Suppose that $\phi_1(a_{1;j})=0$.
According to (\ref{Equation: Method of Proof 1}),
\begin{multline*}
\phi\Big(D_jM(D_i)D_j^*(D_iD_i^*)D_jM(D_i)^*D_j^*(D_iD_i^*)\Big)\\
=\prod_{k=1}^K\phi_k\Big(a_{k;j}M(a_{k;i})a_{k;j}^*(a_{k;i}a_{k;i}^*)a_{k;j}M(a_{k;i})^*a_{k;j}^*(a_{k;i}a_{k;i}^*)\Big).
\end{multline*}
On the one hand,
since $D_j$ is unitary and $\phi(D_j)=0$,
(\ref{Equation: *-freeness Computation 2}) implies that (recall that $a_{k;i}$ is unitary for $k=2,\ldots,K$)
\begin{align*}
&\phi\Big(D_jM(D_i)D_j^*(D_iD_i^*)D_jM(D_i)^*D_j^*(D_iD_i^*)\Big)\\
=&\big|\phi\big(M(D_i)\big)\big|^2\phi\big((D_iD_i^*)^2\big)
+\phi\big(M(D_i)M(D_i)^*\big)\big|\phi(D_iD_i^*)\big|^2
-\big|\phi\big(M(D_i)\big)\big|^2\big|\phi(D_iD_i^*)\big|^2\\
=&\phi_1\big((a_{1;i}a_{1;i}^*)^2\big)\prod_{k=1}^K\big|\phi_k\big(M(a_{k;i})\big)\big|^2+\phi_1\big(M(a_{1;i})M(a_{1;i})^*\big)-\prod_{k=1}^K\big|\phi_k\big(M(a_{k;i})\big)\big|^2.
\end{align*}
On the other hand,
if $k\neq 1$,
then the fact that $a_{k;i}$ and $a_{k;j}$ are unitary imply that
$$\phi_k\Big(a_{k;j}M(a_{k;i})a_{k;j}^*(a_{k;i}a_{k;i}^*)a_{k;j}M(a_{k;i})^*a_{k;j}^*(a_{k;i}a_{k;i}^*)\Big)=1,$$
and if $k=1$,
then the fact that $a_{1;j}$ is unitary and $\phi_1(a_{1;j})=0$ implies by (\ref{Equation: *-freeness Computation 2}) that
\begin{multline*}
\phi_1\Big(a_{1;j}M(a_{1;i})a_{1;j}^*(a_{1;i}a_{1;i}^*)a_{1;j}M(a_{1;i})^*a_{1;j}^*(a_{1;i}a_{1;i}^*)\Big)\\
=\phi_1\big((a_{1;i}a_{1;i}^*)^2\big)\big|\phi_1\big(M(a_{1;i})\big)\big|^2+\phi_1\big(M(a_{1;i})M(a_{1;i})^*\big)-\big|\phi_1\big(M(a_{1;i})\big)\big|^2.
\end{multline*}
Therefore,
we conclude that
$$\Big(\phi_1\big((a_{1;i}a_{1;i}^*)^2\big)-1\Big)\prod_{k=1}^K\big|\phi_k\big(M(a_{k;i})\big)\big|^2=
\Big(\phi_1\big((a_{1;i}a_{1;i}^*)^2\big)-1\Big)\big|\phi_1\big(M(a_{1;i})\big)\big|^2,$$
which implies that $|\phi(D_i)|^2=\big|\phi_1\big(M(a_{1;i})\big)\big|^2$,
as desired.

\noindent{\bf(2.2).}
Suppose that $\phi_1(a_{1;j})\neq0$ and $|\phi_k(a_{k;j})|^2=1$ for all $k=2,\ldots,K$.
Then,
$$\prod_{k=2}^K\left(\big|\phi_k\big(M(a_{k;i})\big)\big|^2+|\phi_k(a_{k;j})|^2-\big|\phi_k\big(M(a_{k;i})\big)\big|^2|\phi_k(a_{k;j})|^2\right)=1,$$
and thus it follows from equation (\ref{Equation: Method of Proof 3}) with
$M$ arbitrary and $N(x)=x$ that
\begin{multline*}
\prod_{k=1}^K\big|\phi_k\big(M(a_{k;i})\big)\big|^2
+\phi_1\big(M(a_{1;i})M(a_{1;i})^*\big)\prod_{k=1}^K|\phi_k(a_{k;j})|^2
-|\phi_1(a_{1;j})|^2\prod_{k=1}^K\big|\phi_k\big(M(a_{k;i})\big)\big|^2\\
=\big|\phi_1\big(M(a_{1;i})\big)\big|^2
+\phi_1\big(M(a_{1;i})M(a_{1;i})^*\big)|\phi_1(a_{1;j})|^2
-\big|\phi_1\big(M(a_{1;i})\big)\big|^2|\phi_1(a_{1;j})|^2,
\end{multline*}
which reduces to
$$\left(1-|\phi_1(a_{1;j})|^2\right)\prod_{k=1}^K\big|\phi_k\big(M(a_{k;i})\big)\big|^2=
\left(1-|\phi_1(a_{1;j})|^2\right)\big|\phi_1\big(M(a_{1;i})\big)\big|^2.$$
Since $|\phi_1(a_{1;j})|^2\neq 1$
(the opposite would imply that $D_j$ is a constant multiple of the unit vector),
we conclude that the theorem holds in this sub-case.
\end{proof}

\subsection{Theorem \ref{Theorem: Main Theorem} Claim $(3)'$}\label{Sub: Tensor Freeness Conditions}

In this subsection,
we assume that all $a_{k;i}$ are unitary.

\begin{remark}
Any $*$-word $M$ evaluated in a unitary variable $u$
can be reduced to $M(u)=u^m$ for some integer $m$.
Thus,
to alleviate notation,
in this subsection,
we use integer powers of random variables instead of $*$-words evaluated in random variables.
\end{remark}

We begin with the following lemma.

\begin{lemma}\label{Lemma: Main Theorem Step 3-1}
If at least one of $\phi(D_i^m)$ and $\phi(D_j^n)$ is nonzero,
then for every
distinct $k,l\leq K$,
one has
\begin{align}\label{Equation: Main Theorem Step 3 eq1}
|\phi_k(a_{k;i}^m)|^2|\phi_l(a_{l;j}^n)|^2\big(1-|\phi_l(a_{l;i}^m)|^2\big)\big(1-|\phi_k(a_{k;j}^n)|^2\big)=0.
\end{align}
\end{lemma}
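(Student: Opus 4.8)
The plan is to specialize the master identity from Example \ref{Example: Strategy} to power words and then to isolate the two indices $k,l$ by means of a purely algebraic fact about numbers in $[0,1]$. First I would apply equation (\ref{Equation: Method of Proof 3}) with the $*$-words $M(x)=x^m$ and $N(x)=x^n$. Since every $a_{k;i}$ is unitary in this subsection, the products $D_i$ and $D_j$ are unitary as well, so all the second-moment factors collapse: $\phi_k\big(a_{k;i}^m(a_{k;i}^m)^*\big)=\phi_k\big(a_{k;j}^n(a_{k;j}^n)^*\big)=1$, and likewise for $\D$. Writing $x_k:=|\phi_k(a_{k;i}^m)|^2$ and $y_k:=|\phi_k(a_{k;j}^n)|^2$, which lie in $[0,1]$ by (\ref{Equation: Norm of Element Condition}), the necessary condition (\ref{Equation: Method of Proof 3}) reduces to the scalar identity
$$\prod_{k=1}^K x_k+\prod_{k=1}^K y_k-\prod_{k=1}^K x_ky_k=\prod_{k=1}^K\big(x_k+y_k-x_ky_k\big).$$
The hypothesis that $\phi(D_i^m)\neq0$ or $\phi(D_j^n)\neq0$ translates, via $\phi(D_i^m)=\prod_k\phi_k(a_{k;i}^m)$, into the statement that all $x_k>0$ or all $y_k>0$.

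Next I would establish the algebraic heart of the lemma: for $x_k,y_k\in[0,1]$ satisfying the displayed identity, with all $x_k>0$ or all $y_k>0$, one has $x_ky_l(1-x_l)(1-y_k)=0$ for every pair of distinct $k,l$. The key manipulation is the factorization $x_k+y_k-x_ky_k=x_k(1-y_k)+(1-x_k)y_k+x_ky_k$. Expanding $\prod_k\big(x_k+y_k-x_ky_k\big)$ over this three-term sum and grouping according to which of the three summands each index contributes, the terms in which no index picks the first summand total $\prod_k y_k$, those in which no index picks the second total $\prod_k x_k$, and the single term in which every index picks the third is $\prod_k x_ky_k$. By inclusion--exclusion, the identity forces the remaining terms, indexed by partitions $\{1,\dots,K\}=S_A\sqcup S_B\sqcup S_{AB}$ with $S_A,S_B$ both nonempty, to sum to zero:
$$\sum_{\substack{S_A\sqcup S_B\sqcup S_{AB}\\ S_A,S_B\neq\emptyset}}\ \prod_{k\in S_A}x_k(1-y_k)\ \prod_{k\in S_B}(1-x_k)y_k\ \prod_{k\in S_{AB}}x_ky_k=0.$$
Each summand is a product of nonnegative factors, so every summand vanishes individually.

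Finally I would deduce the conclusion by contradiction. Assume all $x_j>0$ (the case where all $y_j>0$ is symmetric), fix distinct $k,l$, and suppose $x_ky_l(1-x_l)(1-y_k)\neq0$, so that $y_l>0$, $x_l<1$, and $y_k<1$. I would then exhibit a single partition whose associated summand is strictly positive, contradicting the previous step: put $k\in S_A$ (factor $x_k(1-y_k)>0$) and $l\in S_B$ (factor $(1-x_l)y_l>0$), and for each remaining index $j$ place $j$ in $S_{AB}$ when $y_j>0$ (factor $x_jy_j>0$) and in $S_A$ when $y_j=0$ (factor $x_j>0$); every factor is positive precisely because all $x_j>0$. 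This forces $x_ky_l(1-x_l)(1-y_k)=0$, which is (\ref{Equation: Main Theorem Step 3 eq1}).

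The main obstacle is this last algebraic step: extracting a two-index statement from an identity that, for $K>2$, entangles all $K$ indices. The inclusion--exclusion rewriting into a sum of manifestly nonnegative partition terms is what makes the extraction tractable, and the delicate point is the choice, in the final contradiction, of an assignment of the spectator indices $j\neq k,l$ that keeps every factor positive under the sole hypothesis that the $x_j$ (resp.\ the $y_j$) are positive. It may be cleanest to record this fact about real numbers separately as one of the polynomial-expansion propositions in Appendix \ref{Appendix}.
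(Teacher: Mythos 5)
Your proof is correct, and its first half coincides with the paper's: the paper likewise specializes (\ref{Equation: Method of Proof 3}) to $M(x)=x^m$, $N(x)=x^n$, uses unitarity to collapse the second moments, invokes (\ref{Equation: Norm of Element Condition}) for the bounds $0\leq x_k,y_k\leq 1$, and translates $\phi(D_i^m)\neq0$ into ``all $x_k>0$.'' Where you diverge is in the proof of the algebraic core, which the paper outsources to Appendix \ref{Appendix} (Lemma \ref{Lemma: Polynomial Expansion 6} together with Proposition \ref{Proposition: Polynomial Expansion 8}). There, the product $\prod_k(x_k+y_k-x_ky_k)$ is handled by pairing \emph{consecutive} indices, splitting each pair $(x_k+y_k-x_ky_k)(x_{k+1}+y_{k+1}-x_{k+1}y_{k+1})$ into three nonnegative terms, arguing by induction on $K$ (with separate even and odd cases), extracting only the relations for consecutive pairs, and then ``rearranging the index set'' to reach arbitrary $k,l$. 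Your expansion $x+y-xy=x(1-y)+(1-x)y+xy$ applied to every factor at once, followed by the inclusion--exclusion identification of the three boundary sums $\prod x_k$, $\prod y_k$, $\prod x_ky_k$, yields a single sum of manifestly nonnegative terms indexed by partitions $S_A\sqcup S_B\sqcup S_{AB}$, from which the two-index conclusion for \emph{any} distinct $k,l$ drops out directly by exhibiting one strictly positive summand. This avoids the induction, the parity case split, and the relabeling step, and it makes the role of the hypothesis ``all $x_k>0$ or all $y_k>0$'' completely explicit (it is exactly what lets you populate the spectator indices with positive factors) --- a point the paper's appendix treats somewhat tersely. Your suggestion to record the scalar fact as a standalone proposition is precisely what the paper does.
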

\begin{proof}
Since $a_{k;i}$ and $a_{k;j}$ are unitary for all $k$,
it follows from equation (\ref{Equation: Method of Proof 3}) with $M(x)=x^m$ and $N(x)=x^n$ that
\begin{multline}\label{Equation: Main Theorem Step 3 eq2}
\prod_{k=1}^K\big|\phi_k(a_{k;i}^m)\big|^2+\prod_{k=1}^K\big|\phi_k(a_{k;j}^n)\big|^2-\prod_{k=1}^K\big|\phi_k(a_{k;i}^m)\big|^2\big|\phi_k(a_{k;j}^n)\big|^2\\
=\prod_{k=1}^K\left(\big|\phi_k(a_{k;i}^m)\big|^2+\big|\phi_k(a_{k;j}^n)\big|^2-\big|\phi_k(a_{k;i}^m)\big|^2\big|\phi_k(a_{k;j}^n)\big|^2\right).
\end{multline}
Moreover, 
we notice that $\big|\phi_k(a_{k;i}^m)\big|^2,\big|\phi_k(a_{k;j}^n)\big|^2\leq 1$ (see (\ref{Equation: Norm of Element Condition})).
Whenever real numbers $0\leq x_1,\ldots,x_K,y_1,\ldots,y_K\leq 1$ are such that
$$\prod_{k=1}^Kx_k+\prod_{k=1}^Ky_k-\prod_{k=1}^Kx_ky_k=\prod_{k=1}^K(x_k+y_k-x_ky_k)$$
and $x_1,\ldots,x_K\neq0$ or $y_1,\ldots,y_K\neq 0$,
one has
$x_ky_l(1-x_l)(1-y_k)=0$ for all distinct $k,l\leq K$ (see Proposition \ref{Proposition: Polynomial Expansion 8}).
Applying this result to (\ref{Equation: Main Theorem Step 3 eq2}) concludes the proof of the lemma.
\end{proof}

Suppose without loss of generality that there exists $i\in I$ and $p\in\N$ such that $\phi(D_i^p)\neq0$ and $|\phi_1(a_{1;i}^p)|\neq1$
(i.e., $a_{1;i}^p\neq\phi_1(a_{1;i}^p)$).
We must prove that the TFC hold with $\a_1$ as a dominating collection.
We separate the proof in two propositions.

\begin{proposition}\label{Proposition: Unitaries Part 1}
For all $n\in\N$ and $j\in I$,
if $\phi(D_j^n)\neq0$,
then $|\phi_k(a_{k;j}^n)|=1$ whenever $k\in\{2,\ldots,K\}$.
\end{proposition}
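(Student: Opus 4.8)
The plan is to deduce the conclusion from Lemma \ref{Lemma: Main Theorem Step 3-1} by choosing the index $j$ and exponent $n$ cleverly and then setting up a contradiction for each $k\in\{2,\ldots,K\}$ for which $|\phi_k(a_{k;j}^n)|\neq1$. We start from the standing assumption that there exist $i\in I$ and $p\in\N$ with $\phi(D_i^p)\neq0$ and $|\phi_1(a_{1;i}^p)|\neq1$. Fix any $j\in I$ and $n\in\N$ with $\phi(D_j^n)\neq0$, and suppose toward a contradiction that $|\phi_k(a_{k;j}^n)|\neq1$ for some $k\in\{2,\ldots,K\}$. The idea is to play the pair $(i,p)$ (in coordinate $1$) against the pair $(j,n)$ (in coordinate $k$) inside Lemma \ref{Lemma: Main Theorem Step 3-1}.

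First I would check that the pair $(i,j)$ can be taken distinct: if $i=j$ then $\phi(D_i^p)\neq0$ and $\phi(D_j^n)=\phi(D_i^n)$ can be handled by noting that the lemma's hypothesis only requires \emph{one} of $\phi(D_i^p),\phi(D_j^n)$ to be nonzero, so I would instead apply the lemma with the two exponents $p$ and $n$ at the same index, or select a genuinely distinct index when $|I|\geq2$; the case $|I|=1$ makes freeness vacuous and can be dispatched separately. The key step is then to invoke equation (\ref{Equation: Main Theorem Step 3 eq1}) with the roles of the coordinates $1$ and $k$ and the two (index, exponent) pairs arranged so that the factors read
\begin{align*}
|\phi_1(a_{1;i}^p)|^2\,|\phi_k(a_{k;j}^n)|^2\,\big(1-|\phi_k(a_{k;i}^p)|^2\big)\,\big(1-|\phi_1(a_{1;j}^n)|^2\big)=0.
\end{align*}
Because $\phi(D_i^p)=\prod_l\phi_l(a_{l;i}^p)\neq0$, every factor $\phi_l(a_{l;i}^p)$ is nonzero, so in particular $|\phi_1(a_{1;i}^p)|^2\neq0$; similarly $\phi(D_j^n)\neq0$ forces $|\phi_k(a_{k;j}^n)|^2\neq0$. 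Hence the first two factors above are strictly positive and can be divided out, leaving $\big(1-|\phi_k(a_{k;i}^p)|^2\big)\big(1-|\phi_1(a_{1;j}^n)|^2\big)=0$.

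To close the argument I would exploit the hypothesis $|\phi_1(a_{1;i}^p)|\neq1$ together with the bound $|\phi_l(\cdot)|\leq1$ from (\ref{Equation: Norm of Element Condition}): since $\phi(D_i^p)\neq0$ forces every coordinate factor nonzero while $|\phi_1(a_{1;i}^p)|<1$ strictly, the product $|\phi(D_i^p)|=\prod_l|\phi_l(a_{l;i}^p)|$ is a nonzero number strictly less than the $k$-th factor could allow unless that factor compensates—so I would argue that $|\phi_1(a_{1;i}^p)|\neq1$ propagates to force $|\phi_k(a_{k;i}^p)|\neq1$ as well (otherwise one could not reconcile the vanishing with the standing assumption), driving the first factor $1-|\phi_k(a_{k;i}^p)|^2$ away from zero and thereby forcing $|\phi_1(a_{1;j}^n)|=1$. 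Feeding $|\phi_1(a_{1;j}^n)|=1$ back, Proposition \ref{Proposition: Unitary Equal to Unit} makes $a_{1;j}^n$ deterministic, which via the factorization of $\phi(D_j^n)$ lets me iterate the same dichotomy across all coordinates and conclude $|\phi_k(a_{k;j}^n)|=1$ for every $k\geq2$, contradicting the assumption that some such modulus differs from $1$.

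The hard part will be the bookkeeping in the middle step: Lemma \ref{Lemma: Main Theorem Step 3-1} is symmetric in a way that produces \emph{two} possible vanishing factors, and I must rule out the ``wrong'' one using the special index $(i,p)$ where non-unitarity is witnessed. The delicate point is ensuring that the pair $(i,p)$ can always be brought into the same application of (\ref{Equation: Main Theorem Step 3 eq1}) as the coordinate $k$ under scrutiny, which requires either $i\neq j$ or a careful reuse of the single-index version; verifying that this selection is always possible, and that the nonvanishing of $\phi(D_i^p)$ genuinely forbids the factor $1-|\phi_k(a_{k;i}^p)|^2$ from being zero, is where the real care lies. Everything else reduces to the elementary inequality lemmas in the appendix and the faithfulness-based rigidity of Proposition \ref{Proposition: Unitary Equal to Unit}.
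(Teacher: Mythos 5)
Your plan has two genuine gaps, and the first is fatal as written. In applying Lemma \ref{Lemma: Main Theorem Step 3-1} you have swapped the roles of the two coordinates. You arrange the factors as
$|\phi_1(a_{1;i}^p)|^2\,|\phi_k(a_{k;j}^n)|^2\big(1-|\phi_k(a_{k;i}^p)|^2\big)\big(1-|\phi_1(a_{1;j}^n)|^2\big)=0$
and then try to argue that $1-|\phi_k(a_{k;i}^p)|^2\neq 0$, i.e.\ that the non-unitarity witnessed in coordinate $1$ at $(i,p)$ ``propagates'' to force $|\phi_k(a_{k;i}^p)|\neq 1$ for $k\geq 2$. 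This is backwards: the proposition you are proving, applied at $(j,n)=(i,p)$, asserts precisely that $|\phi_k(a_{k;i}^p)|=1$ for all $k\geq 2$, so that factor \emph{is} zero and your deduction collapses (and even if it went through it would only yield $|\phi_1(a_{1;j}^n)|=1$, which is not the desired conclusion). The correct assignment is the opposite one: put the scrutinized coordinate $k\in\{2,\ldots,K\}$ at $(j,n)$ and coordinate $1$ at $(i,p)$, so that (\ref{Equation: Main Theorem Step 3 eq1}) reads
$|\phi_k(a_{k;i}^p)|^2\,|\phi_1(a_{1;j}^n)|^2\big(1-|\phi_1(a_{1;i}^p)|^2\big)\big(1-|\phi_k(a_{k;j}^n)|^2\big)=0$;
then $\phi(D_i^p)\neq0$ gives $\phi_k(a_{k;i}^p)\neq0$, $\phi(D_j^n)\neq0$ gives $\phi_1(a_{1;j}^n)\neq0$, and $|\phi_1(a_{1;i}^p)|\neq1$ is the standing hypothesis, so the last factor must vanish. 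No ``propagation'' argument is needed, and none is available.

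The second gap is the case $j=i$ with $n\neq p$, which you dispose of in one sentence. Lemma \ref{Lemma: Main Theorem Step 3-1} cannot be applied ``with the two exponents at the same index'': its proof rests on (\ref{Equation: Method of Proof 3}), which is the $*$-freeness computation for two \emph{distinct} members of the family, so distinctness of the indices is essential, and picking some other index $j'\neq i$ does not by itself say anything about the coordinates of $D_i^n$. This case is where most of the work lies. The paper handles it by fixing an auxiliary $j\in I\setminus\{i\}$ and splitting on whether $\phi(D_j)$ vanishes; when $\phi(D_j)=0$ it evaluates degree-eight words such as $\phi\big(D_jD_i^nD_j^{-1}D_i^nD_jD_i^{-n}D_j^{-1}D_i^{-n}\big)$ via (\ref{Equation: *-freeness Computation 2}), invokes Proposition \ref{Proposition: Polynomial Expansion 8} to show that the set of coordinates $k$ with $|\phi_k(a_{k;i}^n)|\neq1$ is at most a singleton, and then runs a further mixed word in $D_i^p$ and $D_i^n$ to force that singleton to be $\{1\}$. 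None of this machinery is present in your outline, and the first-paragraph argument does not substitute for it.
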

\begin{proof}
Let $j\in I\setminus\{i\}$ and $n\in\N$ be arbitrary.
Since $\phi(D_i^p)\neq0$,
it follows from Lemma \ref{Lemma: Main Theorem Step 3-1} that for all $k\in\{2,\ldots,K\}$,
$$|\phi_k(a_{k;i}^p)|^2|\phi_1(a_{1;j}^n)|^2\big(1-|\phi_1(a_{1;i}^p)|^2\big)\big(1-|\phi_k(a_{k;j}^n)|^2\big)=0.$$
As $|\phi_1(a_{1;i}^p)|\neq1$ and $|\phi_k(a_{k;i}^p)|^2\neq0$,
we conclude that $|\phi_1(a_{1;j}^n)|^2\big(1-|\phi_k(a_{k;j}^n)|^2\big)=0$.
Therefore,
if $\phi(D_j^n)\neq0$,
then $|\phi_k(a_{k;j}^n)|=1$ for all $k\in\{2,\ldots,K\}$,
as desired.

We must now prove that for all $n\neq p$,
if $\phi(D_i^n)\neq0$,
then $|\phi_k(a_{k;i}^n)|=1$ for all $k\in\{2,\ldots,K\}$.
For this purpose,
let us fix $j\in I\setminus\{i\}$.
We divide the remainder of the proof into the two following cases:
\begin{enumerate}
\item $\phi(D_j)\neq0$; and
\item $\phi(D_j)=0$.
\end{enumerate}

\noindent{\bf (1).}
If $\phi(D_j)\neq0$,
then
$|\phi_k(a_{k;j})|=1$ for all $k\in\{2,\ldots,K\}$,
yet
$|\phi_1(a_{1;j})|\neq 1$
as the opposite implies that $D_j$ is a constant multiple of the unit vector.
The same application of Lemma \ref{Lemma: Main Theorem Step 3-1} as in the first paragraph of this proof
with $i$ and $j$ interchanged and $p=1$
implies that $|\phi_1(a_{1;i}^n)|^2\big(1-|\phi_k(a_{k;i}^n)|^2\big)=0$ for all $n\in\N$ and $k\in\{2,\ldots,K\}$,
as deisred.

\noindent{\bf (2).}
Suppose that $\phi(D_j)=0$.
Define the set
$\O=\{k\in \{1, \ldots , K\}:\phi_k(a_{k;j})=0\}$,
which is nonempty.
Suppose that $\phi(D_i^n)\neq0$,
and let $k_0\in\O$ be fixed.
If $k\not\in\O$,
then it follows from (\ref{Equation: Main Theorem Step 3 eq1}) that
$$|\phi_{k_0}(a_{k_0;i}^n)|^2|\phi_k(a_{k;j})|^2\big(1-|\phi_k(a_{k;i}^n)|^2\big)\big(1-|\phi_{k_0}(a_{k_0;j})|^2\big)=0,$$
hence $|\phi_k(a_{k;i}^n)|^2=1$.
Thus,
if we define $\U_n=\big\{k\in \{1, \ldots , K\}:|\phi_k(a_{k;i}^n)|^2\neq1\big\}$,
then $\U_n\subset\O$.
According to (\ref{Equation: Method of Proof 1})
$$\phi\Big(D_jD_i^nD_j^{-1}D_i^nD_jD_i^{-n}D_j^{-1}D_i^{-n}\Big)=
\prod_{k=1}^K\phi_k\Big(a_{k;j}a_{k;i}^na_{k;j}^{-1}a_{k;i}^na_{k;j}a_{k;i}^{-n}a_{k;j}^{-1}a_{k;i}^{-n}\Big).$$
On the one hand,
since $\phi(D_j)=0$,
(\ref{Equation: *-freeness Computation 2}) implies that 
$$\phi\Big(D_jD_i^nD_j^{-1}D_i^nD_jD_i^{-n}D_j^{-1}D_i^{-n}\Big)=2\prod_{k\in\U_n}|\phi_k(a_{k;i}^n)|^2-\prod_{k\in\U_n}|\phi_k(a_{k;i}^n)|^4.$$
On the other hand,
if $k\not\in\U_n$ (i.e., $a_{k;i}$ is deterministic),
then
$$\phi_k\Big(a_{k;j}a_{k;i}^na_{k;j}^{-1}a_{k;i}^na_{k;j}a_{k;i}^{-n}a_{k;j}^{-1}a_{k;i}^{-n}\Big)=1,$$
and if $k\in\U_n\subset\O$,
then it follows from (\ref{Equation: *-freeness Computation 2}) that
$$\phi_k\Big(a_{k;j}a_{k;i}^na_{k;j}^{-1}a_{k;i}^na_{k;j}a_{k;i}^{-n}a_{k;j}^{-1}a_{k;i}^{-n}\Big)=2|\phi_k(a_{k;i}^n)|^2-|\phi_k(a_{k;i}^n)|^4.$$
Therefore,
$$2\prod_{k\in\U_n}|\phi_k(a_{k;i}^n)|^2-\prod_{k\in\U_n}|\phi_k(a_{k;i}^n)|^4=\prod_{k\in\U_n}\big(2|\phi_k(a_{k;i}^n)|^2-|\phi_k(a_{k;i}^n)|^4\big).$$
According to Proposition \ref{Proposition: Polynomial Expansion 8},
the above equation cannot hold if $\U_n$ contains more than one element.
Thus,
$\U_n$ is a singleton, or empty.

To prove the lemma in this case,
it only remains to show that if $\phi(D_i^n)\neq0$ and $\U_n\neq\em$,
then $\U_n=\{1\}$.
We already know that $\U_p=\{1\}$ (by assumption),
but suppose by contradiction that there exists $n\neq p$ such that $\U_n=\{l\}$ for some $l\in\{2,\ldots,K\}$ .
According to equation (\ref{Equation: Method of Proof 1}),
$$\phi\Big(D_jD_i^pD_j^{-1}D_i^nD_jD_i^{-p}D_j^{-1}D_i^{-n}\Big)
=\prod_{k=1}^K\phi_k\Big(a_{k;j}a_{k;i}^pa_{k;j}^{-1}a_{k;i}^na_{k;j}a_{k;i}^{-p}a_{k;j}^{-1}a_{k;i}^{-n}\Big).$$
On the one hand,
since $\phi(D_j)=0$,
$|\phi(D_i^p)|=|\phi_{1}(a_{1;i}^p)|$,
and $|\phi(D_i^n)|=|\phi_{l}(a_{l;i}^n)|$,
equation (\ref{Equation: *-freeness Computation 2}) implies that
$$\phi\Big(D_jD_i^pD_j^{-1}D_i^nD_jD_i^{-p}D_j^{-1}D_i^{-n}\Big)=|\phi_{1}(a_{1;i}^p)|^2+|\phi_{l}(a_{l;i}^n)|^2-|\phi_{1}(a_{1;i}^p)|^2|\phi_{l}(a_{l;i}^n)|^2.$$
On the other hand,
$$\phi_k\Big(a_{k;j}a_{k;i}^pa_{k;j}^{-1}a_{k;i}^na_{k;j}a_{k;i}^{-p}a_{k;j}^{-1}a_{k;i}^{-n}\Big)=1$$
for all $k\in \{1, \ldots , K\}$,
since there is always one of $a_{k;i}^p$ or $a_{k;i}^n$ that is deterministic thanks to the assumption that $1\neq l$.
Therefore,
$|\phi_{1}(a_{1;i}^m)|^2+|\phi_{l}(a_{l;i}^n)|^2-|\phi_{1}(a_{1;i}^m)|^2|\phi_{l}(a_{l;i}^n)|^2=1,$
which reduces to
$\big(|\phi_{1}(a_{l_1;i}^m)|^2-1\big)\big(1-|\phi_{l}(a_{l;i}^n)|^2\big)=0.$
Given this contradiction,
we conclude that $\U_n=\{1\}$ for all $n\in\N$,
as desired.
\end{proof}

\begin{proposition}
For all $n\in\N$ and $j\in I$,
if $\phi(D_j^n)=0$,
then $\phi_1(a_{1;j}^n)=0$.
\end{proposition}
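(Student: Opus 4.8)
The plan is to prove that $\phi(D_j^n)=0$ forces the first tensor factor to be centered, by isolating the coordinate $k=1$ in identity (\ref{Equation: Method of Proof 1}) through commutators, exactly in the spirit of Proposition \ref{Proposition: Unitaries Part 1}. Throughout I would use the standing assumption that $\phi(D_i^p)\neq0$ with $|\phi_1(a_{1;i}^p)|\neq1$, together with Proposition \ref{Proposition: Unitaries Part 1}, which guarantees that $a_{k;i}^p$ is deterministic for every $k\geq2$. First I would isolate the following self-contained key step: whenever $s\in I$ and $q\in\N$ satisfy $\phi(D_s^q)\neq0$ and $|\phi_1(a_{1;s}^q)|\neq1$, then for every $\ell\neq s$ and $n\in\N$ with $\phi(D_\ell^n)=0$ one has $\phi_1(a_{1;\ell}^n)=0$. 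To see this I would expand $\phi\big(D_\ell^nD_s^qD_\ell^{-n}D_s^{-q}\big)$ in two ways: the left-hand side via (\ref{Equation: *-freeness Computation 1}) and $\phi(D_\ell^n)=0$ collapses to $|\phi(D_s^q)|^2=|\phi_1(a_{1;s}^q)|^2$ (using Proposition \ref{Proposition: Unitaries Part 1} on $D_s^q$), while on the right-hand side of (\ref{Equation: Method of Proof 1}) every factor with $k\geq2$ equals $1$ because $a_{k;s}^q$ is a scalar, leaving the single factor $|\phi_1(a_{1;\ell}^n)|^2+|\phi_1(a_{1;s}^q)|^2-|\phi_1(a_{1;\ell}^n)|^2|\phi_1(a_{1;s}^q)|^2$. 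Equating and using $|\phi_1(a_{1;s}^q)|\neq1$ gives $\phi_1(a_{1;\ell}^n)=0$.

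Applying the key step with $(s,q)=(i,p)$ immediately settles every $j\neq i$. It then remains to treat $j=i$, and here I would fix an auxiliary index $j\in I\setminus\{i\}$ and split on the value of $\phi(D_j)$. If $\phi(D_j)\neq0$, then Proposition \ref{Proposition: Unitaries Part 1} makes $a_{k;j}$ deterministic for $k\geq2$, the assumption that $\D$ contains no scalar multiple of the unit together with Proposition \ref{Proposition: Unitary Equal to Unit} forces $|\phi_1(a_{1;j})|\neq1$, and $\phi(D_j)\neq0$ forces $\phi_1(a_{1;j})\neq0$; thus $(s,q)=(j,1)$ satisfies the hypotheses of the key step, which applied with $\ell=i$ yields $\phi(D_i^n)=0\Rightarrow\phi_1(a_{1;i}^n)=0$.

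The remaining sub-case $\phi(D_j)=0$ is the delicate one, and the device is to feed it back into what is already proved: since $j\neq i$, the key step with $(s,q)=(i,p)$ applied to $\ell=j$, $n=1$ gives $\phi_1(a_{1;j})=0$, so $a_{1;j}$ is a zero-trace unitary. I would then compute $\phi\big(D_jD_i^pD_j^{-1}D_i^nD_jD_i^{-p}D_j^{-1}D_i^{-n}\big)$ via (\ref{Equation: *-freeness Computation 2}) with $b=D_j$, $c_1=D_i^p$, $c_2=D_i^n$: since $\phi(D_j)=0$ and $\phi(D_i^n)=0$ this equals $|\phi(D_i^p)|^2=|\phi_1(a_{1;i}^p)|^2$. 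On the other hand (\ref{Equation: Method of Proof 1}) factors this expectation over $k$; because $a_{k;i}^p$ is a scalar for $k\geq2$ each such factor telescopes to $1$, and because $a_{1;j}$ is a zero-trace unitary the $k=1$ factor equals $|\phi_1(a_{1;i}^p)|^2+|\phi_1(a_{1;i}^n)|^2-|\phi_1(a_{1;i}^p)|^2|\phi_1(a_{1;i}^n)|^2$ by (\ref{Equation: *-freeness Computation 2}). Equating and cancelling leaves $|\phi_1(a_{1;i}^n)|^2\big(1-|\phi_1(a_{1;i}^p)|^2\big)=0$, and $|\phi_1(a_{1;i}^p)|\neq1$ finishes the argument.

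I expect the main obstacle to be precisely this last sub-case: a naive commutator at $j=i$ fails because $D_i^p$ and $D_i^n$ lie in the same free block, so the longer word of (\ref{Equation: *-freeness Computation 2}) is needed, and it only becomes usable once one knows $\phi_1(a_{1;j})=0$. The crucial and least obvious point is that this vanishing is not an extra hypothesis but a consequence of the already-established case $j\neq i$, so the two cases must be proved in this order; the collapse of every factor with $k\geq2$ to a scalar (from Proposition \ref{Proposition: Unitaries Part 1}) is what reduces each tensor identity to a scalar identity in the single coordinate $k=1$.
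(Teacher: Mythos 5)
Your proposal is correct and follows essentially the same route as the paper: the commutator identity $\phi(D_\ell^nD_s^qD_\ell^{-n}D_s^{-q})$ combined with Proposition \ref{Proposition: Unitaries Part 1} to collapse the factors $k\geq2$ handles all $j\neq i$, and the eight-letter word $D_jD_i^pD_j^{-1}D_i^nD_jD_i^{-p}D_j^{-1}D_i^{-n}$ evaluated via (\ref{Equation: *-freeness Computation 2}) (after first deducing $\phi_1(a_{1;j})=0$ from the already-settled case) handles $j=i$, exactly as in the paper. The only cosmetic difference is that you split the $j=i$ case on whether $\phi(D_j)=0$ rather than on whether some power $D_j^q$ has $\phi(D_j^q)\neq0$ and $|\phi_1(a_{1;j}^q)|\neq1$, but both splits are exhaustive and lead to the same two computations.
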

\begin{proof}
Let $j\in I\setminus\{i\}$ be arbitrary.
By using the same arguments as in the first paragraph of the proof of Proposition \ref{Proposition: Unitaries Part 1},
$\phi(D_i^p)\neq0$ and $|\phi_1(a_{1;i}^p)|\neq1$ imply that $$|\phi_1(a_{1;j}^n)|^2\big(1-|\phi_k(a_{k;j}^n)|^2\big)=0$$ for all $n\in\N$ and $k\in\{2,\ldots,K\}$.
Therefore,
if $\phi(D_j^n)=0$,
then $\phi_1(a_{1;j}^n)=0$.

It now only remains to prove that $\phi(D_i^n)=0$ implies that $\phi_1(a_{1;i}^n)=0$.
For this purpose,
fix $j\in I\setminus\{i\}$.
We divide the remainder of the proof in two cases,
which we know are exhaustive thanks to Proposition \ref{Proposition: Unitaries Part 1}:
\begin{enumerate}
\item there exists $q\in\N$ such that $\phi(D_j^q)\neq0$ and $|\phi_1(a_{1;j}^q)|^2\neq 1$; and
\item for every $n\in\N$,
$\phi(D_j^n)\neq0$ implies that $|\phi_k(a_{k;j}^n)|^2=1$ for all $k\in \{1, \ldots , K\}$.
\end{enumerate}

\noindent{\bf (1).}
Suppose that there exists $q\in\N$ such that $\phi(D_j^q)\neq0$ and $|\phi_1(a_{1;j}^q)|^2\neq 1$.
This case follows by repeating the first paragraph of this proof with $i$ and $j$ interchanged.

\noindent{\bf (2).}
Suppose that  for every $n\in\N$,
$\phi(D_j^n)\neq0$ implies that $|\phi_k(a_{k;j}^n)|^2=1$ for all $k\in \{1, \ldots , K\}$.
Notice that
it cannot be the case that $\phi(D_j)\neq0$,
as this would imply that $D_j$ is a constant multiple of $1$.
Thus, $\phi(D_j)=0$.

Suppose that $\phi(D_i^n)=0$.
According to equation (\ref{Equation: Method of Proof 1}),
$$\phi(D_jD_i^nD_j^{-1}D_i^pD_jD_i^{-n}D_j^{-1}D_i^{-p})=\prod_{k=1}^K\phi_k(a_{k,j}a_{k;i}^na_{k;j}^{-1}a_{k;i}^pa_{k;j}a_{k;i}^{-n}a_{k;j}^{-1}a_{k:i}^{-p}),$$
On the one hand,
since $\phi(D_j)=0$ and $|\phi_k(a_{k;i}^p)|^2=1$ for $k=2,\ldots,K$,
it follows from (\ref{Equation: *-freeness Computation 2}) that
\begin{multline*}
\phi(D_jD_i^nD_j^{-1}D_i^pD_jD_i^{-n}D_j^{-1}D_i^{-p})\\=|\phi(D_i^n)|^2+|\phi(D_i^p)|^2-|\phi(D_i^n)|^2|\phi(D_i^p)|^2=|\phi_1(a_{1;i}^p)|^2.
\end{multline*}
On the other hand,
for any $k=2,\ldots,K$,
the fact that $a_{k;i}^p$ is deterministic
implies that
$$\phi_k(a_{k,j}a_{k;i}^na_{k;j}^{-1}a_{k;i}^pa_{k;j}a_{k;i}^{-n}a_{k;j}^{-1}a_{k:i}^{-p})=\phi_k(a_{k,j}a_{k;i}^na_{k;j}^{-1}a_{k;j}a_{k;i}^{-n}a_{k;j}^{-1})=1,$$
and since $\phi(D_j)=0$ implies that $\phi_1(a_{1;j})=0$ (as shown in the first paragraph of the present proof),
it follows from (\ref{Equation: *-freeness Computation 2})
that
$$\phi_1(a_{1,j}a_{1;i}^na_{1;j}^{-1}a_{1;i}^pa_{1;j}a_{1;i}^{-n}a_{1;j}^{-1}a_{1:i}^{-p})=|\phi_1(a_{1;i}^n)|^2+|\phi_1(a_{1:i}^p)|^2-|\phi_1(a_{1;i}^n)|^2|\phi_1(a_{1;i}^p)|^2.$$
Therefore,
$$0=|\phi_1(a_{1;i}^n)|^2-|\phi_1(a_{1;i}^n)|^2|\phi_1(a_{1;i}^p)|^2=|\phi_1(a_{1;i}^n)|^2\big(1-|\phi_1(a_{1;i}^p)|^2\big),$$
from which we conclude that $\phi_1(a_{1;i}^n)=0$, as desired.
\end{proof}


\section{Discussion}\label{Section: Discussion}

The main contribution of this paper is to establish that,
in certain cases,
the TFC characterize the $*$-freeness of tensor products of the form
$\D=\diag(\a_1\otimes\cdots\otimes\a_K)$.
In light of the fact that the TFC do not characterize the freeness of $\D$ in general,
a first step towards better understanding the mechanisms that give rise to $*$-freeness in tensor products
could be to settle whether or not the TFC are necessary in cases other than those presented in
Proposition \ref{Proposition: Main Group} and Theorem \ref{Theorem: Main Theorem}.
For instance,
while Example \ref{Example: Tensor Freeness Conditions Are Not Necessary} shows
that the TFC need not hold for $\D$ to be $*$-free if none of the factor families $\a_k$ are $*$-free,
it is not clear if this is still true when some
but not all
of $\a_1,\ldots,\a_K$ are $*$-free.
The methods we use to prove Theorem \ref{Theorem: Main Theorem} (Section \ref{Section: General Strategy of Proof}) rely heavily
on the assumption that all of the families $\a_1,\ldots,\a_K$ are $*$-free,
hence it appears that a different approach is needed to solve this case.
However,
a more pressing example in the context of our present results is that of an apparent ``missing case" from Theorem \ref{Theorem: Main Theorem},
which makes our result fall short of a complete characterization of the $*$-freeness of $\D$ in the case where
$\a_k$ is $*$-free and $\phi_k$ is a faithful trace for all $k\in \{1, \ldots , K\}$.
We devote the remainder of this section to explaining what this missing case is,
why
it is plausible for it to be characterized by the TFC,
and why we believe that it cannot be settled with the methods used in the present paper.

\subsection{The Missing Case}

For the remainder of Section \ref{Section: Discussion},
we assume that the hypotheses of Theorem \ref{Theorem: Main Theorem} are met,
that is,
$\a_1,\ldots,\a_K$ are $*$-free,
$\phi_1,\ldots,\phi_K$ are faithful traces,
and $\D$ does not contain the zero vector or a constant multiple of the unit vector.

The first claim of Theorem \ref{Theorem: Main Theorem} is that if $\D$ is $*$-free,
then (assuming an appropriate renormalization,
see the first paragraph of Section \ref{Section: Main Result}):
(a) either all of $\a_1,\ldots,\a_K$ only contain unitary variables,
or 
(b)
exactly one collection $\a_k$ contains non-unitary variables.
The case where one family $\a_k$ contains non-unitary variables is fully accounted for:
the TFC hold with $\a_k$ as a dominating collection.
Thus,
the missing case is the one where $\a_1,\ldots,\a_k$ all contain unitary variables,
and the additional hypothesis in claim (3) of Theorem \ref{Theorem: Main Theorem} does not hold,
that is:

\hangindent=12pt
{\it The collections $\a_1,\ldots,\a_K$ only contain unitary variables,
and whenever $n\in\N$ and $i\in I$ are such that $\phi(D_i^n)\neq0$,
then $D_i^n$ is deterministic.}

If it also happens that whenever
$n\in\N$ and $i\in I$ are such that $\phi(D_i^n)=0$,
then $\phi_1(a_{1;i}^n)=\cdots =\phi_K(a_{K;i}^n)=0$,
then we are reduced to the case of group algebras with the canonical trace,
for which we have a full characterization in Proposition \ref{Proposition: Main Group}.
Thus,
solving the missing case in Theorem \ref{Theorem: Main Theorem} amounts to answering the following question.

\begin{question}\label{Question: Missing Case}
Suppose that the hypotheses of Theorem \ref{Theorem: Main Theorem} are met,
and that $\D$ is $*$-free.
If for every $n\in\N$ and $i\in I$,
$\phi(D_i^n)\neq0$,
implies that $D_i^n$ is deterministic;
and there exists $p\in\N$, $i\in I$ and $k\in \{1, \ldots , K\}$ such that
$\phi(D_i^p)=0$ and $\phi_k(a_{k;i}^p)\neq0$,
then does the TFC hold?
\end{question}

\subsection{The case $K=2$}

The most compelling evidence we have that the TFC might provide a characterization for the missing case is that
we can answer Question \ref{Question: Missing Case} in the affirmative when $K=2$.

\begin{lemma}\label{Lemma: Remaining Cases Lemma}
Suppose that there exists
$p\in\N$, $i\in I$ and $k\in \{1, \ldots , K\}$ such that $\phi(D_i^p)=0$ and $\phi(a_{k;i}^p)\neq0$.
For every $j\in I\setminus\{i\}$ and $n\in\N$,
if $\phi(D_j^n)=0$,
then there exists $l\neq k$ such that $\phi_l(a_{l;i}^p)=\phi(a_{l;j}^n)=0$.
\end{lemma}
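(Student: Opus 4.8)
The plan is to translate the entire statement into an assertion about the nonnegative real scalars
$x_l := |\phi_l(a_{l;i}^p)|^2$ and $y_l := |\phi_l(a_{l;j}^n)|^2$, $l \in \{1,\ldots,K\}$. Since every $a_{l;i}$ and $a_{l;j}$ is unitary, Proposition~\ref{Proposition: Cauchy-Schwarz} gives $0 \leq x_l, y_l \leq 1$. By the factorization (\ref{Equation: Method of Proof 1}) applied to $M(x)=x^p$ (resp. $M(x)=x^n$) at the single index $i$ (resp. $j$), one has $\phi(D_i^p)=\prod_l \phi_l(a_{l;i}^p)$ and $\phi(D_j^n)=\prod_l \phi_l(a_{l;j}^n)$, so the hypotheses $\phi(D_i^p)=0$ and $\phi(D_j^n)=0$ become $\prod_l x_l = 0$ and $\prod_l y_l = 0$, while $\phi_k(a_{k;i}^p)\neq 0$ becomes $x_k \neq 0$. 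In these terms the conclusion reduces to: there is some $l$ with $x_l = y_l = 0$; such an $l$ is automatically distinct from $k$, since $x_k \neq 0$.

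First I would bring in the $*$-freeness of $\D$, which is the only point at which the standing hypothesis is used. Applying (\ref{Equation: Method of Proof 3}) with $M(x)=x^p$ and $N(x)=x^n$, and using that all the variables are unitary (so that $\phi_l(M_{l;i}M_{l;i}^*)=\phi_l(N_{l;j}N_{l;j}^*)=1$), produces the identity
$$\prod_{l=1}^K x_l + \prod_{l=1}^K y_l - \prod_{l=1}^K x_l y_l = \prod_{l=1}^K (x_l + y_l - x_l y_l),$$
which is exactly (\ref{Equation: Main Theorem Step 3 eq2}). Because $\prod_l x_l = \prod_l y_l = 0$ and hence $\prod_l x_l y_l = \big(\prod_l x_l\big)\big(\prod_l y_l\big) = 0$, the left-hand side vanishes identically, forcing $\prod_{l=1}^K (x_l + y_l - x_l y_l) = 0$.

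To conclude, I would pick an index $l$ at which the corresponding factor vanishes, $x_l + y_l - x_l y_l = 0$, and rewrite this as $(1-x_l)(1-y_l) = 1$. Since $0 \leq x_l, y_l \leq 1$ forces $0 \leq 1-x_l, 1-y_l \leq 1$, a product of these two quantities can equal $1$ only if both equal $1$, i.e. $x_l = y_l = 0$, which is precisely $\phi_l(a_{l;i}^p) = \phi_l(a_{l;j}^n) = 0$. The one substantive step is the passage to the displayed multiplicative identity, where the $*$-freeness of $\D$ expands the relevant mixed moment through (\ref{Equation: *-freeness Computation 1}); everything else is the elementary factorization $x+y-xy = 1-(1-x)(1-y)$ together with the normalization bounds of Proposition~\ref{Proposition: Cauchy-Schwarz}, and the argument is uniform in $K$, so no additional difficulty arises for $K > 2$.
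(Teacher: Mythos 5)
Your proof is correct and follows essentially the same route as the paper: both invoke the $*$-freeness of $\D$ through the identity (\ref{Equation: Main Theorem Step 3 eq2}) (equivalently, the vanishing of $\phi(D_i^pD_j^nD_i^{-p}D_j^{-n})$), then observe that each factor $x+y-xy$ with $0\leq x,y\leq1$ vanishes only when $x=y=0$. The only cosmetic difference is your rewriting $x+y-xy=0$ as $(1-x)(1-y)=1$, which the paper states directly.
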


\begin{proof}
Since $\D$ is $*$-free,
if $\phi(D_j^n)=0$,
then
$\phi(D_i^p)=0$ implies that
$\phi(D_i^pD_j^nD_i^{-p}D_j^{-n})=0.$
Thus,
it follows from (\ref{Equation: Method of Proof 3}) that
$$0=\prod_{k=1}^K\big(|\phi_k(a_{k;i}^p)|+|\phi_k(a_{k;j}^n)|-|\phi_k(a_{k;i}^p)||\phi_k(a_{k;j}^n)|\big).$$
Given that $0\leq x,y\leq 1$ are such that $x+y-xy=0$ if and only if $x=y=0$,
we conclude that there must be an $l\neq k$ such that $\phi_l(a_{l;i}^p)=\phi(a_{l;j}^n)=0$.
\end{proof}

\begin{proof}[Proof of Question \ref{Question: Missing Case} for $K=2$]
Suppose without loss of generality that there exists $i\in I$ and $p\in\N$ such that
$\phi(a_{1;i}^p\otimes a_{2;i}^p)=0$ and $\phi_2(a_{2;i}^p)\neq0$.
Then, Lemma \ref{Lemma: Remaining Cases Lemma} implies that
for all $j\in I\setminus\{i\}$ and $n\in\N$,
if $\phi(a_{1;j}^n\otimes a_{2;j}^n)=0$,
then $\phi_1(a_{1;j}^n)=0$.
 
It only remains to prove that 
for all $n\in\N$,
if $\phi(D_i^n)=0$,
then $\phi_1(a_{1;i}^n)=0.$
For this purpose,
let us fix $j\in I\setminus\{i\}$.
We separate the proof in the following cases:
\begin{enumerate}
\item there exists $q\in\N$ such that $\phi(a_{1;j}^q\otimes a_{2;j}^q)=0$ and $\phi_2(a_{2;j}^q)\neq0$; and
\item for all $n\in\N$, $\phi(a_{1;i}^n\otimes a_{2;i}^n)=0$ implies that $\phi_1(a_{1;j}^n)=\phi_2(a_{2;j}^n)=0$.
\end{enumerate}

\noindent{\bf (1).}
This case follows from Lemma \ref{Lemma: Remaining Cases Lemma} by using the same arguments as in the first paragraph of this proof.

\noindent{\bf (2).}
In this case we consider the equality
$$\phi(D_i^nD_j^qD_i^pD_j^{-q}D_i^{-n}D_j^qD_i^{-p}D_j^{-q})=\prod_{k=1}^2\phi_k(a_{k;i}^na_{k;j}^qa_{k;i}^pa_{k;j}^{-q}a_{k;i}^{-n}a_{k;j}^qa_{k;i}^{-p}a_{k;j}^{-q}).$$
After an application of 
equation (\ref{Equation: *-freeness Computation 2}),
the result follows using the same argument as Lemma \ref{Lemma: Remaining Cases Lemma}.
\end{proof}

\subsection{The case $K\geq 3$}

We now argue that the argument used for $K=2$ above cannot work for the case $K\geq 3$ in general.
To illustrate this,
consider the following example:
Let $K\geq3$ be fixed.
Let $D_i=a_{1;i}\otimes \cdots\otimes a_{K;i}$ be unitary, of infinite order,
and suppose that the distributions of the $a_{k;i}$ are given by Table \ref{Table} below.
\begin{table}[htdp]
\caption{Counterexample}
$$\begin{array}{|c|cccc|}
\hline
n&\phi_1(a_{1;i}^n)&\phi_2(a_{2;i}^n)&\cdots&\phi_K(a_{K;i}^n)\\
\hline
\hline
1&\al&0&\cdots&0\\
2&0&\al&\cdots&0\\
\vdots&\vdots&\vdots&\ddots&\vdots\\
K&0&0&\cdots&\al\\
K+1&0&0&\cdots&0\\
\vdots&\vdots&\vdots&\vdots&\vdots\\
\hline
\end{array}$$
\label{Table}
\end{table}
If we ensure that $\phi_k(a_{k;i}^{-n})=\bar{\phi_k(a_{k;i}^n)}$ for all $n$ and we choose $0<|\al|$ small enough,
the $\phi_k$ are positive faithful traces when restricted to the $*$-algebras generated by the $a_{k;i}$.
Let $D_j=a_{1;j}\otimes \cdots\otimes a_{K;j}$ be such that $a_{1;j},\ldots,a_{K;j}$ are Haar unitary,
and such that $(a_{k;i},a_{k;j})$ is $*$-free for all $k\leq K$.
It is easy to see that this example is part of the missing case alluded to in Question \ref{Question: Missing Case}.

\begin{question}
Can we prove that $(D_i,D_j)$ is not $*$-free (as the opposite would violate the conjecture that the TFC are necessary in the missing case)?
\end{question}

The method we use to prove Theorem \ref{Theorem: Main Theorem} 
essentially relies on the following procedure:
\begin{enumerate}
\item Fix a $*$-word $M\in\C\langle x_i,x_i^*:i\in I\rangle$ such that,
if the $x_i$ are free,
we can easily compute $\phi\big(M(x_i:i\in I)\big)$ explicitly in terms of the distributions of the $x_i$
(for example,
through formulas such as (\ref{Equation: *-freeness Computation 1}) and (\ref{Equation: *-freeness Computation 2}));
\item use the identity
$$\phi\big(M(D_i:i\in I)\big)=\phi_1\big(M(a_{1;i}:i\in I)\big)\cdots\phi_K\big(M(a_{K;i}:i\in I)\big);$$
to obtain contradictions whenever the TFC are not satisfied.
\end{enumerate}
The problem with the present example is that there is no finite $*$-word that can rule out the $*$-freeness of $(D_i,D_j)$ for all values of $K$,
which suggests that the TFC do not characterize the missing case,
or that another method is needed.
We now substantiate this claim.

Let $M(x_i,x_j)$ be a $*$-word,
which we can write as
$$M(x_i,x_j)=x_i^{n_1}x_j^{n_2}\cdots x_i^{n_{2t-1}}x_j^{n_{2t}},$$
or
$$M(x_i,x_j)=x_i^{n_1}x_j^{n_2}\cdots x_i^{n_{2t-1}}$$
for some $n_1,\ldots,n_{2t}\in\N$.
Then,
to disprove the $*$-freeness of $(D_i,D_j)$ using this $*$-word,
we must prove that
$$\phi\big(M(D_i,D_j)\big)=\prod_{k=1}^K\phi\big(M(a_{k;i},a_{k;j})\big)\neq0.$$
According to the moment-cumulant formula
(see \cite{NicaSpeicher2006} Lectures 11 and 14 for a definition of free cumulants,
the moment-cumulant formula,
and the notation used in the remainder of this section),
one has
\begin{multline*}
\prod_{k=1}^K\phi\big(M(a_{k;i},a_{k;j})\big)=\prod_{k=1}^K\phi_k(a_{k;i}^{n_1}a_{k;j}^{n_2}\cdots a_{k;i}^{n_{2t-1}}a_{k;j}^{n_{2t}})\\
=\prod_{k=1}^K\sum_{\pi\in NC(2t)}\kappa_{\pi}[a_{k;i}^{n_1},a_{k;j}^{n_2},\cdots,a_{k;i}^{n_{2t-1}},a_{k;j}^{n_{2t}}].
\end{multline*}
If the partition $\pi$ has a block that does not contain only even or odd integers in $\{1,\ldots,2t\}$,
then the vanishing of mixed cumulants in $*$-free variables and the $*$-freeness of $(a_{k;i},a_{k;j})$ implies that
$\kappa_{\pi}[a_{k;i}^{n_1},a_{k;j}^{n_2},\cdots,a_{k;i}^{n_{2t-1}},a_{k;j}^{n_{2t}}]=0$.
Thus,
if we let $NC_{e,o}(2t)$ be the set of noncrossing partitions $\pi\in NC(2t)$ such that every block of $\pi$ contains even or odd elements only,
one has
\begin{align*}
\phi\big(M(D_i,D_j)\big)&=\prod_{k=1}^K\sum_{\pi\in NC_{e,o}(2t)}\kappa_{\pi}[a_{k;i}^{n_1},a_{k;j}^{n_2},\cdots,a_{k;i}^{n_{2t-1}},a_{k;j}^{n_{2t}}].
\end{align*}

If a noncrossing partition $\pi$ only contains even or odd elements,
then this necessarily means that $\pi$ contains at least one singleton.
If one of the singletons that $\pi$ contains is even,
then the fact that the $a_{k;j}$ are Haar unitary implies that
$$\kappa_{\pi}[a_{k;i}^{n_1},a_{k;j}^{n_2},\cdots,a_{k;i}^{n_{2t-1}},a_{k;j}^{n_{2t}}]=0.$$
Thus,
if we let $N_{o}=\{\pi\in NC_{e,o}(2t):\pi\text{ only has odd singletons}\}$,
one has
\begin{align*}
\phi\big(M(D_i,D_j)\big)&=\prod_{k=1}^K\sum_{\pi\in N_{o}}\kappa_{\pi}[a_{k;i}^{n_1},a_{k;j}^{n_2},\cdots,a_{k;i}^{n_{2t-1}},a_{k;j}^{n_{2t}}].
\end{align*}

Let $k\in \{1, \ldots , K\}$ be fixed.
If $\pi$ has an odd singleton $\{p\}$ such that $n_p\neq \pm k$,
then $\kappa_1[a_{k;i}^{n_p}]=\phi_k(a_{k;i}^{n_p})=0$ (see Table \ref{Table}) implies that 
$$\kappa_{\pi}[a_{k;i}^{n_1},a_{k;j}^{n_2},\cdots,a_{k;i}^{n_{2t-1}},a_{k;j}^{n_{2t}}]=0.$$
Thus,
if we let $N_o^{(k)}=\{\pi\in N_o:\text{if }\{p\}\in \pi,\text{ then }n_p=\pm k)\},$
one has
\begin{align*}
\phi\big(M(D_i,D_j)\big)&=\prod_{k=1}^K\sum_{\pi\in N_{o}^{(k)}}\kappa_{\pi}[a_{k;i}^{n_1},a_{k;j}^{n_2},\cdots,a_{k;i}^{n_{2t-1}},a_{k;j}^{n_{2t}}].
\end{align*}

We therefore conclude that the only way for
$\phi\big(M(D_i,D_j)\big)=0$
to possibly fail is if none of $N_{o}^{(k)}$ ($k\in \{1, \ldots , K\}$) are the empty set.
However,
notice that if
$N_{o}^{(k)}\neq\em$,
then there exists at least one odd $l$ such that $n_l=\pm k$,
and thus
if $N_{o}^{(k)}\neq\em$ for all $k$,
then $t\geq K$ (recall that $t$ is related to the size of the $*$-word $M(x_i,x_j)=x_i^{n_1}x_j^{n_2}\cdots x_i^{n_{2t-1}}x_j^{n_{2t}}$).
Therefore,
to get a contradiction for the $*$-freeness of $(D_i,D_j)$ (or even more general examples) using the methods in this paper for all $K$,
we would need to compute moments in $*$-words whose size grows to infinity with $K$,
which seems not to be feasible. 


\appendix
\section{Technical Results}\label{Appendix}

\begin{lemma}\label{Lemma: Polynomial Expansion 1}
Let $n\in\N$ be fixed,
and let $\al,x_1,\ldots,x_K$ be arbitrary complex
numbers.
Then,
\begin{align}\label{Equation: Polynomial Expansion 1}
\al\prod_{k=1}^K(x_k-1)=\al\prod_{k=1}^Kx_k-\al-\sum_{s=1}^{K-1}\left(\sum_{1\leq k(1)<\ldots<k(s)\leq K}\al\left(\prod_{l\leq s}(x_{k(l)}-1)\right)\right).
\end{align}
\end{lemma}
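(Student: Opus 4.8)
The plan is to prove the identity by the substitution $y_k := x_k-1$, which converts the claim into a routine statement about the elementary expansion of a product of binomials. First I would set $y_k := x_k-1$, so that $x_k = y_k+1$ and the left-hand side of (\ref{Equation: Polynomial Expansion 1}) becomes $\al\prod_{k=1}^K y_k$. The key step is then to expand the product appearing in the first term on the right-hand side,
$$\prod_{k=1}^K x_k = \prod_{k=1}^K(y_k+1) = \sum_{S\subseteq\{1,\ldots,K\}}\ \prod_{k\in S} y_k,$$
where the sum ranges over \emph{all} subsets $S$ of $\{1,\ldots,K\}$.

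Next I would isolate the two extreme subsets in this expansion. The empty subset $S=\em$ contributes the constant $1$, the full subset $S=\{1,\ldots,K\}$ contributes $\prod_{k=1}^K y_k$, and the remaining subsets are precisely those of cardinality $s$ with $1\le s\le K-1$. Grouping the latter by cardinality and reintroducing the ordered-index notation gives
$$\al\prod_{k=1}^K x_k = \al + \al\prod_{k=1}^K y_k + \al\sum_{s=1}^{K-1}\ \sum_{1\le k(1)<\cdots<k(s)\le K}\ \prod_{l\le s} y_{k(l)}.$$
Substituting back $y_k = x_k-1$ and solving this equation for $\al\prod_{k=1}^K(x_k-1)$ yields exactly (\ref{Equation: Polynomial Expansion 1}): the intermediate double sum produced by the binomial expansion is precisely the quantity subtracted in the statement, so it cancels, the empty-subset term accounts for the isolated summand $-\al$, and the full-subset term reproduces the left-hand side.

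I do not anticipate a genuine obstacle here, as the argument is purely combinatorial; the only care required is bookkeeping, namely matching the subsets of intermediate cardinality in the expansion of $\prod_{k=1}^K(y_k+1)$ with the double sum $\sum_{s=1}^{K-1}\sum_{1\le k(1)<\cdots<k(s)\le K}$ in the statement, and confirming that the $s=0$ and $s=K$ contributions correspond to the isolated terms $-\al$ and $\al\prod_{k=1}^K(x_k-1)$ respectively. An alternative would be a short induction on $K$ using $\prod_{k=1}^K(x_k-1) = (x_K-1)\prod_{k=1}^{K-1}(x_k-1)$, but the direct subset expansion is cleaner since it avoids reindexing the combinatorial sums at each inductive step.
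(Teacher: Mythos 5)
Your proof is correct, and it takes a genuinely different route from the paper. The paper proves Lemma \ref{Lemma: Polynomial Expansion 1} by induction on $K$: it multiplies the inductive identity by $(x_{K+1}-1)$, splits the resulting double sum according to whether the index $K+1$ appears in a given tuple $k(1)<\cdots<k(s)$, and then invokes the inductive hypothesis a second time to absorb the leftover term $-\bigl(\al\prod_{k=1}^K x_k-\al\bigr)$. Your argument instead substitutes $y_k=x_k-1$ and reads the identity off from the expansion $\prod_{k=1}^K(y_k+1)=\sum_{S\subseteq\{1,\ldots,K\}}\prod_{k\in S}y_k$, with the terms $S=\em$, $|S|=s$ for $1\le s\le K-1$, and $S=\{1,\ldots,K\}$ matching the three pieces of the claimed formula. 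This is shorter and makes the combinatorial content transparent: the double sum in the statement is nothing but the subsets of intermediate cardinality, whereas the paper's induction obscures this behind two rounds of reindexing. The only thing the inductive route buys is that it avoids any appeal to the multinomial-type expansion of a product of binomials, but that expansion is itself a one-line induction, so nothing essential is lost. Your bookkeeping of the $s=0$ and $s=K$ terms is exactly right, and the argument is complete as you have outlined it.
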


\begin{proof}
We proceed by induction.
For $K=1$ the result is trivial.
Suppose that equation (\ref{Equation: Polynomial Expansion 1}) holds for a fixed integer $K\in\N$.
Then
\begin{multline*}
\al\prod_{k=1}^{K+1}(x_k-1)
=\al\prod_{k=1}^K(x_k-1)(x_{K+1}-1)\\
=\left(\al\prod_{k=1}^Kx_k-\al\right)(x_{K+1}-1)
-\sum_{s=1}^{K-1}\left(\sum_{1\leq k(1)<\ldots<k(s)\leq K}\al\left(\prod_{l\leq s}(x_{k(l)}-1)\right)(x_{K+1}-1)\right).
\end{multline*}
On the one hand,
\begin{multline*}
\sum_{s=1}^{K-1}\left(\sum_{1\leq k(1)<\ldots<k(s)\leq K}\al\left(\prod_{l\leq s}(x_{k(l)}-1)(x_{K+1}-1)\right)\right)\\
=\sum_{s=2}^{K}\left(\sum_{1\leq k(1)<\ldots<k(s)=K+1}\al\left(\prod_{l\leq s}(x_{k(l)}-1)\right)\right).
\end{multline*}
On the other hand,
\begin{align*}
\left(\al\prod_{k=1}^Kx_k-\al\right)(x_{K+1}-1)&=\left(\al\prod_{k=1}^{K+1}x_k-\al\right)-\al(x_{K+1}-1)-\left(\al\prod_{k=1}^Kx_k-\al\right).
\end{align*}
Thus,
since
\begin{multline*}
\sum_{s=2}^{K}\left(\sum_{1\leq k(1)<\ldots<k(s)=K+1}\al\left(\prod_{l\leq s}(x_{k(l)}-1)\right)\right)+\al(x_{K+1}-1)\\
=\sum_{s=1}^{K}\left(\sum_{1\leq k(1)<\ldots<k(s)=K+1}\al\left(\prod_{l\leq s}(x_{k(l)}-1)\right)\right),
\end{multline*}
it follows that
\begin{multline}\label{Equation: Polynomial Expansion 1.1}
\al\prod_{k=1}^{K+1}(x_k-1)
=\left(\al\prod_{k=1}^{K+1}x_k-\al\right)-\left(\al\prod_{k=1}^Kx_k-\al\right)\\
-\sum_{s=1}^{K}\left(\sum_{1\leq k(1)<\ldots<k(s)=K+1}\al\left(\prod_{l\leq s}(x_{k(l)}-1)\right)\right).
\end{multline}
Given that equation (\ref{Equation: Polynomial Expansion 1}) holds for $K$,
we know that
\begin{multline*}
-\left(\al\prod_{k=1}^Kx_k-\al\right)=-\al\prod_{k=1}^K(x_k-1)-\sum_{s=1}^{K-1}\left(\sum_{1\leq k(1)<\ldots<k(s)\leq K}\al\left(\prod_{l\leq s}(x_{k(l)}-1)\right)\right)\\
=-\sum_{s=1}^{K}\left(\sum_{1\leq k(1)<\ldots<k(s)\leq K}\al\left(\prod_{l\leq s}(x_{k(l)}-1)\right)\right).
\end{multline*}
By combining the above with equation (\ref{Equation: Polynomial Expansion 1.1}),
we conclude that the result holds for $K+1$ as well.
\end{proof}

\begin{lemma}\label{Lemma: Polynomial Expansion 2}
Let $K\in\N$ be fixed,
and let $x_1,\ldots,x_K$ and $t_1,\ldots,t_K$ be arbitrary complex numbers.
Let us denote $\al=\prod_{k=1}^Kt_k$.
Then,
\begin{multline}\label{Equation: Polynomial Expansion 2}
\prod_{k=1}^K(t_kx_k+1-t_k)\\
=\left(\al\prod_{k=1}^Kx_k+1-\al\right)
+\sum_{s=1}^{K-1}\left(\sum_{1\leq k(1)<\ldots<k(s)\leq K}\left(\prod_{l\leq s}t_{k(l)}-\al\right)\left(\prod_{l\leq s}(x_{k(l)}-1)\right)\right).
\end{multline}
\end{lemma}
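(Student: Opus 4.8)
The plan is to expand the left-hand side directly, rewriting each factor as $t_kx_k+1-t_k = t_k(x_k-1)+1$, multiplying out over subsets, and then invoking Lemma \ref{Lemma: Polynomial Expansion 1} to handle the top-degree term. This avoids a fresh induction and reuses the identity just proved.

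First I would rewrite each factor and expand the product $\prod_{k=1}^K\big(t_k(x_k-1)+1\big)$ as a sum over subsets $S\subseteq\{1,\ldots,K\}$, namely
$$\prod_{k=1}^K\big(t_k(x_k-1)+1\big)=\sum_{S\subseteq\{1,\ldots,K\}}\Big(\prod_{k\in S}t_k\Big)\Big(\prod_{k\in S}(x_k-1)\Big).$$
Grouping the terms by the cardinality $s=|S|$ (and identifying each subset of size $s$ with an increasing tuple $k(1)<\cdots<k(s)$, matching the notation of the statement), this separates into the empty-set term, equal to $1$; the full-set term, equal to $\al\prod_{k=1}^K(x_k-1)$ since $\prod_{k=1}^K t_k=\al$; and the intermediate terms for $1\leq s\leq K-1$, whose total is $\sum_{s=1}^{K-1}\sum_{1\leq k(1)<\cdots<k(s)\leq K}\big(\prod_{l\leq s}t_{k(l)}\big)\big(\prod_{l\leq s}(x_{k(l)}-1)\big)$.

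Next I would apply Lemma \ref{Lemma: Polynomial Expansion 1} to the full-set term, replacing $\al\prod_{k=1}^K(x_k-1)$ by $\al\prod_{k=1}^K x_k-\al-\sum_{s=1}^{K-1}\sum_{1\leq k(1)<\cdots<k(s)\leq K}\al\big(\prod_{l\leq s}(x_{k(l)}-1)\big)$. Substituting this back and combining the two intermediate double sums term by term, the coefficient of each $\prod_{l\leq s}(x_{k(l)}-1)$ becomes $\prod_{l\leq s}t_{k(l)}-\al$, while the isolated terms assemble into $\al\prod_{k=1}^K x_k+1-\al$. This is precisely (\ref{Equation: Polynomial Expansion 2}).

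I expect the only real work to be bookkeeping: confirming that the subset expansion is correctly reindexed as the ordered-tuple sum appearing in the statement, and checking that the coefficient combination in the last step yields exactly $\prod_{l\leq s}t_{k(l)}-\al$ for each fixed tuple $k(1)<\cdots<k(s)$. There is no substantive obstacle, since the binomial-type expansion is exact and Lemma \ref{Lemma: Polynomial Expansion 1} supplies the single nontrivial rewriting needed.
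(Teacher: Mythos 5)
Your proposal is correct and follows essentially the same route as the paper: the paper's proof also rewrites each factor as $t_k(x_k-1)+1$, expands the product over subsets grouped by cardinality, and then invokes Lemma \ref{Lemma: Polynomial Expansion 1} to convert the full-set term $\al\prod_{k=1}^K(x_k-1)$. Your write-up merely spells out the final coefficient combination $\prod_{l\leq s}t_{k(l)}-\al$ that the paper leaves implicit.
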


\begin{proof}
First,
notice that
\begin{multline*}
\prod_{k=1}^K(t_kx_k+1-t_k)
=\prod_{k=1}^K\big(t_k(x_k-1)+1\big)\\
=1+\sum_{s=1}^{K-1}\left(\sum_{1\leq k(1)<\cdots<k(s)\leq K}\left(\prod_{l\leq s}t_{k(l)}(x_{k(l)}-1)\right)\right)+\al\prod_{k=1}^K(x_k-1).
\end{multline*}
The result then follows from Lemma \ref{Lemma: Polynomial Expansion 1}.
\end{proof}

\begin{proposition}\label{Proposition: Polynomial Expansion 3}
Let $K\in\N$ be fixed,
and let $1\leq x_1,\ldots,x_K$ and $0\leq t_1,\ldots,t_K\leq 1$ be real numbers.
Let us denote $\al=\prod_{k=1}^Kt_k$.
If 
$$\left(\al\prod_{k=1}^Kx_k+1-\al\right)=\prod_{k=1}^K(t_kx_k+1-t_k)$$
then,
$$\left(\prod_{l\leq s}t_{k(l)}-\al\right)\left(\prod_{l\leq s}(x_{k(l)}-1)\right)=0$$
for all $1\leq s\leq K-1$ and $1\leq k(1)\leq\cdots\leq k(s)\leq K$.
\end{proposition}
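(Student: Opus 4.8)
The plan is to reduce everything to the algebraic identity already established in Lemma \ref{Lemma: Polynomial Expansion 2} and then run a sign argument. First I would invoke (\ref{Equation: Polynomial Expansion 2}), which expands $\prod_{k=1}^K(t_kx_k+1-t_k)$ as the ``diagonal'' term $\al\prod_{k=1}^Kx_k+1-\al$ plus the double sum $\sum_{s=1}^{K-1}\sum_{1\leq k(1)<\cdots<k(s)\leq K}\left(\prod_{l\leq s}t_{k(l)}-\al\right)\left(\prod_{l\leq s}(x_{k(l)}-1)\right)$. The hypothesis of the proposition says precisely that the left-hand side equals the diagonal term, so this entire double sum must vanish.

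The key step is then to observe that \emph{every} summand is individually nonnegative, so that the vanishing of the total sum forces each term to be zero. For the factor $\prod_{l\leq s}(x_{k(l)}-1)$, nonnegativity is immediate, since each $x_{k(l)}\geq 1$ makes every factor nonnegative. For the factor $\prod_{l\leq s}t_{k(l)}-\al$, I would use that $\al=\prod_{k=1}^Kt_k$ is a product over all $K$ indices while $\prod_{l\leq s}t_{k(l)}$ runs over a proper subset of them; writing $\al=\left(\prod_{l\leq s}t_{k(l)}\right)\cdot\prod_{k\notin\{k(1),\ldots,k(s)\}}t_k$ and using $0\leq t_k\leq 1$ for the omitted factors shows that appending them can only shrink the product, whence $\al\leq\prod_{l\leq s}t_{k(l)}$ and the difference is nonnegative.

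With both factors nonnegative, each summand is a product of nonnegative reals and hence nonnegative, so a finite sum of nonnegative reals equal to zero forces every summand to vanish, which is exactly the claimed conclusion. I expect no genuine obstacle here: essentially all of the content is carried by Lemma \ref{Lemma: Polynomial Expansion 2}, and the only point demanding care is the sign bookkeeping, in particular noting that the inequality $\al\leq\prod_{l\leq s}t_{k(l)}$ relies on the full constraint $0\leq t_k\leq 1$ (not merely $t_k\geq 0$) and that $x_k\geq 1$ is what guarantees $x_{k(l)}-1\geq 0$.
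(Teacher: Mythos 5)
Your proof is correct and follows exactly the route the paper intends: the paper's own proof consists of the single sentence that the proposition is a direct consequence of Lemma \ref{Lemma: Polynomial Expansion 2}, and you have simply supplied the implicit sign argument (each summand in the double sum is a product of two nonnegative factors, so the vanishing of the sum forces each term to vanish). No discrepancy with the paper's approach.
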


\begin{proof}
This is a direct consequence of Lemma \ref{Lemma: Polynomial Expansion 2}.
\end{proof}

\begin{lemma}\label{Lemma: Polynomial Expansion 4}
Let $K\in\N$ and $x_1,\ldots,x_K,y_1,\ldots,y_K\geq1$.
Then,
\begin{align}\label{Equation: Polynomial Expansion 4}
\prod_{k=1}^Kx_k+\prod_{k=1}^Ky_k-1\leq \prod_{k=1}^K(x_k+y_k-1).
\end{align}
\end{lemma}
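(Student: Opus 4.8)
The plan is to reduce the inequality to a transparent nonnegativity statement by the substitution $a_k=x_k-1$ and $b_k=y_k-1$, which are nonnegative by hypothesis. Under this substitution each factor on the right becomes $x_k+y_k-1=1+a_k+b_k$, while $\prod_{k}x_k=\prod_{k}(1+a_k)$ and $\prod_{k}y_k=\prod_{k}(1+b_k)$. Thus (\ref{Equation: Polynomial Expansion 4}) is equivalent to
$$\prod_{k=1}^K(1+a_k)+\prod_{k=1}^K(1+b_k)-1\leq\prod_{k=1}^K(1+a_k+b_k),\qquad a_k,b_k\geq0.$$

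First I would expand the right-hand side. Expanding $\prod_{k=1}^K(1+a_k+b_k)$ and collecting terms according to which factors contribute an $a_k$ and which contribute a $b_k$, one obtains a sum over all ordered pairs $(S,T)$ of disjoint subsets of $\{1,\ldots,K\}$ of the monomials $\prod_{k\in S}a_k\prod_{k\in T}b_k$. Then I would locate the left-hand side inside this expansion: the terms with $T=\em$ sum to $\prod_{k=1}^K(1+a_k)$, the terms with $S=\em$ sum to $\prod_{k=1}^K(1+b_k)$, and the term $S=T=\em$ equals $1$ and is counted twice. Hence the left-hand side is exactly the portion of the expansion coming from pairs $(S,T)$ in which at least one of $S,T$ is empty, so that
$$\prod_{k=1}^K(1+a_k+b_k)-\Big(\prod_{k=1}^K(1+a_k)+\prod_{k=1}^K(1+b_k)-1\Big)=\sum_{\substack{S\cap T=\em\\ S,T\neq\em}}\prod_{k\in S}a_k\prod_{k\in T}b_k.$$
Every summand is a product of nonnegative reals, so the right side is nonnegative, which is precisely the desired inequality.

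The only step requiring care is the bookkeeping that matches each disjoint-subset monomial to the correct factor choice in the expansion; I expect no genuine obstacle, since once the identity above is in place the conclusion is immediate from $a_k,b_k\geq0$. An alternative and equally short route is induction on $K$: the base case $K=1$ is an equality, and in the inductive step one uses $x_{K+1}+y_{K+1}-1\geq1>0$ together with the inductive hypothesis to bound the right-hand side below by $(X+Y-1)(x_{K+1}+y_{K+1}-1)$, where $X=\prod_{k\leq K}x_k$ and $Y=\prod_{k\leq K}y_k$; the step then reduces to the elementary inequality $(X-1)(y_{K+1}-1)+(Y-1)(x_{K+1}-1)\geq0$, which holds because all four factors are nonnegative.
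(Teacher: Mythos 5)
Your proof is correct, and it takes a genuinely different and more direct route than the paper. The paper argues by induction on $K$ with separate cases for even and odd $K$: it pairs consecutive factors, expands each product $(x_k+y_k-1)(x_{k+1}+y_{k+1}-1)$ into three manifestly nonnegative pieces $z^{(1)},z^{(2)},z^{(3)}$, and applies the induction hypothesis to the product of the $z^{(3)}$ terms. Your main argument avoids induction entirely: after the shift $a_k=x_k-1$, $b_k=y_k-1\geq 0$, the multinomial expansion of $\prod_k(1+a_k+b_k)$ over ordered pairs of disjoint subsets $(S,T)$ identifies the left-hand side exactly with the terms where $S=\varnothing$ or $T=\varnothing$, so the deficit is a sum of nonnegative monomials. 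Your bookkeeping is right (the $S=T=\varnothing$ term is the one counted twice, accounting for the $-1$), and your alternative one-step induction via $(X-1)(y_{K+1}-1)+(Y-1)(x_{K+1}-1)\geq 0$ is also valid and avoids the parity split. One remark: the paper's heavier machinery is not gratuitous, since the explicit sum $S$ from its proof is reused in Proposition \ref{Proposition: Polynomial Expansion 5} to extract the equality case $(x_k-1)(y_l-1)=0$ for distinct $k,l$. That said, your identity delivers the same conclusion even more cleanly: equality forces every cross term with $S,T\neq\varnothing$ to vanish, in particular the terms $a_kb_l=(x_k-1)(y_l-1)$ for $k\neq l$, so your approach would serve the downstream application as well.
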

\begin{proof}
We proceed by induction.
For $K=1$,
the result is trivial.
Thus,
let $K\geq2$,
and suppose that the result holds for $1,2,\ldots,K-1$.
We consider the two cases regarding the parity of the integer $K$.

\noindent{\bf(1).}
Suppose that $K$ is even.
For every $k<K$,
notice that
\begin{align*}
&(x_k+y_k-1)(x_{k+1}+y_{k+1}-1)\\
&=x_kx_{k+1}+x_ky_{k+1}-x_k+y_kx_{k+1}+y_ky_{k+1}-y_k-x_{k+1}-y_{k+1}+1\\
&=(x_ky_{k+1}-x_k-y_{k+1}+1)+(y_kx_{k+1}-x_{k+1}-y_k+1)+(x_kx_{k+1}+y_ky_{k+1}-1)\\
&=(x_k-1)(y_{k+1}-1)+(y_k-1)(x_{k+1}-1)+(x_kx_{k+1}+y_ky_{k+1}-1).
\end{align*}
Therefore,
since $K$ is even,
we can write
\begin{multline}\label{Equation: S 1}
\prod_{k=1}^{K}(x_k+y_k-1)\\
=\prod_{k\leq K\text{ odd}}\big((x_k-1)(y_{k+1}-1)+(y_k-1)(x_{k+1}-1)+(x_kx_{k+1}+y_ky_{k+1}-1)\big).
\end{multline}
For every odd $k<K$,
let $z_{k,k+1}^{(1)}=(x_k-1)(y_{k+1}-1)$,
$z_{k,k+1}^{(2)}=(y_k-1)(x_{k+1}-1)$, and
$z_{k,k+1}^{(3)}=(x_kx_{k+1}+y_ky_{k+1}-1)$,
and let
\begin{align}\label{Equation: S 2}
S=\sum z_{1,2}^{(i_1)}z_{3,4}^{(i_3)}\cdots z_{K-1,K}^{(i_{K-1})},
\end{align}
where the sum $S$ is taken over all collections $i_1,i_3,\ldots,i_{K-1}\in\{1,2,3\}$,
except for the collection $i_1=i_3=\cdots=i_{K-1}=3$.
As $x_1,\ldots,x_K,y_1,\ldots,y_K\geq1$,
it follows that $z_{k,k+1}^{(1)},z_{k,k+1}^{(2)},z_{k,k+1}^{(3)}\geq0$ for every odd $k$.
Therefore,
$S\geq0$.
Furthermore,
given that (\ref{Equation: Polynomial Expansion 4}) holds for $1,\ldots,K-1$
(in particular $K/2$),
and that $(x_kx_{k+1}),(y_ky_{k+1})\geq1,$ for all $k$,
\begin{multline*}
z_{1,2}^{(3)}z_{3,4}^{(3)}\cdots z_{K-1,K}^{(3)}=\prod_{k\leq K\text{ odd}}(x_kx_{k+1}+y_ky_{k+1}-1)\\
\geq\prod_{k\leq K\text{ odd}}x_kx_{k+1}+\prod_{k\leq K\text{ odd}}y_ky_{k+1}-1=\prod_{k=1}^{K}x_k+\prod_{k=1}^{K}y_k-1.
\end{multline*}
Therefore,
\begin{align*}
\prod_{k=1}^{K}(x_k+y_k-1)=S+z_{1,2}^{(3)}z_{3,4}^{(3)}\cdots z_{K-1,K}^{(3)}\geq0+\prod_{k=1}^{K}x_k+\prod_{k=1}^{K}y_k-1,
\end{align*}
which implies that the result holds for $K$.

\noindent{\bf(2).}
Suppose that $K$ is odd,
i.e.,
$K-1$ is even.
Define the quantities $z_{k,k+1}^{(1)}$, $z_{k,k+1}^{(2)}$ and $z_{k,k+1}^{(3)}$
for odd $k\in \{1, \ldots , K-1\}$ as in the previous case.
Since $K-1$ is even,
we can write
$$\prod_{k=1}^{K-1}(x_k+y_k-1)=\prod_{k\leq K-1\text{ odd}}\big(z_{k,k+1}^{(1)}+z_{k,k+1}^{(2)}+z_{k,k+1}^{(3)}\big).$$
Define again the sum
$S=\sum z_{1,2}^{(i_1)}z_{3,4}^{(i_3)}\cdots z_{K-2,K-1}^{(i_{K-2})}\geq0$
as ranging over all collections $i_1,\ldots,i_{K-2}\in\{1,2,3\}$
except $i_1=\cdots=i_{K-2}=3$.
Then,
$$\prod_{k=1}^{K}(x_k+y_k-1)=\big(S+z_{1,2}^{(3)}z_{3,4}^{(3)}\cdots z_{K-2,K-1}^{(3)}\big)(x_K+y_K-1).$$
Given that (\ref{Equation: Polynomial Expansion 4}) holds for $(K-1)/2$,
it follows that
\begin{multline*}
z_{1,2}^{(3)}z_{3,4}^{(3)}\cdots z_{K-2,K-1}^{(3)}\\
\geq\prod_{k\leq K-1\text{ odd}}x_kx_{k+1}+\prod_{k\leq K-1\text{ odd}}y_ky_{k+1}-1=\prod_{k=1}^{K-1}x_k+\prod_{k=1}^{K-1}y_k-1.
\end{multline*}
Therefore,
\begin{multline*}
\prod_{k=1}^{K}(x_k+y_k-1)
\geq\left(S+\prod_{k=1}^{K-1}x_k+\prod_{k=1}^{K-1}y_k-1\right)(x_K+y_K-1)\\
=S(x_K+y_K-1)+\left(\prod_{k=1}^{K-1}x_k+\prod_{k=1}^{K-1}y_k-1\right)(x_K+y_K-1).
\end{multline*}
Given that (\ref{Equation: Polynomial Expansion 4}) also holds for $2$,
then
\begin{multline*}
\left(\prod_{k=1}^{K-1}x_k+\prod_{k=1}^{K-1}y_k-1\right)(x_K+y_K-1)\\\geq\left(\prod_{k=1}^{K-1}x_k\right)x_K+\left(\prod_{k=1}^{K-1}y_k\right)y_K-1
=\prod_{k=1}^{K}x_k+\prod_{k=1}^{K}y_k-1,
\end{multline*}
and thus,
since $S(x_K+y_K-1)\geq0$,
it follows that
$$\prod_{k=1}^{K}(x_k+y_k-1)\geq0+\prod_{k=1}^{K}x_k+\prod_{k=1}^{K}y_k-1,$$
as desired.
\end{proof}

\begin{proposition}\label{Proposition: Polynomial Expansion 5}
Let $K\in\N$ and $x_1,\ldots,x_K,y_1,\ldots,y_K\geq1$.
If
$$\prod_{k=1}^Kx_k+\prod_{k=1}^Ky_k-1=\prod_{k=1}^K(x_k+y_k-1),$$
then $(x_k-1)(y_l-1)=0$ for every distinct $k,l\leq K$.
\end{proposition}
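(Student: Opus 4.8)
The plan is to read the desired equality as the equality case of Lemma~\ref{Lemma: Polynomial Expansion 4} and to make the nonnegative ``defect'' between the two sides completely explicit through a single clean expansion. First I would substitute
\[
u_k=x_k-1\ge 0,\qquad v_k=y_k-1\ge 0\qquad(1\le k\le K),
\]
so that $x_k+y_k-1=1+u_k+v_k$, $\prod_k x_k=\prod_k(1+u_k)$, and $\prod_k y_k=\prod_k(1+v_k)$. The target conclusion $(x_k-1)(y_l-1)=0$ then reads $u_kv_l=0$ for distinct $k,l$.

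Next I would expand the right-hand side as
\[
\prod_{k=1}^K(1+u_k+v_k)=\sum_{\substack{A,B\subseteq\{1,\dots,K\}\\ A\cap B=\emptyset}}\Big(\prod_{k\in A}u_k\Big)\Big(\prod_{k\in B}v_k\Big),
\]
where, factor by factor, the index $k$ is placed in $A$ (resp.\ $B$) when $u_k$ (resp.\ $v_k$) is selected, and is omitted when the constant $1$ is selected. The left-hand side is exactly the subsum in which one of the two sets is empty: indeed $\prod_k(1+u_k)=\sum_A\prod_{k\in A}u_k$ collects the terms with $B=\emptyset$, $\prod_k(1+v_k)=\sum_B\prod_{k\in B}v_k$ collects the terms with $A=\emptyset$, and subtracting $1$ removes the doubly counted empty monomial $A=B=\emptyset$.

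Subtracting, I obtain the key identity
\[
\prod_{k=1}^K(x_k+y_k-1)-\Big(\prod_{k=1}^Kx_k+\prod_{k=1}^Ky_k-1\Big)=\sum_{\substack{A,B\neq\emptyset\\ A\cap B=\emptyset}}\Big(\prod_{k\in A}u_k\Big)\Big(\prod_{k\in B}v_k\Big),
\]
a sum of nonnegative terms (which already reproves Lemma~\ref{Lemma: Polynomial Expansion 4}). Under the hypothesis the left side vanishes, so every summand must be $0$; specializing to the singletons $A=\{k\}$ and $B=\{l\}$ with $k\neq l$ yields $u_kv_l=0$, i.e.\ $(x_k-1)(y_l-1)=0$, as required.

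I would expect no serious obstacle: the only point needing care is the bookkeeping that identifies the two scalar sides with the ``$A=\emptyset$ or $B=\emptyset$'' part of the disjoint-subset expansion, including the role of the $-1$. I prefer this direct expansion over extracting the equality case from the inductive proof of Lemma~\ref{Lemma: Polynomial Expansion 4}, since the auxiliary sum $S$ appearing there only pairs indices in a fixed grouping and would require permuting the variables and reapplying the argument to reach every pair $(k,l)$, whereas the expansion above delivers all pairs simultaneously.
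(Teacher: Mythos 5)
Your proof is correct, and it takes a genuinely different route from the paper's. The paper deduces the proposition from the inductive proof of Lemma \ref{Lemma: Polynomial Expansion 4}: there the factors are grouped into consecutive pairs $(x_k+y_k-1)(x_{k+1}+y_{k+1}-1)$, each pair product is split into the three nonnegative pieces $z^{(1)},z^{(2)},z^{(3)}$, and the defect between the two sides is bounded below by the auxiliary sum $S$ of (\ref{Equation: S 2}); the vanishing of $S$ only yields $(x_k-1)(y_{k+1}-1)=0$ for the indices paired in that fixed grouping, so the paper must then ``rearrange the index set'' (i.e.\ rerun the argument after permuting the variables) to reach an arbitrary pair $(k,l)$. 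Your substitution $u_k=x_k-1$, $v_k=y_k-1$ and the global expansion
\[
\prod_{k=1}^K(1+u_k+v_k)=\sum_{\substack{A,B\subseteq\{1,\dots,K\}\\ A\cap B=\emptyset}}\Bigl(\prod_{k\in A}u_k\Bigr)\Bigl(\prod_{k\in B}v_k\Bigr)
\]
makes the defect exactly equal to the subsum over nonempty disjoint $A,B$, which is manifestly a sum of nonnegative monomials; vanishing of the singleton terms $A=\{k\}$, $B=\{l\}$ gives all pairs at once, with no permutation step and no parity cases. As a bonus your identity reproves Lemma \ref{Lemma: Polynomial Expansion 4} in one line, whereas the paper's induction needs separate treatment of even and odd $K$. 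The one thing the paper's formulation buys is that the same pairing template is reused almost verbatim for Lemma \ref{Lemma: Polynomial Expansion 6} and Proposition \ref{Proposition: Polynomial Expansion 8}, but an analogous disjoint-subset expansion would work there too.
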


\begin{proof}
Let $S$ be defined as in (\ref{Equation: S 2}).
The hypothesis of the present proposition implies that $S=0$ if $K$ is even,
or $S(x_K+y_K-1)=0$ if $K$ is odd.
From this we directly infer that $(x_k-1)(y_{k+1}-1)=0$ for every odd $k\leq K-1$.
By rearranging the index set in (\ref{Equation: S 1}) as needed,
the conclusion follows.
\end{proof}

\begin{lemma}\label{Lemma: Polynomial Expansion 6}
Let $K\in\N$ and $0\leq x_1,\ldots,x_K,y_1,\ldots,y_K\leq1$.
Then,
$$\prod_{k=1}^Kx_k+\prod_{k=1}^Ky_k-\prod_{k=1}^Kx_ky_k\leq\prod_{k=1}^K(x_k+y_k-x_ky_k).$$
\end{lemma}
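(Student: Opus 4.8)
The plan is to reduce this inequality to the already-established Lemma \ref{Lemma: Polynomial Expansion 4} by a reciprocal substitution, after first disposing of the degenerate case in which one of the variables vanishes. The point is that all of the combinatorial (parity/induction) work needed here is already packaged inside Lemma \ref{Lemma: Polynomial Expansion 4}, so no new pairing argument should be required.

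First I would record the elementary factorwise estimate $x_k+y_k-x_ky_k\ge\max(x_k,y_k)\ge 0$, which holds because $x_k+y_k-x_ky_k-y_k=x_k(1-y_k)\ge 0$ and symmetrically $x_k+y_k-x_ky_k-x_k=y_k(1-x_k)\ge 0$. Using this I dispose of the boundary case: suppose some $x_{k_0}=0$ (the case $y_{k_0}=0$ is symmetric). Then $\prod_k x_k=\prod_k x_ky_k=0$, so the left-hand side collapses to $\prod_k y_k$, while the right-hand side is a product of nonnegative factors each at least the corresponding $y_k\ge 0$, hence $\ge\prod_k y_k$. This settles the inequality whenever any variable equals $0$.

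It then remains to treat $x_1,\dots,x_K,y_1,\dots,y_K\in(0,1]$, where all reciprocals $1/x_k,1/y_k$ lie in $[1,\infty)$. Applying Lemma \ref{Lemma: Polynomial Expansion 4} to these reciprocals gives
\[
\prod_{k=1}^K\frac1{x_k}+\prod_{k=1}^K\frac1{y_k}-1\le\prod_{k=1}^K\Big(\frac1{x_k}+\frac1{y_k}-1\Big).
\]
Multiplying both sides by the strictly positive quantity $\prod_{k=1}^K x_ky_k$ and simplifying factorwise — on the left via $\prod_k x_ky_k\cdot\prod_k\frac1{x_k}=\prod_k y_k$ (and the analogous identities for the other two terms), and on the right via $x_ky_k\big(\frac1{x_k}+\frac1{y_k}-1\big)=x_k+y_k-x_ky_k$ — converts the displayed inequality into exactly
\[
\prod_{k=1}^K x_k+\prod_{k=1}^K y_k-\prod_{k=1}^K x_ky_k\le\prod_{k=1}^K(x_k+y_k-x_ky_k),
\]
as claimed, since multiplying by a positive constant preserves the direction of the inequality.

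The only genuine subtlety, and hence the main thing to get right, is the boundary case handled first; one could alternatively pass from $(0,1]$ to $[0,1]$ by continuity, but the direct argument above is cleaner and avoids invoking limits. I would also note as a sanity check that the map $x_k\mapsto 1/x_k$, $y_k\mapsto 1/y_k$ is precisely the duality exchanging the regime $\ge 1$ governed by Lemma \ref{Lemma: Polynomial Expansion 4} with the regime $\le 1$ at hand, which explains why the two statements are formally equivalent and why no separate induction is needed here.
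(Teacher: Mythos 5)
Your proof is correct, and it takes a genuinely different route from the paper. The paper proves Lemma \ref{Lemma: Polynomial Expansion 6} by redoing the pairing argument of Lemma \ref{Lemma: Polynomial Expansion 4}: it expands each product of two consecutive factors as
$x_ky_{k+1}(1-x_{k+1})(1-y_k)+x_{k+1}y_k(1-x_k)(1-y_{k+1})+\bigl(x_kx_{k+1}+y_ky_{k+1}-(x_kx_{k+1})(y_ky_{k+1})\bigr)$,
collects the nonnegative cross terms into a sum $S$ as in (\ref{Equation: S 3}), and runs the same parity-based induction, now using $x_k,y_k\geq x_ky_k$ in place of $x_k,y_k\geq 1$. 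Your reciprocal substitution $x_k\mapsto 1/x_k$, $y_k\mapsto 1/y_k$ (after correctly disposing of the case where a variable vanishes, using $x_k+y_k-x_ky_k\geq\max(x_k,y_k)\geq 0$) transfers the statement wholesale from the regime $\geq 1$ to the regime $\leq 1$ and is shorter and arguably more illuminating --- it makes the formal duality between the two lemmas explicit and avoids repeating the induction. The one thing the paper's approach buys that yours does not is the explicit decomposition via the sum $S$ in (\ref{Equation: S 3}), which is reused verbatim in the proof of Proposition \ref{Proposition: Polynomial Expansion 8} to handle the equality case. If one adopted your proof, Proposition \ref{Proposition: Polynomial Expansion 8} would instead be deduced by transferring the equality analysis through the same substitution, using Proposition \ref{Proposition: Polynomial Expansion 5} and the identity $\bigl(\tfrac1{x_k}-1\bigr)\bigl(\tfrac1{y_l}-1\bigr)=\tfrac{(1-x_k)(1-y_l)}{x_ky_l}$; this works but requires separate care in the boundary case where some variable is zero.
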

\begin{proof}
For every $k<K$,
\begin{multline*}
(x_k+y_k-x_ky_k)(x_{k+1}+y_{k+1}-x_{k+1}y_{k+1})\\
=x_ky_{k+1}(1-x_{k+1})(1-y_k)
+x_{k+1}y_k(1-x_k)(1-y_{k+1})\\
+\big(x_kx_{k+1}+y_ky_{k+1}-(x_kx_{k+1})(y_ky_{k+1})\big).
\end{multline*}
For all $k$,
let $z_{k,k+1}^{(1)}=x_ky_{k+1}(1-x_{k+1})(1-y_k)$,
$z_{k,k+1}^{(2)}=x_{k+1}y_k(1-x_k)(1-y_{k+1})$,
and
$z_{k,k+1}^{(3)}=\big(x_kx_{k+1}+y_ky_{k+1}-(x_kx_{k+1})(y_ky_{k+1})\big),$
and let
\begin{align}\label{Equation: S 3}
S=\begin{cases}
\sum z_{1,2}^{(i_1)}z_{3,4}^{(i_3)}\cdots z_{K-1,K}^{(i_{K-1})}&\text{if $K$ is even}\\
\sum z_{1,2}^{(i_1)}z_{3,4}^{(i_3)}\cdots z_{K-2,K-1}^{(i_{K-2})}&\text{if $K$ is odd,}
\end{cases}
\end{align}
where the sum is over all $i_j\in\{1,2,3\}$ except $i_1=i_2=\cdots=3$.
By using the same arguments as in Lemma \ref{Lemma: Polynomial Expansion 4},
the fact that $x_k,y_k\geq x_ky_k$ for all $k\in \{1, \ldots , K\}$ implies the result.
\end{proof}

\begin{proposition}\label{Proposition: Polynomial Expansion 8}
Let $n\in\N$ and $0\leq x_1,\ldots,x_n,y_1,\ldots,y_n\leq1$.
Suppose that
$0<x_1,\ldots,x_n$,
or $0<y_1,\ldots,y_n$.
If
$$\prod_{k=1}^nx_k+\prod_{k=1}^ny_k-\prod_{k=1}^nx_ky_k=\prod_{k=1}^n(x_k+y_k-x_ky_k),$$
then for every distinct $k,l\leq K$
$$0=x_ky_l(1-x_l)(1-y_k).$$
\end{proposition}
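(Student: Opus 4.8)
The plan is to mirror the proof of Proposition \ref{Proposition: Polynomial Expansion 5}, replacing the role played there by Lemma \ref{Lemma: Polynomial Expansion 4} with that of Lemma \ref{Lemma: Polynomial Expansion 6} (and writing $K=n$ to reuse its notation). Recall the quantities $z_{k,k+1}^{(1)}$, $z_{k,k+1}^{(2)}$, $z_{k,k+1}^{(3)}$ and the sum $S$ from (\ref{Equation: S 3}) introduced in the proof of Lemma \ref{Lemma: Polynomial Expansion 6}. Expanding the product of the coupled sums $z^{(1)}+z^{(2)}+z^{(3)}$ gives the exact identity
$$\prod_{k=1}^K(x_k+y_k-x_ky_k)=S+\prod_{k\text{ odd}}z_{k,k+1}^{(3)}$$
when $K$ is even (with the analogous factored expression when $K$ is odd), while the recursive estimate in that proof shows $\prod_{k\text{ odd}}z_{k,k+1}^{(3)}\geq\prod_k x_k+\prod_k y_k-\prod_k x_ky_k$. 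Under the equality hypothesis these two facts force $S=0$; in the odd case one obtains instead $S\,(x_K+y_K-x_Ky_K)=0$, which again yields $S=0$ once the leftover factor is shown to be positive.

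Since $0\leq x_k,y_k\leq1$, every $z_{k,k+1}^{(j)}$ is nonnegative, so $S$ is a sum of products of nonnegative terms and $S=0$ forces each summand to vanish. To extract the individual relations I would isolate, for each coupled pair, the summand carrying value $1$ (resp.\ value $2$) at that pair and value $3$ elsewhere; its vanishing reads $z_{k,k+1}^{(1)}\prod_{m\neq k}z_{m,m+1}^{(3)}=0$, and similarly with $z^{(2)}$. Cancelling the nonzero factors then gives $z_{k,k+1}^{(1)}=x_ky_{k+1}(1-x_{k+1})(1-y_k)=0$ and $z_{k,k+1}^{(2)}=x_{k+1}y_k(1-x_k)(1-y_{k+1})=0$, i.e.\ the desired identity for the adjacent ordered pairs $(k,k+1)$ and $(k+1,k)$. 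Because the hypothesis and the extra positivity assumption are invariant under any simultaneous permutation of the indices of the $x_k$ and $y_k$, rearranging the index set exactly as in Proposition \ref{Proposition: Polynomial Expansion 5} upgrades this to $x_ky_l(1-x_l)(1-y_k)=0$ for all distinct $k,l$.

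The main obstacle, and the precise place where the extra hypothesis is used, is justifying that the factors $z_{m,m+1}^{(3)}$ (and the leftover factor $x_K+y_K-x_Ky_K$ when $K$ is odd) are strictly positive, so that the cancellation above is legitimate. This is where the situation genuinely differs from Proposition \ref{Proposition: Polynomial Expansion 5}, in which $z^{(3)}\geq1$ holds automatically. Writing $z_{m,m+1}^{(3)}=1-(1-x_mx_{m+1})(1-y_my_{m+1})$, one sees that it vanishes only when both $x_mx_{m+1}=0$ and $y_my_{m+1}=0$. The assumption that either $0<x_1,\ldots,x_n$ or $0<y_1,\ldots,y_n$ rules this out: if all $x_k>0$ then $x_mx_{m+1}>0$, and if all $y_k>0$ then $y_my_{m+1}>0$, so in either case $z_{m,m+1}^{(3)}>0$, and likewise $x_K+y_K-x_Ky_K>0$. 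This is exactly what makes the cancellation valid and completes the argument.
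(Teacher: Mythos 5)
Your proof is correct and follows essentially the same route as the paper's: it uses the decomposition of $\prod_k(x_k+y_k-x_ky_k)$ into $S$ plus the all-$z^{(3)}$ term from Lemma \ref{Lemma: Polynomial Expansion 6}, deduces $S=0$ from the equality hypothesis, and extracts the individual relations before permuting indices. In fact, your argument is more careful than the paper's terse version, since you make explicit exactly where the hypothesis ``$0<x_1,\ldots,x_n$ or $0<y_1,\ldots,y_n$'' is needed --- namely to guarantee $z^{(3)}_{m,m+1}=1-(1-x_mx_{m+1})(1-y_my_{m+1})>0$ so that these factors can legitimately be cancelled --- a point the paper only gestures at with the remark that $x+y-xy=0$ iff $x=y=0$.
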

\begin{proof}
The hypotheses of the present proposition implies that the sum $S$ defined in (\ref{Equation: S 3}),
is zero if $K$ is even,
or that $S(x_K+y_K-x_Ky_K)=0$ if $K$ is odd.
Given that $0\leq x,y\leq 1$,
we know that
$x+y-xy=0$ if and only if $x=y=0$.
This directly implies that $x_ky_{k+1}(1-x_k)(1-y_{k+1})=0$ for all odd $k\leq K-1$,
and result then follows by rearranging the index set as needed.
\end{proof}


\end{document}